\setlist[itemize]{leftmargin=*}
\lstdefinestyle{cust}{
language=python,
commentstyle=\ttfamily,
basicstyle=,
escapeinside={\%*}{*)},
frame=single,
keepspaces=true,
keywordstyle=\bfseries,
morekeywords={*,Input,Output},
}
\newtheorem{theorem}{Theorem}
\newtheorem{lem}[theorem]{Lemma}
\newtheorem{corollary}[theorem]{Corollary}
\newtheorem{defi}[theorem]{Definition}
\newtheorem{prop}[theorem]{Proposition}
\newtheorem*{lem*}{Lemma}
\newcommand{\cb}{\mathbf{c}}
\newcommand{\R}{\mathbb{R}}
\newcommand{\E}{\mathbb{E}}
\renewcommand{\1}{\mathds 1}
\newcommand{\B}{\mathcal{B}}
\newcommand{\Var}{\operatorname{Var}}
\begin{document}

\begin{frontmatter}
\title{Optimal quantization of the mean measure and applications to statistical learning}
\author{Fr\'ed\'eric Chazal, Cl\'ement Levrard, and Martin Royer}
\date{}
\maketitle
\runtitle{Learning with quantized mean measure}

%\begin{aug}
%\author[A]{ \fnms{Fr\'ed\'eric} \snm{Chazal} \ead[label=e1]{frederic.chazal@inria.fr}},
%\author[B]{\fnms{Cl\'ement} \snm{Levrard}\ead[label=e2]{clement.levrard@lpsm.paris}}
%and 
%\author[A]{\fnms{Martin} \snm{Royer}\ead[label=e3]{martin.royer@inria.fr}}
%
%\address[A]{DataShape, Inria Saclay,
%\printead{e1,e3}
%}
%\address[B]{LPSM, Universit\'e de Paris, 
%\printead{e2} 
%}
%\end{aug}
\begin{abstract}
This paper addresses the case where data come as point sets, or more generally as discrete measures. Our motivation is twofold: first we intend to approximate with a compactly supported measure the mean of the measure generating process, that coincides with the intensity measure in the point process framework, or with the expected persistence diagram in the framework of persistence-based topological data analysis. To this aim we provide two algorithms that we prove almost minimax optimal. 

Second we build from the estimator of the mean measure a vectorization map, that sends every measure into a finite-dimensional Euclidean space, and investigate its properties through a clustering-oriented lens. In a nutshell, we show that in a mixture of measure generating process, our technique yields a representation in $\mathbb{R}^k$, for $k \in \mathbb{N}^*$ that guarantees a good clustering of the data points with high probability. Interestingly, our results apply in the framework of persistence-based shape classification via the ATOL procedure described in \cite{Royer19}.  
     At last, we assess the effectiveness of our approach on simulated and real datasets, encompassing text classification and large-scale graph classification.
\end{abstract}
\end{frontmatter}

\section{Introduction}
This paper handles the case where we observe $n$ i.i.d measures $X_1, \hdots, X_n$ on $\R^d$, rather than $n$ i.i.d sample points, the latter case being the standard input of many machine learning algorithms. Such kind of observations naturally arise in many situations, for instance when data are spatial point patterns: in species distribution modeling \cite{Renner15}, repartition of clusters of diseases \cite{Diggle90}, modelisation of crime repartition \cite{Shirota17} to name a few. The framework of i.i.d sample measures also encompasses analysis of multi-channel time series, for instance in embankment dam anomaly detection from piezometers \cite{Jung15}, as well as topological data analysis,  where persistence diagrams naturally appear as discrete measures in $\R^2$ \cite{Chazal09,Chazal2016}. 

The objective of the paper is the following: we want to build from data an embedding of the sample measures into a finite-dimensional Euclidean space that preserves clusters structure, if any. Such an embedding, combined with standard clustering techniques should result in efficient measure clustering procedures. It is worth noting that clustering of measures has a wide range of possible applications: in the case where data come as a collection of finite point sets for instance, including ecology \cite{Renner15}, genetics \cite{Royer19,Adams17}, graphs clustering \cite{Carriere19,Tran18} and shapes clustering \cite{Chazal09}.

Our vectorization scheme is quite simple: for a $k$-points vector $\cb=(c_1, \hdots, c_k)$ in $(\R^d)^k$, we map each measure $X_i$ on $\R^d$ into a vector $v_i \in \R^k$ that roughly encodes how much weight $X_i$ spreads around every $c_j$. Then, in a mixture distribution framework, we provide general conditions on the $k$-points vectors that allow an efficient clustering based on the corresponding vectorization. Thus, efficiency of our measure clustering procedure essentially depends    on our ability to find discriminative $k$-points vectors from sample.

Choosing a fixed grid of $\R^d$ as vectorization points is possible, however the dimension of such a vectorization is constrained and can grow quite large. Another choice could be to draw vectorization points at random from the sample measures. Note however that such a method could miss discriminative areas. We choose an intermediate approach, by building a $k$-points approximation of a central measure of $X_1, \hdots, X_n$. Note that finding compact representations of central tendencies of measures is an interesting problem in itself, especially  when the sample measures are naturally organized around a central measure of interest, for instance in image analysis \cite{Cuturi13} or point processes modeling \cite{Renner15,Diggle90,Shirota17}. 

In \cite{Cuturi13}, the central measure is defined as the Wasserstein barycenter of the distribution of measures. Namely, if we assume that $X_1, \hdots, X_n$ are i.i.d measures on $\R^d$ drawn from $X$, where $X$ is a probability distribution on the space of measures, then the central measure is defined as $\mu_{W} = \arg\min_{\nu} \mathbb{E} \left ( W_2(X,\nu) \right )^2$, where $\nu$ ranges in the space of measures and $W_2$ denotes the Wasserstein distance. Note that this definition only makes sense in the case where $X(\R^d)$ is constant a.s., that is when we draw measures with the same total mass. Moreover, computing the Wasserstein barycenter of $X_1, \hdots, X_n$ in practice is too costly for large $n$'s, even with approximating algorithms \cite{Cuturi13,Rabin12}. To overcome these difficulties, we choose to define the central measure as the arithmetic mean of $X$, denoted by $\E(X)$, that assigns the weight $\E\left [X(A)\right ]$ to a borelian set $A$. In the point process theory, the mean measure is often referred to as the intensity function of the process. 

An easily computable estimator of this mean measure is the sample mean measure $\bar{X}_n = \left ( \sum_{i=1}^n X_i \right )/n$. We intend to build a $k$-points approximation of $\E(X)$, that is a distribution $P_{\cb}$ supported by $\cb=(c_1, \hdots, c_k)$ that approximates well $\E(X)$, based on $X_1, \hdots, X_n$. To this aim, we introduce two algorithms (batch and mini-batch) that extend classical quantization techniques intended to solve the $k$-means problem \cite{MacQueen67}. In fact, these algorithms are built to solve the $k$-means problem for $\bar{X}_n$. We prove in Section \ref{sec:Algorithms} that these algorithms provide minimax optimal estimators of a best possible $k$-points approximation of $\E(X)$, provided that $\E(X)$ satisfies some structural assumption. Interestingly, our results also prove optimality of the classical quantization techniques \cite{MacQueen67,Lloyd82} in the point sample case.

We assess the overall validity of our appproach by giving theoretical guarantees on our vectorization and clustering process in a topological data analysis framework, where measures are persistence diagrams from different shapes. In this case, vectorization via evaluation onto a fixed grid is the technique exposed in \cite{Adams17}, whereas our method has clear connections with the procedures described in \cite{Zielinski18,Royer19}.
Our theoretical results are given for vectorizations via evaluations of kernel functions around each point $c_j$, for a general class of kernel functions that encompasses the one used in \cite{Royer19}. Up to our knowledge, this provides the only theoretical guarantee on a measure-based clustering algorithm. 

At last, we perform numerical experiments in Section \ref{sec:Expes} to assess the effectiveness of our method on real and synthetic data, in a clustering and classification framework. In a nutshell, our vectorization scheme combined with standard algorithms provides state-of-the art performances on various classification and clustering problems, with a lighter computational cost. The classification problems encompass sentiment analysis of IMDB reviews \cite{imdb} as well as large-scale graph classification \cite{rozemberczki2020api,Yanardag15}. Surprisingly, our somehow coarse approach turns out to outperform more involved methods in several large-scale graph classification problems.

The paper is organized as follows: Section \ref{sec:measures_clustering_vectorization} introduces our general vectorization technique, and  conditions that guarantee a correct clustering based on it. Section \ref{sec:mean_measure_quantization} gives some detail on how our vectorization points are build, by introducing the mean measure $k$-points quantization problem. Then, two theoretically grounded algorithms are described to solve this problem from the sample $X_1, \hdots, X_n$. Section \ref{sec:appli_PD} investigates the special case where the measures are persistence diagrams built from samplings of different shapes, showing that all the previously exposed theoretical results apply in this framework. The numerical experiments are exposed in Section \ref{sec:Expes}.  Sections \ref{sec:proof_sec_clustering_based_on_vectorization} and \ref{sec:proof_sec_mean_measure_quantization} gather the main proofs of the results. Proofs of intermediate and technical results are deferred to  \Cref{sec:proof_tecsec}.  

\section{Vectorization and clustering of measures}\label{sec:measures_clustering_vectorization}
Throughout the paper we will consider finite measures on the $d$-dimensional ball $\B(0,R)$ of the Euclidean space $\R^d$, equipped with its Borel algebra. Let  $\mathcal{M}(R,M)$ denote the set of such measures of total mass smaller than $M$. For $\mu \in \mathcal{M}(R,M)$ and $f$ a borelian function from $\mathbb{R}^d$ to $\mathbb{R}$, we denote by $\int f(y) \mu(dy)$ integration of $f$ with respect to $\mu$, whenever $\int |f(y)| \mu(dy) $ is finite. We let $X$ be a random variable with values in $\mathcal{M}(R,M)$ (equipped with the smallest algebra such that $\int f dX$ is measurable, for any continuous and bounded $f$), and $X_1, \hdots, X_n$ be an i.i.d sample drawn from $X$. 

\subsection{Vectorization of measures}\label{sec:vectorization}

We aim to provide a map $v$ from $\mathcal{M}(R,M)$ into a finite-dimensional space that preserves separation between sources. That is, if we assume that there exists $(Z_1, \hdots, Z_n) \in [\![1,L]\!]^n$ a vector of (hidden) label variables such that $X_i \mid \{Z_i = \ell \} \sim X^{(\ell)}$, for different mixture components $X^{(1)}, \hdots, X^{(L)}$ taking values  in $\mathcal{M}(R,M)$, we want $\|v(X_{i_1}) - v(X_{i_2})\|$ to be large whenever $Z_{i_1} \neq Z_{i_2}$. 

    The intuition is the following. Let $\mu_1 \neq \mu_2$ be two different measures on $\B(0,R)$. Then there exists $x \in \B(0,R)$ and $r>0$ such that $\mu_1(\B(x,r)) \neq \mu_2(\B(x,r))$. Thus, the vectorization that sends $X_i$ into $X_i(\B(x,r))$ will allow to separate the sources $X^{(1)} = \delta_{\mu_1}$ and $X^{(2)} = \delta_{\mu_2}$. We extend this intuition to the multi-classes case. To adopt the quantization terminology, for $k \in \mathbb{N}^*$ we let $c_1, \hdots, c_k$ be \textit{codepoints}, and $\cb = (c_1, \hdots, c_k) \in (\R^d)^k$ is called a \textit{codebook}. For a given codebook $\cb$ and a scale $r>0$, we may represent a measure $\mu$ via the vector of weights $(\mu(\B(c_1,r)), \hdots,\mu(\B(c_k,r)))$ that encodes the mass that $\mu$ spreads around every pole $c_j$. Provided that the codepoints are discriminative (this will be discussed in the following section), separation between clusters of measure will be preserved. In practice, convolution with kernels is often preferred to local masses (see, e.g., \cite{Royer19}). To ease computation, we will restrict ourselves to the following class of kernel functions. 
\begin{defi}\label{def:kernel_functions}
For $(p,\delta) \in \mathbb{N}^*\times [0,1/2]$, a function $\psi: \mathbb{R}^+ \rightarrow \mathbb{R}^+$ is called a $(p,\delta)$-kernel function if
\[
\begin{array}{@{}clcll}
i)& \|\psi\|_\infty \leq 1, & \qquad & ii)& \sup_{|u| \leq 1/p} \psi(u) \geq 1- \delta, \\
iii) &  \sup_{|u| >2p} \psi(u) \leq \delta, & \qquad & iv)& \mbox{$\psi$ is $1$-Lipschitz}.
\end{array}
\]
\end{defi}
Note that a $(p,\delta)$-kernel is also a $(q,\delta)$-kernel, for $q >p$. This definition of a kernel function encompasses widely used kernels, such as Gaussian or Laplace kernels. In particular, the function $\psi(u) = exp(-u)$ that is used in \cite{Royer19} is a  $(p,1/p)$-kernel for $p \in \mathbb{N}^*$. The $1$-Lispchitz requirement is not necessary to prove that the representations of two separated measures will be well-separated. However, it is a key assumption to prove that the representations of two measures from the same cluster will remain close in $\R^k$. From a theoretical viewpoint, a convenient kernel is $\psi_0:x \mapsto (1- ((x-1) \vee 0)) \vee 0$, which is a $(1,0)$-kernel, thus a $(p,0)$-kernel for all $p \in \mathbb{N}^*$. 

From now on we assume that the kernel $\psi$ is fixed, and, for a $k$-points codebook $\cb$ and scale factor $\sigma$, consider the vectorization
\begin{align}\label{eq:vectorization}
v_{\cb,\sigma}: \left \{ \begin{array}{@{}ccc}
\mathcal{M}(R,M) & \rightarrow & [0,M]^k \\
\mu & \mapsto & \left ( \int \psi(\|u-c_1\|/\sigma) \mu(du), \hdots, \int \psi(\|u-c_k\|/\sigma) \mu(du) \right ).
\end{array}
\right .
\end{align}
Note that the dimension of the vectorization depends on the cardinality of the codebook $\cb$. To guarantee that such a vectorization is appropriate for a clustering purpose is the aim of the following section. 

\subsection{Discriminative codebooks and clustering of measures}\label{sec:measures_clustering}

In this section we assume that there exists a vector $(Z_1, \hdots, Z_n) \in [\![1,L]\!]^n$ of (hidden) label variables, and let $M_1, \hdots, M_L$ be such that, if $Z_i = \ell$, $X_i \in \mathcal{M}(R,M_\ell)$. We also denote $M = \max_{\ell \leq L} M_\ell$, and investigate under which conditions the vectorization exposed in the above section provides a representation that is suitable for clustering.    For a given codebook $\cb$, we introduce the following definition of $(p,r,\Delta)$-scattering to quantify how well $\cb$ will allow to separate clusters.

\begin{defi}\label{def:scatter}
Let $(p,r,\Delta) \in (\mathbb{N}^* \times \R^+ \times \R^+)$. A codebook $\cb \in \B(0,R)^k$ is said to $(p,r,\Delta)$ -\textbf{shatter} $X_1, \hdots, X_n$ if, for any $i_1, i_2$ $\in$ $[\![1,n]\!]$ such that $Z_{i_1} \neq Z_{i_2}$, there exists $j_{i_1,i_2} \in [\![1,k]\!]$ such that
\begin{align*}
X_{i_1}(\B(c_{j_{i_1,i_2}},r/p)) & \geq  X_{i_2}(\B(c_{j_{i_1,i_2}},4pr)) + \Delta, \quad \mbox{or} \\
X_{i_2}(\B(c_{j_{i_1,i_2}},r/p)) & \geq X_{i_1}(\B(c_{j_{i_1,i_2}},4pr)) + \Delta.
\end{align*}
\end{defi}
In a nutshell, the codebook $\cb$ shatters the sample if two different measures from two different clusters have different masses around one of the codepoint of $\cb$, at scale $r$. Note that, for any $i$, $j$, $X_{i}(\B(c_j,r/p)) \geq X_i(\{c_j\})$, so that a stronger definition of shattering in terms of $X_i(\{c_j\})$'s might be stated, in the particular case where $X_i(\{c_j\}) >0$. The following Proposition ensures that a codebook which shatters the sample yields a vectorization into separated clusters, provided the kernel decreases fast enough.
\begin{prop}\label{prop:vectorization_scattering}
Assume that $\cb \in \B(0,R)^k$ shatters $X_1, \hdots, X_n$, with parameters $(p,r,\Delta)$. Then, if $\Psi$ is a $(p,\delta)$-kernel, with $\delta \leq \frac{\Delta}{4M}$, we have, for all $i_1$, $i_2$ $\in$ $[\![1,n]\!]$ and $\sigma \in [r,2r]$, 
\begin{align*}
Z_{i_1} \neq Z_{i_2} \quad \Rightarrow \quad \|v_{\cb,\sigma}(X_{i_1}) - v_{\cb,\sigma}(X_{i_2})\|_\infty \geq \frac{\Delta}{2}.
\end{align*}
\end{prop}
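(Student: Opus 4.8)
The plan is to argue coordinate by coordinate. Fix $i_1, i_2 \in [\![1,n]\!]$ with $Z_{i_1} \neq Z_{i_2}$, and let $j = j_{i_1,i_2} \in [\![1,k]\!]$ be the codepoint index supplied by Definition \ref{def:scatter}; by symmetry I may assume the first alternative holds, namely $X_{i_1}(\B(c_j, r/p)) \geq X_{i_2}(\B(c_j, 4pr)) + \Delta$. It then suffices to show that the $j$-th coordinates alone satisfy $|(v_{\cb,\sigma}(X_{i_1}))_j - (v_{\cb,\sigma}(X_{i_2}))_j| \geq \Delta/2$, since this immediately yields the claimed bound in $\|\cdot\|_\infty$.

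First I would lower-bound $(v_{\cb,\sigma}(X_{i_1}))_j = \int \psi(\|u-c_j\|/\sigma)X_{i_1}(du)$ by restricting the integral to the ball $\B(c_j, r/p)$. For $u$ in this ball one has $\|u-c_j\|/\sigma \leq (r/p)/\sigma \leq 1/p$, using $\sigma \geq r$, so property $ii)$ of a $(p,\delta)$-kernel gives $\psi(\|u-c_j\|/\sigma) \geq 1-\delta$ there, whence $(v_{\cb,\sigma}(X_{i_1}))_j \geq (1-\delta)\, X_{i_1}(\B(c_j, r/p))$. Next I would upper-bound $(v_{\cb,\sigma}(X_{i_2}))_j$ by splitting $\B(0,R)$ into $\B(c_j, 4pr)$ and its complement: on $\B(c_j, 4pr)$ one uses $\psi \leq 1$ (property $i)$), while on the complement $\|u-c_j\|/\sigma > 4pr/\sigma \geq 2p$ because $\sigma \leq 2r$, so $\psi(\|u-c_j\|/\sigma) \leq \delta$ (property $iii)$); since the total mass of $X_{i_2}$ is at most $M$, this gives $(v_{\cb,\sigma}(X_{i_2}))_j \leq X_{i_2}(\B(c_j, 4pr)) + \delta M$.

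Combining the two estimates, and using $(1-\delta)X_{i_1}(\B(c_j, r/p)) \geq X_{i_1}(\B(c_j, r/p)) - \delta M$ (valid since $X_{i_1}(\B(c_j, r/p)) \leq M$) together with the shattering inequality, I obtain
\[
(v_{\cb,\sigma}(X_{i_1}))_j - (v_{\cb,\sigma}(X_{i_2}))_j \;\geq\; X_{i_1}(\B(c_j, r/p)) - X_{i_2}(\B(c_j, 4pr)) - 2\delta M \;\geq\; \Delta - 2\delta M \;\geq\; \frac{\Delta}{2},
\]
the last step being the hypothesis $\delta \leq \Delta/(4M)$. The case where the second alternative of Definition \ref{def:scatter} holds is identical after exchanging $i_1$ and $i_2$. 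There is no genuine obstacle; the only points demanding care are placing the two scale constraints correctly ($\sigma \geq r$ feeds the inner-ball estimate via property $ii)$, and $\sigma \leq 2r$ feeds the outer-ball estimate via property $iii)$), and controlling the mass lost to the kernel tail by the uniform bound $M$ rather than by a sample-dependent total mass, so that property $iv)$ (Lipschitzness) is not even needed here.
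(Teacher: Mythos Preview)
Your proof is correct and follows essentially the same approach as the paper's: restrict the integral for $X_{i_1}$ to $\B(c_j,r/p)$ using property $ii)$, split the integral for $X_{i_2}$ over $\B(c_j,4pr)$ and its complement using properties $i)$ and $iii)$, and combine with the shattering inequality and the bound $\delta \le \Delta/(4M)$. Your explicit tracking of how the two endpoints of $\sigma \in [r,2r]$ feed the inner and outer estimates, and the remark that property $iv)$ is not used, are nice clarifications but do not depart from the paper's argument.
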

A proof of Proposition \ref{prop:vectorization_scattering} can be found in Section \ref{sec:proof_prop_vectorization_scattering}. This proposition sheds some light on how $X_1, \hdots, X_n$ has to be shattered with respect to the parameters of $\Psi$. Indeed, assume that $\Delta=1$ (that is the case if the $X_i$'s are integer-valued measures, such as count processes for instance). Then, to separate clusters, one has to choose $\delta$ small enough compared to $1/M$, and thus $p$ large enough if $\Psi$ is non-increasing. Hence, the vectorization will work roughly if the support points of two different counting processes are $rp$-separated, for some scale $r$. This scale $r$ then drives the choice of the bandwidth $\sigma$. Section \ref{sec:appli_PD} below provides an instance of shattered measures in the case where measures are persistence diagrams originating from well separated shapes. If the requirements of Proposition \ref{prop:vectorization_scattering} are fulfilled, then a standard hierarchical clustering procedure such as Single Linkage with $L_\infty$ distance will separate the clusters for the scales smaller than $\Delta/2$. Note that, without more assumptions, this global clustering scheme can result in more than $L$ clusters, nested into the ground truth label classes. 

Now, to achieve a perfect clustering of the sample based on our vectorization scheme, we have to ensure that measures from the same cluster are not too far in terms of Wasserstein distance, implying in particular that they have the same total mass. This motivates the following definition. For $p\in [\![1,+\infty ]\!]$ and $\mu_1$, $\mu_2$ $\in \mathcal{M}(R,M)$ such that $\mu_1(\R^d) = \mu_2(\R^d)$, we denote by $W_p(\mu_1,\mu_2)$ the $p$-Wasserstein distance between $\mu_1$ and $\mu_2$.
\begin{defi}\label{def:contrated}
The sample of measures $X_1, \hdots X_n$ is called $w$-concentrated if, for all $i_1$, $i_2$ in $[\![1,n]\!]$ such that $Z_{i_1} = Z_{i_2}$,
\[
\begin{array}{@{}clccl}
i) & X_{i_1}(\mathbb{R}^d) = X_{i_2}(\mathbb{R}^d), & \qquad & ii) & W_1(X_{i_1},X_{i_2}) \leq w.
\end{array}
\]
\end{defi}
It now falls under the intuition that well-concentrated and shattered sample measures are likely to be represented in $\R^k$ by well-clusterable points. A precise statement is given by the following Proposition \ref{prop:well_clustered}.
\begin{prop}\label{prop:well_clustered}
Assume that $X_1, \hdots, X_n$ is $w$-concentrated. If $\Psi$ is $1$-Lipschitz, then, for all $\cb \in \B(0,R)^k$ and $\sigma >0$, for all $i_1$, $i_2$ in $[\![1,n]\!]$ such that $Z_{i_1} = Z_{i_2}$,
\begin{align*}
\| v_{\cb,\sigma}(X_{i_1}) - v_{\cb,\sigma}(X_{i_2}) \|_\infty \leq \frac{w}{\sigma}.
\end{align*}
Therefore, if $X_1, \hdots, X_n$ is $(p,r,\Delta)$-shattered by $\cb$, and $(r\Delta/4)$-concentrated, then, for any $(p,\delta)$-kernel satisfying $\delta \leq \frac{\Delta}{4M}$, we have, for $\sigma \in [r,2r]$,
\[
\begin{array}{@{}ccc}
Z_{i_1}=Z_{i_2} & \Rightarrow & \| v_{\cb,\sigma}(X_{i_1}) - v_{\cb,\sigma}(X_{i_2}) \|_\infty \leq \frac{\Delta}{4}, \\
Z_{i_1} \neq Z_{i_2} & \Rightarrow & \| v_{\cb,\sigma}(X_{i_1}) - v_{\cb,\sigma}(X_{i_2}) \|_\infty \geq \frac{\Delta}{2}.
\end{array}
\]
\end{prop}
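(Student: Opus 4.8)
The plan is to bound each coordinate of $v_{\cb,\sigma}(X_{i_1}) - v_{\cb,\sigma}(X_{i_2})$ by a one-dimensional Lipschitz / optimal-transport estimate, and then to obtain the second half of the statement by combining this bound with the already-proved Proposition \ref{prop:vectorization_scattering}.

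First I would fix a codepoint $c_j$ and note that the map $g_j : u \mapsto \psi(\|u-c_j\|/\sigma)$ is $(1/\sigma)$-Lipschitz on $\R^d$: it is the composition of the $1$-Lipschitz function $\psi$ with the $1$-Lipschitz map $u \mapsto \|u-c_j\|$, followed by multiplication by $1/\sigma$. The $j$-th coordinate of $v_{\cb,\sigma}(X_{i_1}) - v_{\cb,\sigma}(X_{i_2})$ is exactly $\int g_j \, dX_{i_1} - \int g_j \, dX_{i_2}$. Since $Z_{i_1} = Z_{i_2}$ and the sample is $w$-concentrated, part $i)$ of Definition \ref{def:contrated} gives $X_{i_1}(\R^d) = X_{i_2}(\R^d)$, so by Kantorovich--Rubinstein duality for finite measures of equal total mass,
\[
\left| \int g_j \, dX_{i_1} - \int g_j \, dX_{i_2} \right| \;\leq\; \frac{1}{\sigma}\, W_1(X_{i_1}, X_{i_2}) \;\leq\; \frac{w}{\sigma},
\]
the last inequality being part $ii)$ of Definition \ref{def:contrated}. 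Taking the maximum over $j \in [\![1,k]\!]$ yields the first displayed bound.

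For the second part, I would specialize $w = r\Delta/4$: when the sample is $(r\Delta/4)$-concentrated and $\sigma \geq r$, the first part gives $\|v_{\cb,\sigma}(X_{i_1}) - v_{\cb,\sigma}(X_{i_2})\|_\infty \leq w/\sigma \leq \Delta/4$ whenever $Z_{i_1} = Z_{i_2}$. The implication for $Z_{i_1} \neq Z_{i_2}$ is precisely Proposition \ref{prop:vectorization_scattering}, all of whose hypotheses — $(p,r,\Delta)$-shattering by $\cb$, $\Psi$ a $(p,\delta)$-kernel with $\delta \leq \Delta/(4M)$, and $\sigma \in [r,2r]$ — are assumed in the present statement.

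I do not expect a serious obstacle here; the only point requiring a little care is the use of Kantorovich--Rubinstein duality for measures that are not probability measures. This is handled by rescaling: if $X_{i_1}(\R^d) = X_{i_2}(\R^d) = m$, then $W_1(X_{i_1}, X_{i_2}) = m \, W_1(X_{i_1}/m, X_{i_2}/m)$, and the duality identity $W_1(\nu_1,\nu_2) = \sup\{\, |\int f \, d\nu_1 - \int f \, d\nu_2| : f \ \text{1-Lipschitz} \,\}$ applied to the probability measures $X_{i_1}/m$ and $X_{i_2}/m$ transfers to $X_{i_1}, X_{i_2}$ by linearity. Everything else reduces to the elementary Lipschitz bound on $g_j$ and to quoting Proposition \ref{prop:vectorization_scattering}.
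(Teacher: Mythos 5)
Your proof is correct and takes essentially the same approach as the paper: the paper bounds each coordinate by introducing a coupling $(Y_1,Y_2)$ of (the normalizations of) $X_{i_1}$ and $X_{i_2}$ and using the $1/\sigma$-Lipschitz property of $u \mapsto \Psi(\|u-c_j\|/\sigma)$, which is exactly the primal side of the Kantorovich--Rubinstein inequality you invoke. The second half is obtained identically in both arguments, by specializing the first bound to $w = r\Delta/4$ with $\sigma \geq r$ and quoting Proposition \ref{prop:vectorization_scattering} for the separated case.
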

A proof of Proposition \ref{prop:well_clustered} is given in Section \ref{sec:proof_prop_well_clustered}. An immediate consequence of Proposition \ref{prop:well_clustered} is that $(p,r,\Delta)$-shattered and $r\Delta/4$-concentrated sample measures can be vectorized in $\R^k$ into a point cloud that is structured in $L$ clusters. These clusters can be exactly recovered via Single Linkage clustering, with stopping parameter in $]\Delta/4, \Delta/2]$. In practice, tuning the parameter $\sigma$ is crucial. Some heuristic is proposed in \cite{Royer19} in the special case of i.i.d persistence diagrams. An alternative calibration strategy is proposed in Section \ref{sec:appli_PD}. Constructing shattering codebooks from sample is the purpose of the following section. 
          
\section{Quantization of the mean measure}\label{sec:mean_measure_quantization} 

To find discriminative codepoints from a measure sample $X_1, \hdots, X_n$, a naive approach could be to choose a $\varepsilon$-grid of $\B(0,R)$, for $\varepsilon$ small enough. Such an approach ignores the sample, and results in quite large codebooks, with $k \sim (R/\varepsilon)^{d}$ codepoints. An opposite approach is to sample $k_i$ points from every $X_i$, with $k=\sum_{i=1}^n k_i$, resulting in a $k$-points codebook that might miss some discriminative areas, especially in the case where discriminative areas have small $X_i$-masses. We propose an intermediate procedure, based on quantization of the mean measure. Definition \ref{def:mean_measure} below introduces the mean measure.  
\begin{defi}\label{def:mean_measure} Let $\mathcal{B}(\R^d)$ denote the borelian sets of $\R^d$. The \textit{mean measure} $\mathbb{E}(X)$ is defined as the  measure such that 
\[
\forall A \in \mathcal{B}(\R^d) \quad  \E(X)(A) = \mathbb{E} \left( X(A) \right ).
\]
The \textit{empirical mean measure} $\bar{X}_n$ may be defined via  $\bar{X}_n(A) = \frac{1}{n} \sum_{i=1}^n X_i(A)$.
\end{defi}
In the case where the measures of interest are persistence diagrams, the mean measure defined above is  the expected persistence diagram, defined in \cite{ChazalDivol18}. If the sample measures are point processes, $\E(X)$ is the intensity function of the process. It is straightforward that, if $\mathbb{P} \left ( X \in \mathcal{M}(R,M) \right )=1$, then both $\E(X)$ and $\bar{X}_n$ are (almost surely) elements of $\mathcal{M}(R,M)$. 

For a codebook $\cb = (c_1, \hdots, c_k) \in \mathcal{B}(0,R)^k$, we let 
\begin{align*}
W_j(\cb)& = \{ x \in \mathbb{R}^d \mid  \forall i < j \quad \| x- c_j\| < \| x-c_i\| \quad \mbox{and} \quad \forall{i >j} \quad \| x- c_j\| \leq \|x-c_i\| \}, \\
N(\cb) & = \{x \mid \exists i < j \quad x \in W_i(\cb) \quad \mbox{and} \quad \|x-c_i\| = \|x-c_j\| \}, 
\end{align*}
so that $(W_1(\cb), \hdots, W_k(\cb))$ forms a partition of $\R^d$ and $N(\cb)$ represents the skeleton of the Voronoi diagram associated with $\cb$. To grid the support of $\E(X)$, we want to minimize the distortion $F$, that is 
\begin{align*}
F(\cb) = \int  \min_{j=1, \hdots, k} \|u-c_j\|^2 \E(X)(du) = W_2^2 (\E(X),P_\cb),
\end{align*}
where $P_\cb = \sum_{j=1}^k \E(X)(W_j(\cb)) \delta_{c_j}$ is a measure supported by $\cb$. It is worth noting that if $P'_k = \sum_{j=1}^k \mu_j \delta_{c_j}$, with $\sum_{j=1}^{k} \mu_j = \E(X)(\R^d)$, then $W_2^2(\E(X),P'_k)\geq W_2^2 (\E(X),P_\cb)$, so that $P_\cb$ is the best approximation of $\E(X)$ with support $\cb$ in terms of $W_2$ distance.

According to \cite[Corollary 3.1]{Fischer10}, since $\E(X) \in \mathcal{M}(R,M)$, there exist minimizers $\cb^*$ of $F(\cb)$, and we let $\mathcal{C}_{opt}$ denote the set of such minimizers. As well, if $\mathcal{M}_k(R,M)$ denotes the subset of $\mathcal{M}(R,M)$ that consists of distributions supported by $k$ points and $\cb^* \in \mathcal{C}_{opt}$, then $P_{\cb^*}$ is a best approximation of $\E(X)$ in $\mathcal{M}_k(R,M)$ in terms of $W_2$ distance. Note that distortions and thus optimal codebooks may be defined for $W_p$, $p\neq 2$. However, simple algorithms are available to approximate such a $\cb^*$ in the special case $p=2$, as discussed below.

\subsection{Batch and mini-batch algorithms}\label{sec:Algorithms}

This section exposes two algorithms that are intended to approximate a best $k$-points codebook for $\E(X)$ from a measure sample $X_1, \hdots, X_n$. These algorithms are extensions of two well-known clustering algorithms, namely the Lloyd algorithm (\cite{Lloyd82}) and Mac Queen algorithm (\cite{MacQueen67}). We first introduce the counterpart of Lloyd algorithm.

\bigskip

\begin{lstlisting}[frame=single,caption={Batch algorithm (Lloyd)},label={lst:lloyd},abovecaptionskip=-\medskipamount]
Input:%*$X_1, \hdots, X_n$ and $k$ *);
# Initialization
%* Sample $c^{(0)}_1$, $c^{(0)}_2$,\ldots $c^{(0)}_k$ from $\bar{X}_n$.*);
while %*$\cb^{(t+1)} \neq \cb^{(t)}$*) do:
  # Centroid update.
  for j in 1..%*$k$*):
    %*$c^{(t+1)}_j= \frac{1}{\bar{X}_n(W_j(\cb^{(t)}))}  \int u \1_{W_j(\cb^{(t)})}(u) \bar{X}_n(du) $*);
Output:%*$\cb^{(T)}$ (codebook of the last iteration) *).
\end{lstlisting}

~
Note that Algorithm \ref{lst:lloyd} is a batch algorithm, in the sense that every iteration needs to process the whole data set $X_1, \hdots, X_n$. Fortunately, Theorem \ref{thm:optLloyd} below ensures that a limited number of iterations are required for Algorithm \ref{lst:lloyd} to provide an almost optimal solution. In the sample point case, that is when we observe $n$ i.i.d points $X^{(1)}_i$, Algorithm \ref{lst:lloyd} is the usual Lloyd's algorithm. In this case, the mean measure $\mathbb{E}(X)$ is the distribution of $X_1^{(1)}$, that is the usual sampling distribution of the $n$ i.i.d points. 
As well, the counterpart of Mac-Queen algorithm \cite{MacQueen67} for standard $k$-means clustering is the following mini-batch algorithm. We let $\pi_{\B(0,R)^k}$ denote the projection onto $\B(0,R)^k$. 

\medskip

\begin{lstlisting}[frame=single,caption={Mini-batch algorithm (Mac-Queen)},label={lst:macqueen},abovecaptionskip=-\medskipamount]
Input:%*$X_1, \hdots, X_n$, divided into mini-batches $(B_1, \hdots, B_{T})$ of sizes $(n_1, \hdots, n_T)$, and $k$. For $j = 1, \hdots, T$, $B_j$ is divided in two halves, $B_j^{(1)}$ and $B_j^{(2)}$. Maximal radius R  *);
# Initialization
%* Sample $c^{(0)}_1$, $c^{(0)}_2$,\ldots $c^{(0)}_k$ from $\bar{X}_n$.*);
for %*$j=0, \hdots, T-1$*) do:
  # Centroid update.
  for j in 1..%*$k$*):
    %*$c^{(t+1)}_j= \pi_{\B(0,R)^k} \left ( c_j^{(t)} - \frac{  \int  (c_j^{(t)} - u) \1_{W_j(\cb^{(t)})}(u) \bar{X}_{B^{(2)}_{t+1}}(du) }{(t+1)\bar{X}_{B_{t+1}^{(1)}}  \left ( W_j(\cb^{(t)})  \right )}  \right )$*);
Output:%*$\cb^{(T)}$ (codebook of the last iteration) *).
\end{lstlisting}

Note that the mini-batches split in two halves is motivated by technical considerations only that are exposed in Section \ref{sec:proof_thm_opt_Mc_Queen}.  
Whenever $n_i=1$ for $i=1, \hdots, n$ (and $B_i^{(1)} = B_i^{(2)} = \{X_i\}$), Algorithm \ref{lst:macqueen} is a slight modification of the original Mac-Queen algorithm \cite{MacQueen67}. Indeed, the Mac-Queen algorithm takes mini-batches of size $1$, and estimates the population of the cell $j$ at the $t$-th iteration via $\sum_{\ell=1}^t \hat{p}^{(\ell)}_j$ instead of $t \hat{p}_j^{(t)}$, where $\hat{p}_j^{(t)} = \bar{X}_{B^{(1)}_t}  \left ( W_j(\cb^{(t)})  \right )$. These modifications are motivated by Theorem \ref{thm:opt_Mc_Queen}, that guarantees near-optimality of the output of Algorithm \ref{lst:macqueen}, provided that the mini-batches are large enough. 

\subsection{Theoretical guarantees}

We now investigate the distortions of the outputs of Algorithm \ref{lst:lloyd} and \ref{lst:macqueen}.  In what follows, $F^*$ will denote the optimal distortion achievable with $k$ points, that is $F^* = F(\cb)$, where $\cb \in \mathcal{C}_{opt}$. Note that $F^*=0$ if and only if $\E(X)$ is supported by less than $k$ points, in which case the quantization problem is trivial. In what follows we assume that $\E(X)$ is supported by more than $k$ points. Basic properties of $\mathcal{C}_{opt}$ are recalled below.
\begin{prop}{\cite[Proposition 1]{Levrard18}}\label{prop:B_p_min} Recall that $\mathbb{E}(X) \in \mathcal{M}(R,M)$. Then,
\begin{enumerate}
\item $ B = \inf_{\cb^* \in \mathcal{C}_{opt}, j \neq i}{\|c_i^* - c_j^*\|} >0$,
\item $p_{min} = \inf_{\cb^* \in \mathcal{C}_{opt}, j=1, \hdots, k}{\E(X) \left ( W_j(\cb^*) \right )} >0$.
\end{enumerate}
\end{prop}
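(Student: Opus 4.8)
Since this is cited as \cite[Proposition 1]{Levrard18}, a full self-contained proof is not strictly needed, but here is how I would establish both assertions. The plan rests on two classical facts about optimal quantizers for a compactly supported measure with more than $k$ points in its support: (a) the set $\mathcal{C}_{opt}$ is compact (as a subset of $\B(0,R)^k$), and (b) for any optimal codebook $\cb^* \in \mathcal{C}_{opt}$, the codepoints are pairwise distinct, each Voronoi cell $W_j(\cb^*)$ carries positive mass, and $c_j^*$ is the conditional mean (centroid) of $\E(X)$ restricted to $W_j(\cb^*)$.

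For fact (b): suppose toward a contradiction that $c_i^* = c_j^*$ for some $i \neq j$, or that $\E(X)(W_j(\cb^*)) = 0$ for some $j$. In either case one of the $k$ codepoints is ``wasted'': the measure $P_{\cb^*}$ is supported by at most $k-1$ points. Since $\E(X)$ is supported by strictly more than $k$ points, there is at least one cell of positive mass that is not a single atom of $\E(X)$; splitting that cell with the freed codepoint strictly decreases the distortion $F$ (this is the standard ``a $k$-means optimum uses all its centers'' argument — one places the extra center at a point of the cell distinct from the current centroid and recomputes centroids, strictly lowering $W_2^2$). This contradicts optimality of $\cb^*$. Hence every $\cb^* \in \mathcal{C}_{opt}$ has distinct codepoints and all Voronoi cells of positive mass, so for each fixed $\cb^*$ the quantities $\min_{i \neq j}\|c_i^* - c_j^*\|$ and $\min_j \E(X)(W_j(\cb^*))$ are strictly positive.

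To pass from ``positive for each $\cb^*$'' to ``the infimum over all of $\mathcal{C}_{opt}$ is positive'', I would use a compactness argument. First, $\mathcal{C}_{opt} \subseteq \B(0,R)^k$ is closed: $F$ is continuous on $\B(0,R)^k$ (it is a $\min$ of finitely many continuous maps $\cb \mapsto \int \|u - c_j\|^2 \E(X)(du)$, integrated against a finite measure), and $\mathcal{C}_{opt} = F^{-1}(\{F^*\})$, so it is compact. Next, the map $\cb^* \mapsto \min_{i\neq j}\|c_i^* - c_j^*\|$ is continuous on $\B(0,R)^k$, so it attains its minimum on the compact set $\mathcal{C}_{opt}$; by fact (b) that minimum is positive, giving $B > 0$. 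For $p_{min}$ the same scheme applies, but the subtlety — and the main obstacle — is that $\cb^* \mapsto \E(X)(W_j(\cb^*))$ need not be continuous, because Voronoi cell boundaries can capture or shed mass under perturbation (this is exactly why the definition of $W_j(\cb)$ fixes a tie-breaking rule via the skeleton $N(\cb)$). I would handle this by working with the lower semicontinuous envelope: one shows $\cb \mapsto \E(X)(\mathring{W}_j(\cb))$, the mass of the open cell, is lower semicontinuous, and that on $\mathcal{C}_{opt}$ the boundary $\partial W_j(\cb^*)$ is $\E(X)$-null (a consequence of the centroid condition together with $\E(X)$ having no mass on the hyperplanes — or, if $\E(X)$ can have atoms, a more careful argument ruling out atoms on cell boundaries of optimal codebooks). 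A lower semicontinuous positive function on a compact set has positive infimum, yielding $p_{min} > 0$. The delicate point to get right is precisely this semicontinuity / boundary-null step; everything else is routine compactness and the standard ``optimum uses all centers'' lemma.
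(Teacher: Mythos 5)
The paper offers no proof of this proposition: it is quoted as \cite[Proposition 1]{Levrard18}, so there is nothing in-paper to compare against. Your plan is the standard argument for this kind of statement and its skeleton is sound: the ``an optimal quantizer uses all $k$ centers'' lemma gives strict positivity for each fixed $\cb^*$, compactness of $\mathcal{C}_{opt}=F^{-1}(\{F^*\})$ inside $\B(0,R)^k$ follows from continuity of $F$, continuity of $\cb\mapsto\min_{i\neq j}\|c_i-c_j\|$ then yields $B>0$, and lower semicontinuity of the open-cell masses (via Fatou, since $x\in\mathring{W}_j(\cb)$ implies $x\in\mathring{W}_j(\cb_n)$ for $n$ large) is the right tool for $p_{min}>0$.

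The step you flag but leave open is, however, the one that actually requires work in this paper's setting, and you cannot fall back on ``$\E(X)$ has no mass on hyperplanes'': the mean measures of interest here are typically atomic (e.g.\ the paper notes that finitely supported measures satisfy the margin condition, and the expected thresholded diagram $\sum_\ell \pi_\ell D^{(\ell)}_{\geq s}$ is finitely supported). So you must prove $\E(X)(N(\cb^*))=0$ for every $\cb^*\in\mathcal{C}_{opt}$ directly. The standard argument: if some atom (or, more generally, a positive-mass subset $E$ of $W_i(\cb^*)\cap\{x:\|x-c_i^*\|=\|x-c_j^*\|\}$) sits on the boundary between cells $i$ and $j$, then since $c_i^*\neq c_j^*$ the bisecting hyperplane does not contain $c_i^*$, so the centroid of the mass on $E$ differs from $c_i^*$. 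Reassigning $E$ to cell $j$ leaves the distortion of the fixed codebook unchanged, but the centroid of the depleted cell $i$ moves away from $c_i^*$; replacing $c_i^*$ by that new centroid strictly decreases the distortion by the parallel-axis identity, contradicting $F(\cb^*)=F^*$. With boundaries null on $\mathcal{C}_{opt}$, closed- and open-cell masses coincide there, and your minimizing-sequence/semicontinuity argument closes the proof of $p_{min}>0$. Modulo writing out that step, the proposal is correct.
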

We will further assume that $\E(X)$ satisfies a so-called \textit{margin condition}, defined in \cite[Definition 2.1]{Levrard15} and recalled below. For any subset $A$ of an Euclidean space, and $r>0$, we denote by $\B(A,r)$ the set $\cup_{u \in A} \B(u,r)$.
   \begin{defi}\label{def:margincondition}
         $\E(X) \in \mathcal{M}(R,M)$ satisfies a margin condition with radius $r_0 >0$ if and only if, for all $0 \leq t \leq r_0$,
					\begin{align*}
					 \sup_{\cb^* \in \mathcal{C}_{opt}} \E(X) \left ( \mathcal{B}(N(\cb^*),t) \right )  \leq \frac{B p_{min}}{128 R^2}t.
					\end{align*}
         \end{defi}
In a nutshell, a margin condition ensures that the mean distribution $\E(X)$ is well-concentrated around $k$ poles. For instance, finitely-supported distributions satisfy a margin condition. 
Following \cite{Levrard18}, a margin condition will ensure that usual $k$-means type  algorithms are almost optimal in terms of distortion. Up to our knowledge, margin-like conditions are always required to guarantee convergence of Lloyd-type algorithms \cite{Monteleoni16,Levrard18}. 

 A special interest will be paid to the sample-size dependency of the excess distortion. From this standpoint, a first negative result may be derived from \cite[Proposition 7]{Levrard18}.  Namely, for any empirically designed codebook $\hat{\cb}$, we have
\begin{align}\label{eq:borneinf}
\sup_{\{X \mid \E(X)\mbox{ has a $r_0$-margin}\}} \frac{\E( F(\hat{\cb}) - F^*)}{\E(X)(\R^d)} \geq c_0 R^2 \frac{k^{ 1 - \frac{2}{d}}}{n}. 
\end{align}
In fact, this bounds holds in the special case where $X$ satisfies the additional assumption $X=\delta_{X^{(1)}}$ a.s., pertaining to the vector quantization case. Thus it holds in the general case. This small result ensures that the sample-size dependency of the minimax excess distortion over the class of distribution of measures whose mean measure satisfies a margin condition with radius $r_0$ is of order $1/n$ or greater. 

A first upper bound on this minimax excess distortion may be derived from the following Theorem \ref{thm:optLloyd}, that investigates the performance of the output of Algorithm \ref{lst:lloyd}. We recall that, for $N \in \mathbb{N}^*$, $\mathcal{M}_N(R,M)$ denotes the subset of measures in $\mathcal{M}(R,M)$ that are supported by $N$ points.
\begin{theorem}\label{thm:optLloyd}
Let $X \in \mathcal{M}_{N_{max}}(R,M)$, for some $N_{max} \in \mathbb{N}^*$. Assume that $\mathbb{E}(X)$ satisfies a margin condition with radius $r_0$, and denote by $R_0= \frac{Br_0}{16\sqrt{2}R}$, $\kappa_0 = \frac{R_0}{R}$. Choose $T = \lceil \frac{\log(n)}{\log(4/3)} \rceil$, and let $\cb^{(T)}$ denote the output of Algorithm \ref{lst:lloyd}. If $\cb^{(0)} \in \B(\mathcal{C}_{opt},R_0)$, then, for $n$ large enough, with probability $1-9e^{-c_1 n p_{min}^2 \kappa_0^2/M^2 } - e^{-x}$, where $c_1$ is a constant, we have
\begin{align*}
F(\cb^{(T)}) - F^* \leq \E(X)(\R^d) \left (\frac{ B^2r_0^2}{512 R^2 n} + C \frac{M^2 R^2 k^2d \log(k)}{n p_{min}^2}(1+x) \right ),
\end{align*}
for all $x>0$, where $C$ is a constant.
\end{theorem}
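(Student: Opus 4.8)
The plan is to follow the now-standard analysis of Lloyd-type algorithms under a margin condition, as developed in \cite{Levrard18}: view one iteration of Algorithm~\ref{lst:lloyd} as its \emph{population} counterpart — the map sending a codebook $\cb$ to the vector of $\E(X)$-barycenters of the Voronoi cells $W_j(\cb)$ — perturbed by the fluctuation of $\bar X_n$ around $\E(X)$, control each of the two pieces, and then unroll the resulting one-step recursion over the $T=\lceil\log n/\log(4/3)\rceil$ iterations.

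\medskip
\textbf{Step 1 (deterministic facts near the optimum).} First I would collect, from \cite{Levrard15,Levrard18}, the consequences of the margin condition with radius $r_0$: on the ball $\B(\mathcal{C}_{opt},R_0)$ with $R_0=Br_0/(16\sqrt 2 R)$ the population Lloyd map is single-valued, leaves $\B(\mathcal{C}_{opt},R_0)$ stable, contracts both $d(\cdot,\mathcal{C}_{opt})$ and the excess distortion $F(\cdot)-F^\ast$ by a fixed factor strictly less than one per step, and satisfies $F(\cb)-F^\ast\asymp d(\cb,\mathcal{C}_{opt})^2$ there, with constants depending only on $B$, $R$, $p_{min}$. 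In particular this already bounds $F(\cb^{(0)})-F^\ast$ by a constant times $R_0^2=B^2r_0^2/(512R^2)$.

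\medskip
\textbf{Step 2 (uniform concentration of the empirical mean measure).} Since $X_1,\dots,X_n$ are i.i.d.\ with $X_i(A)\in[0,M]$ a.s., Bernstein's inequality controls $\bigl|\bar X_n(A)-\E(X)(A)\bigr|$ and $\bigl\|\int_A u\,\bar X_n(du)-\int_A u\,\E(X)(du)\bigr\|$ for a fixed Borel set $A$. I would take $A=W_j(\cb)$ and make the estimate uniform over all $\cb\in\B(\mathcal{C}_{opt},R_0)$ and all $j\le k$ by a covering/chaining argument: each cell is an intersection of at most $k$ half-spaces, and a net of $\B(\mathcal{C}_{opt},R_0)$ (whose metric entropy scales like $kd$) combined with the Lipschitz dependence of the cell barycenters on $\cb$ away from the skeleton $N(\cb)$ — where the margin condition makes the offending $\E(X)$-mass negligible — yields, on an event of probability at least $1-9e^{-c_1np_{min}^2\kappa_0^2/M^2}-e^{-x}$, a uniform bound on the stochastic part of the centroid update. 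After conversion to excess distortion (via $F(\cdot)-F^\ast\asymp d(\cdot,\mathcal{C}_{opt})^2$) and summation over the $k$ cells, this produces a noise floor $\eta_n^2\asymp\frac{M^2R^2k^2d\log k}{np_{min}^2}(1+x)$; the exponent $np_{min}^2\kappa_0^2/M^2$ is precisely what forces $\eta_n$ to be small compared with $R_0$ ``for $n$ large enough''.

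\medskip
\textbf{Step 3 (recursion and unrolling).} On this event I would prove by induction on $t\le T$ that $\cb^{(t)}\in\B(\mathcal{C}_{opt},R_0)$ and that $a_t:=F(\cb^{(t)})-F^\ast$ obeys
\[
a_{t+1}\le \tfrac34\,a_t+\eta_n^2,
\]
by composing the population contraction of Step~1 with the $O(\eta_n)$ perturbation of Step~2 and using $F(\cb)-F^\ast\asymp d(\cb,\mathcal{C}_{opt})^2$, so that an $O(\eta_n)$ codebook error turns into an $O(\eta_n\sqrt{a_t})+O(\eta_n^2)$ distortion error, absorbed by Young's inequality at the cost of inflating the contraction ratio up to $3/4$. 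Iterating from $a_0\lesssim R_0^2$ and using $(3/4)^T\le 1/n$ yields $a_T\lesssim R_0^2/n+\eta_n^2$, that is
\[
F(\cb^{(T)})-F^\ast\;\lesssim\;\frac{B^2r_0^2}{512R^2\,n}+\frac{M^2R^2k^2d\log k}{n\,p_{min}^2}(1+x);
\]
tracking the total-mass scaling throughout — $F$, $F^\ast$, $p_{min}$ and the Bernstein variances are all proportional to $\E(X)(\R^d)$, so one may as well reduce to the probability-measure case — produces the $\E(X)(\R^d)$ prefactor of the announced bound.

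\medskip
\textbf{Main obstacle.} The delicate point is the interplay between Steps~2 and~3: one needs a deviation bound for the empirical centroid map that is simultaneously uniform over the whole neighbourhood $\B(\mathcal{C}_{opt},R_0)$, small enough relative to $R_0$ that the iterates provably never escape it (this is where the hypothesis ``$n$ large enough'' and the exponent $np_{min}^2\kappa_0^2/M^2$ enter), and cheap enough that the union bound — which also runs over the $T\asymp\log n$ iterations — costs only the stated logarithmic factor. Handling the geometry of the Voronoi cells near their skeleton, where the barycenter map fails to be Lipschitz but the margin condition keeps the offending mass negligible, is the technical heart of the argument.
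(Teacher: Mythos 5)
Your proposal is correct and follows essentially the same route as the paper: a one-step contraction-plus-noise recursion obtained by combining the margin-condition estimates on the population Lloyd map (Lemma \ref{lem:gradient_margin}) with uniform empirical-process bounds over Voronoi cells via their VC dimension (Lemmas \ref{lem:Lloyd_one_iteration} and \ref{lem:concentration_Lloyd}), then unrolled over $T=\lceil\log n/\log(4/3)\rceil$ iterations using $(3/4)^T\leq 1/n$ and $\|\cb^{(0)}-\cb^*\|^2\leq R_0^2$. The only cosmetic difference is that the paper runs the recursion on $\|\cb^{(t)}-\cb^*\|^2$ rather than on $F(\cb^{(t)})-F^*$ directly, converting to excess distortion only at the end; under the margin condition the two are equivalent up to constants, as you note.
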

A proof of Theorem \ref{thm:optLloyd} is given in Section \ref{sec:proof_thm_optLloyd}. Combined with \eqref{eq:borneinf}, Theorem \ref{thm:optLloyd} ensures that Algorithm \ref{lst:lloyd} reaches the minimax precision rate in terms of excess distortion after $O(\log(n))$ iterations, provided that the initialization is good enough. Note that Theorem \ref{thm:optLloyd} is valid for discrete measures that are supported by a uniformly bounded number of points $N_{max}$. This assumption is relaxed for Algorithm \ref{lst:macqueen} in Theorem \ref{thm:opt_Mc_Queen}.

In the vector quantization case, Theorem \ref{thm:optLloyd} might be compared with \cite[Theorem 3.1]{Levrard15} for instance. In this case, the dependency on the dimension $d$ provided by Theorem \ref{thm:optLloyd} is sub-optimal. Slightly anticipating, dimension-free bounds in the mean-measure quantization case exist, for instance by considering the output of Algorithm \ref{lst:macqueen}.

Theorem \ref{thm:optLloyd} guarantees that choosing $T=2 \log(n)$ and repeating several Lloyd algorithms starting from different initializations provides an optimal quantization scheme. From a theoretical viewpoint, for a fixed $R_0$, the number of trials to get a good initialization grows exponentially with the dimension $d$. In fact, Algorithm \ref{lst:lloyd} seems quite sensitive to the initialization step, as for every EM-like algorithm. From a practical viewpoint, we use $10$ initializations using a \texttt{$k$-means++} sampling \cite{Arthur07} for the experiments of Section \ref{sec:Expes}, leading to results that are close to the reported ones for Algorithm \ref{lst:macqueen}. 

 Note that combining \cite[Theorem 3]{Levrard18} or \cite{Monteleoni16} and a deviation inequality for distortions such as in \cite{Levrard15} gives an alternative proof of the optimality of Lloyd type schemes, in the sample points case where $X_i=\delta_{X_i^{(1)}}$. In addition, Theorem \ref{thm:optLloyd} provides an upper bound on the number of iterations needed, as well as an extension of these results to the quantization of mean measure case. Its proof, that may be found in Section \ref{sec:proof_thm_optLloyd}, relies on stochastic gradient techniques in the convex and non-smooth case.
Bounds for the single-pass Algorithm \ref{lst:macqueen} might be stated the same way.
\begin{theorem}\label{thm:opt_Mc_Queen}
Let $X \in \mathcal{M}(R,M)$. Assume that $\E(X)$ satisfies a margin condition with radius $r_0$, and denote by $R_0= \frac{Br_0}{16\sqrt{2}R}$, $\kappa_0 = R_0/R$. If $(B_1, \hdots, B_T)$ are equally sized mini-batches of length $c k M^2\log(n)/(\kappa_0 p_{min})^2$, where $c$ is a positive constant, and $\cb^{(T)}$ denotes the output of Algorithm \ref{lst:macqueen}, then, provided that $\cb^{(0)} \in \mathcal{B}(\mathcal{C}_{opt},R_0)$, we have
\begin{align*}
\mathbb{E} \left ( F(\cb^{(T)}) - F^* \right ) \leq \E(X)(\R^d) \left ( C k^2 M^3 R^2 \frac{\log(n)}{n \kappa_0^2 p_{min}^3} \right ).
\end{align*}

In the sample point case, the same result holds with the centroid update
\[
c^{(t+1)}_j=  c_j^{(t)} - \frac{ \int (c_j^{(t)} - u) \1_{W_j(\cb^{(t)})}(u) \bar{X}_{B_{t+1}}(du)}{(t+1)\bar{X}_{B_{t+1}}  \left ( W_j(\cb^{(t)})  \right )} ,
\]
that is without splitting the batches.
\end{theorem}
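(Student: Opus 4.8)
The plan is to read Algorithm~\ref{lst:macqueen} as a preconditioned stochastic gradient descent on the distortion $F$ with vanishing step sizes $1/(t+1)$, and to combine the classical SGD analysis with the geometric control the margin condition yields. The starting point is the identity $\nabla_{c_j}F(\cb) = 2\int (c_j-u)\1_{W_j(\cb)}(u)\,\E(X)(du) = 2\,\E(X)(W_j(\cb))\bigl(c_j-m_j(\cb)\bigr)$, where $m_j(\cb)$ is the $\E(X)$-centroid of the $j$-th Voronoi cell. The centroid update of Algorithm~\ref{lst:macqueen} is exactly $c_j^{(t+1)} = c_j^{(t)} - \tfrac{1}{t+1}\bigl(c_j^{(t)} - \hat m_j^{(t+1)}\bigr)$, with $\hat m_j^{(t+1)}$ the empirical centroid of $W_j(\cb^{(t)})$ over the mini-batch $B_{t+1}$ — i.e. an empirical, cell-mass-preconditioned estimate of $\nabla_{c_j}F$. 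Writing $\Phi(\cb) = \cb - m(\cb)$ and $\hat\Phi_{t+1}$ for its mini-batch version, the iteration reads $\cb^{(t+1)} = \pi_{\B(0,R)^k}\bigl(\cb^{(t)} - \tfrac{1}{t+1}\hat\Phi_{t+1}(\cb^{(t)})\bigr)$, a projected stochastic gradient scheme whose associated deterministic iteration is one Lloyd step.

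Next I would import from \cite{Levrard18} (exactly as in the proof of Theorem~\ref{thm:optLloyd}) the consequences of the margin condition on $\B(\mathcal{C}_{opt},R_0)$: two-sided quadratic control $c\,\|\cb-\cb^*(\cb)\|^2 \le F(\cb)-F^* \le C\,\|\cb-\cb^*(\cb)\|^2$ (with $\cb^*(\cb)$ a nearest optimal codebook), $\|\Phi(\cb)\| \le C'\|\cb-\cb^*(\cb)\|$, and — crucially — the fact that the \emph{preconditioned} Lloyd map is a strong contraction towards $\mathcal{C}_{opt}$, because the margin condition forces the Voronoi-boundary mass, hence the off-diagonal part of $\nabla^2 F(\cb^*)$, to be small, so that the contraction factor $\rho$ is small and $\langle\Phi(\cb),\cb-\cb^*(\cb)\rangle \ge \beta\|\cb-\cb^*(\cb)\|^2$ holds with $\beta = 1-\rho$ bounded below by a constant close to $1$. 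On the same neighbourhood, and using that the cell masses stay above $p_{min}/2$, the mini-batch field is conditionally almost unbiased with variance $\E\bigl[\|\hat\Phi_{t+1}(\cb^{(t)})-\Phi(\cb^{(t)})\|^2 \mid \mathcal{F}_t\bigr] \lesssim kR^2M^2/(m\,p_{min}^2)$, $m$ being the mini-batch length; the splitting of each $B_i$ into two independent halves is used precisely here, to decouple the estimation of the cell-mass denominator from that of the numerator and so remove the ratio bias — a step that is unnecessary when the $X_i$ are Dirac masses, since the numerator and denominator then carry the same indicator, which is why the sample-point statement needs no splitting.

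The third and most delicate step is confinement: I would show by induction on $t$ that, on an event of probability at least $1-9e^{-c_1 n p_{min}^2\kappa_0^2/M^2}$, every iterate stays in $\B(\mathcal{C}_{opt},R_0)$. If $\cb^{(t)}$ lies there, the contraction of $\Phi$ drags $\cb^{(t)}-\tfrac{1}{t+1}\Phi(\cb^{(t)})$ strictly inside, the projection $\pi_{\B(0,R)^k}$ cannot increase the distance to $\mathcal{C}_{opt}$, and the stochastic perturbation $\tfrac{1}{t+1}\|\hat\Phi_{t+1}-\Phi\|$ is below the remaining margin as soon as the mini-batch size is of order $kM^2\log(n)/(\kappa_0 p_{min})^2$ — the $\log(n)$ being what a Bernstein inequality for the empirical masses and centroids of bounded measures, union-bounded over the $T\le n$ iterations, costs, and the $\kappa_0^{-2}$ coming from comparing the per-step deviation to the radius $R_0 = \kappa_0 R$. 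On the complementary event we bound $F(\cb^{(T)})-F^*$ by the trivial $4MR^2$, contributing a negligible additive term.

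Finally, on the confinement event, set $\delta_t = \|\cb^{(t)}-\cb^*(\cb^{(t)})\|^2$; expanding $\E[\delta_{t+1}\mid\mathcal{F}_t]$ and using the one-sided bound on $\langle\Phi,\cdot\rangle$ and the variance bound gives $\E[\delta_{t+1}\mid\mathcal{F}_t] \le \bigl(1-\tfrac{2\beta}{t+1}+\tfrac{C}{(t+1)^2}\bigr)\delta_t + \tfrac{\sigma^2}{(t+1)^2}$ with $\sigma^2 \lesssim kR^2M^2/(m\,p_{min}^2)$; since $2\beta>1$, solving this standard recursion yields $\E[\delta_T] \lesssim \sigma^2/\bigl((2\beta-1)T\bigr)$, and plugging in $T\asymp n/m$ together with the prescribed $m$ gives $\E[\delta_T] \lesssim k^2 M^3 R^2 \log(n)/(n\,\kappa_0^2 p_{min}^3)$ up to constants. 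The quadratic upper bound $F(\cb)-F^* \le C\|\cb-\cb^*(\cb)\|^2$ then transfers this to the distortion, the factor $\E(X)(\R^d)$ appearing through the homogeneity of $F$ in the total mass, which produces the stated inequality; the sample-point variant follows the same scheme with the Dirac simplification noted above. I expect the confinement step to be the real obstacle: outside $\B(\mathcal{C}_{opt},R_0)$ neither the quadratic growth nor the contraction is available, so the whole argument hinges on never leaving it, and this is exactly what dictates the (large) mini-batch size and the two-halves splitting and demands the most care with the deviation inequalities and union bound.
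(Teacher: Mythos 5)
Your proposal follows essentially the same route as the paper: the update is read as a projected, preconditioned stochastic gradient step with step size $1/(t+1)$; the margin condition (via Lemma \ref{lem:gradient_margin}) supplies the one-sided contraction inequality near $\mathcal{C}_{opt}$; confinement of the iterates in $\B(\mathcal{C}_{opt},R_0)$ is obtained by induction on high-probability events whose control is exactly what forces the mini-batch size $\gtrsim kM^2\log(n)/(\kappa_0 p_{min})^2$ and the two-halves splitting (to decouple the cell-mass denominator from the numerator, unnecessary for Dirac masses); and the resulting recursion $\E\delta_{t+1}\le(1-\tfrac{2\beta}{t+1})\E\delta_t+O((t+1)^{-2})$ is solved as in Rakhlin et al.\ before converting $T=n/m$. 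The only divergence is bookkeeping at the very end: the paper bounds the second moment of the update crudely by $\|g_{t+1}\|^2\le 4kR^2\le 4kMR^2/p_{min}$ rather than exploiting the $1/m$ variance reduction you invoke, so your own intermediate quantities would in fact yield $\E\delta_T\lesssim kR^2M^2/(np_{min}^2)$, which is stronger than (and hence still implies) the stated bound, even though the final expression you write down does not follow arithmetically from them.
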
 
A proof of Theorem \ref{thm:opt_Mc_Queen} is given in Section \ref{sec:proof_thm_opt_Mc_Queen}. Note that Theorem \ref{thm:opt_Mc_Queen} does not require the values of $X$ to be finitely supported measures, contrary to Theorem \ref{thm:optLloyd}.  Theorem \ref{thm:opt_Mc_Queen} entails that the resulting codebook of Algorithm \ref{lst:macqueen} has an optimal distortion, up to a $\log(n)$ factor and provided that a good enough initialization is chosen. As for Algorithm \ref{lst:lloyd}, several initializations may be tried and the codebook with the best empirical distortion is chosen. In practice, the results of Section \ref{sec:Expes} are given for a single initialization at random using \texttt{$k$-means++} sampling \cite{Arthur07}.

Note that Theorem \ref{thm:opt_Mc_Queen} provides a bound on the expectation of the distortion. Crude deviation bounds can be obtained using for instance a bounded difference inequality (see, e.g., \cite[Theorem 6.2]{Massart13}). In the point sample case, more refined bounds can be obtained, using for instance \cite[Theorem 4.1, Proposition 4.1]{Levrard15}. To investigate whether these kind of bounds still hold in the measure sample case is beyond the scope of the paper. Note also that the bound on the excess distortion provided by Theorem \ref{thm:opt_Mc_Queen} does not depend on the dimension $d$. This is also the case in \cite[Theorem 3.1]{Levrard15}, where a dimension-free theoretical bound on the excess distortion of an empirical risk minimizer is stated in the sample points case. Interestingly, this bound also has the correct dependency in $n$, namely $1/n$. According to Theorem \ref{thm:optLloyd} and \ref{thm:opt_Mc_Queen}, providing a quantization scheme that provably achieves a dimension-free excess distortion of order $1/n$ in the sample measure case remains an open question. 

We may now connect Theorem \ref{thm:optLloyd} and \ref{thm:opt_Mc_Queen} to the results of Section \ref{sec:measures_clustering}.
\begin{corollary}\label{cor:vectorization_Lloyd}
Assume that $\E(X)$ satisfies the requirements of Theorem \ref{thm:optLloyd}, and that $\cb^*$ provides a $(p,r,\Delta)$ shattering of $X_1, \hdots, X_n$, with $p\geq 2$. Let $\hat{\cb}_n$ denote the output of Algorithm \ref{lst:lloyd}. Then $\hat{\cb}_n$ is a $(\lfloor \frac{p}{2} \rfloor ,r,\Delta)$ shattering of $X_1, \hdots, X_n$, with probability larger than $1-\exp{\left [-C\left ( \frac{n r^2 p_{min}^2}{p^2M^2R^2k^2 d \log(k)} - \frac{p_{min}^2 B^2 r_0^2}{M^2 R^4 k^2 d \log(k)} \right ) \right ]}$, where $C$ is a constant.
\end{corollary}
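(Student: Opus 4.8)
The plan is to use that $(p,r,\Delta)$-shattering is a \emph{stable} property under small codebook perturbations, and then to combine Theorem~\ref{thm:optLloyd} with the margin condition to show that $\hat\cb_n$ lands close (in Euclidean distance on $(\R^d)^k$) to the optimal codebook $\cb^*$ that shatters the sample.

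\textbf{Step 1 (stability of shattering).} Suppose $\cb^*\in\Ccal_{opt}$ $(p,r,\Delta)$-shatters $X_1,\dots,X_n$, set $q=\lfloor p/2\rfloor\ge 1$ (this needs $p\ge2$), and let $\hat\cb=(\hat c_1,\dots,\hat c_k)$ satisfy $\|\hat c_j-c^*_j\|\le r/p$ for every $j$, after a suitable permutation of the codepoints. I claim $\hat\cb$ $(q,r,\Delta)$-shatters $X_1,\dots,X_n$. Fix $i_1,i_2$ with $Z_{i_1}\neq Z_{i_2}$ and let $j$ be the index witnessing the shattering for $\cb^*$, say $X_{i_1}(\B(c^*_j,r/p))\ge X_{i_2}(\B(c^*_j,4pr))+\Delta$. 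Since $q\le p/2$, one has $r/q\ge 2r/p$, so the triangle inequality gives $\B(c^*_j,r/p)\subseteq \B(\hat c_j,2r/p)\subseteq \B(\hat c_j,r/q)$; and since $4q+1\le 4p$ (using $q\le p/2$, $p\ge1$), $\B(\hat c_j,4qr)\subseteq \B(c^*_j,4qr+r/p)\subseteq \B(c^*_j,(4q+1)r)\subseteq \B(c^*_j,4pr)$. By monotonicity of measures,
\begin{align*}
X_{i_1}\big(\B(\hat c_j,r/q)\big)\ \ge\ X_{i_1}\big(\B(c^*_j,r/p)\big)\ \ge\ X_{i_2}\big(\B(c^*_j,4pr)\big)+\Delta\ \ge\ X_{i_2}\big(\B(\hat c_j,4qr)\big)+\Delta,
\end{align*}
which is the $(q,r,\Delta)$-shattering inequality for $\hat\cb$ at codepoint $j$.

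\textbf{Step 2 (distortion gap controls codebook distance).} Under the margin condition (Definition~\ref{def:margincondition}), standard quantization arguments in the spirit of \cite{Levrard18,Levrard15} provide a constant $c_2>0$ such that, for any $\cb\in\B(\Ccal_{opt},R_0)$, $F(\cb)-F^*\ge c_2\|\cb-\cb^*\|^2$, where $\cb^*$ is the projection of $\cb$ onto $\Ccal_{opt}$ (up to permutation). On the high-probability event of Theorem~\ref{thm:optLloyd} the iterates of Algorithm~\ref{lst:lloyd}, hence $\hat\cb_n$, stay in $\B(\Ccal_{opt},R_0)$, and the theorem controls $F(\hat\cb_n)-F^*$; combining the two yields, with probability $\ge 1-9e^{-c_1 np_{min}^2\kappa_0^2/M^2}-e^{-x}$ for every $x>0$,
\begin{align*}
\|\hat\cb_n-\cb^*\|^2\ \le\ \frac{\E(X)(\R^d)}{c_2}\left(\frac{B^2r_0^2}{512R^2 n}+C\,\frac{M^2R^2k^2 d\log(k)}{n\,p_{min}^2}\,(1+x)\right).
\end{align*}
We read the corollary's hypothesis as asserting that this $\cb^*$ — the optimal codebook nearest $\hat\cb_n$ — is the one that $(p,r,\Delta)$-shatters the sample (under the margin condition the structural facts of Proposition~\ref{prop:B_p_min} hold for it).

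\textbf{Step 3 (optimizing $x$) and the main obstacle.} By Step 1 it suffices that the right-hand side above be at most $r^2/p^2$. Imposing this, solving for $x$, and taking $x$ to be the largest admissible value, the bias term $B^2r_0^2/(512R^2 n)$ produces a fixed negative shift of order $p_{min}^2 B^2 r_0^2/(M^2R^4 k^2 d\log k)$; after absorbing numerical constants, $c_2$, and the bounded total-mass factor into a generic constant $C$ one obtains that $\hat\cb_n$ $(\lfloor p/2\rfloor,r,\Delta)$-shatters $X_1,\dots,X_n$ with probability at least
\begin{align*}
1-9e^{-c_1 np_{min}^2\kappa_0^2/M^2}-\exp\!\left[-C\left(\frac{n r^2 p_{min}^2}{p^2 M^2 R^2 k^2 d\log(k)}-\frac{p_{min}^2 B^2 r_0^2}{M^2 R^4 k^2 d\log(k)}\right)\right],
\end{align*}
and for $n$ large enough (as already required by Theorem~\ref{thm:optLloyd}) the first term is dominated by the second, giving the stated bound after adjusting $C$. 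The elementary part is Step~1; the genuine input is the quadratic growth of the distortion under the margin condition in Step~2 — this is why the hypotheses of Theorem~\ref{thm:optLloyd} are invoked — and, although it is essentially available from \cite{Levrard18,Levrard15}, one must verify that its constant and its localization radius are compatible with the bound quoted in Theorem~\ref{thm:optLloyd}. The rest is bookkeeping: choosing $x$ optimally and collapsing the total-mass factor and the two exponential terms into the single clean exponent of the statement.
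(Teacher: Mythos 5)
Your Step 1 (stability of shattering under a perturbation of size $r/p$ of each codepoint) is exactly the paper's argument, and your version is in fact written more carefully than the paper's (which contains a small typo at the corresponding inclusion). The divergence is in Step 2. The paper does not invert the distortion bound: its proof of Theorem \ref{thm:optLloyd} proceeds through Lemma \ref{lem:Lloyd_one_iteration}, whose iterated conclusion is a bound in parameter space, $\|\cb^{(T)}-\cb^*\|^2 \leq (3/4)^T\|\cb^{(0)}-\cb^*\|^2 + 4KD_n^2/p_{min}^2$, and the corollary simply reads off $\|\hat\cb_n-\cb^*\|\leq r/p$ from that display (choosing $x$ as large as the constraint allows, which is where the two terms in the exponent come from). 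You instead pass through the excess distortion $F(\hat\cb_n)-F^*$ and invert it via a quadratic-growth inequality $F(\cb)-F^*\geq c_2\|\cb-\cb^*\|^2$. That inequality is indeed available under the margin condition (it is the content of the localization results in \cite{Levrard15,Levrard18}), so your route is viable, but it has two costs you should make explicit. First, the constant there is not universal: one gets $c_2\asymp p_{min}$, while the distortion upper bound in Theorem \ref{thm:optLloyd} carries the factor $\E(X)(\R^d)\leq M$; composing the two leaves a distribution-dependent factor of order $p_{min}/M$ inside the exponent, which cannot be absorbed into the universal constant $C$ of the statement — so your bound is strictly weaker than the one claimed (and than what the direct route gives). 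Second, you are importing an unproved external lemma where none is needed: the cleanest fix is to cite the parameter-space display at the end of Section \ref{sec:proof_thm_optLloyd} rather than the distortion statement of Theorem \ref{thm:optLloyd}. With that substitution your Step 3 bookkeeping goes through as written and reproduces the paper's exponent.
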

A proof of Corollary \ref{cor:vectorization_Lloyd} is given in Section \ref{sec:proof_cor_vectorization_Lloyd}. Corollary \ref{cor:vectorization_Lloyd} ensures that if $\cb^*$ shatters well $X_1, \hdots, X_n$, so will $\hat{\cb}_n$, where $\hat{\cb}_n$ is built with Algorithm \ref{lst:lloyd}. To fully assess the relevance of our vectorization technique, it remains to prove that $k$-points optimal codebooks for the mean measure provide a shattering of the sample measure, with high probability. This kind of result implies more structural assumptions on the components of the mixture $X$. The following section describes such a situation.

\subsection{Application: clustering persistence diagrams}\label{sec:appli_PD}

In this section we investigate the properties of our measure-clustering scheme in a particular instance of i.i.d. measure observations. Recall that in our mixture model, $(Z_1, \hdots, Z_n)$ is the vector of i.i.d. labels such that $X_i\mid \{Z_i = \ell\} \sim X^{(\ell)}$. We further denote by $\pi_\ell = \mathbb{P}(Z_i=\ell)$, so that $X \sim \sum_{\ell=1}^L \pi_\ell X^{(\ell)}$.

Our mixture of persistence diagrams model is the following. For $\ell \in [\![1, L]\!]$ let $S^{(\ell)}$ denote a compact $d_\ell$-manifold and $D^{(\ell)}$ denote the persistence diagram generated by $d_{S^{(\ell)}}$, the distance to $S^{(\ell)}$. This persistence diagram can either be  thought of as a multiset of points (set of points with integer-valued multiplicity) in the half-plane $H^+=\{(b,d) \in \R^2\mid 0 \leq b \leq  d \}$ (for a general introduction to persistence diagrams the reader is referred to \cite[Section 11.5]{boissonnat2018geometric}), or as a discrete measure on $H^+$ (see, e.g., \cite{ChazalDivol18} for more details on the measure point of view on persistence diagrams). For a fixed scale $s>0$, we define $D^{(\ell)}_{\geq s}$ as the thresholded persistence diagram of $d_{S^{(\ell)}}$, that is
\[
D^{(\ell)}_{\geq s} = \sum_{\{(b,d) \in D^{(\ell)} \mid d-b \geq s\}}n(b,d)\delta_{(b,d)} := \sum_{j=1}^{k_0^{(\ell)}} n(m_j^{(\ell)}) \delta_{m_j^{(\ell)}},
\] 
where the multiplicities $n(b,d) \in \mathbb{N}^*$, and the $m_j^{(\ell)}$'s satisfy $(m_j^{(\ell)})_2 - (m_j^{(\ell)})_1 \geq s$. The following lemma ensures that $k_0^{(\ell)}$ is finite.
\begin{lem} \label{lemma-truncated-dgm}
%Let $M$ be a compact $d$-dimensional submanifold of $\R^D$ with positive reach $\tau$. 
Let $S$ be a compact subset of $\R^d$, and $D$ denote the persistence diagram of the distance function $d_S$. For any $s >0$, the truncated diagram consisting of the points $m=(m^1, m^2) \in D$ such that $m^2 - m^1 \geq s$ is finite.    
\end{lem}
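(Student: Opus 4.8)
The plan is to compare $D=\mathrm{Dgm}(d_S)$ with the persistence diagram of the distance function to a \emph{finite} approximation of $S$ --- for which finiteness of the diagram is immediate --- and to transfer finiteness of the high-persistence part of $D$ via the bottleneck stability theorem. I will use two classical facts: for closed bounded subsets $A,B$ of $\R^d$, $\|d_A-d_B\|_\infty=d_H(A,B)$ is the Hausdorff distance; and the sublevel-set persistence module of the distance function to a compact set is q-tame, so $\mathrm{Dgm}(d_S)$ is a well-defined multiset in $H^+$ and the bottleneck stability theorem applies to it.

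Fix $s>0$ and choose $\varepsilon\in(0,s/2)$. By compactness of $S$ there is a finite subset $S'\subseteq S$ which is an $\varepsilon$-net of $S$, so $d_H(S,S')\le\varepsilon$ and hence $\|d_S-d_{S'}\|_\infty\le\varepsilon$. Next I would check that $\mathrm{Dgm}(d_{S'})$ is finite: the sublevel sets $\{d_{S'}\le r\}$ are finite unions of balls $\bigcup_{y\in S'}\B(y,r)$, so by the (functorial) nerve lemma the offset filtration of $S'$ is, as a persistence module, isomorphic to that of the \v{C}ech filtration of $S'$ --- a filtered simplicial complex on a finite vertex set --- whose persistence diagram is a finite multiset.

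Then I would apply bottleneck stability: since $\|d_S-d_{S'}\|_\infty\le\varepsilon$, for any $\varepsilon'\in(\varepsilon,s/2)$ there is a partial matching $\gamma$ between $\mathrm{Dgm}(d_S)$ and $\mathrm{Dgm}(d_{S'})$, i.e.\ a bijection between subsets $A\subseteq\mathrm{Dgm}(d_S)$ and $B\subseteq\mathrm{Dgm}(d_{S'})$, such that every point of $\mathrm{Dgm}(d_S)\setminus A$ lies within $\ell^\infty$-distance $\varepsilon'$ of the diagonal. If $m=(m^1,m^2)\in D$ has $m^2-m^1\ge s$, its $\ell^\infty$-distance to the diagonal equals $(m^2-m^1)/2\ge s/2>\varepsilon'$, so $m\notin\mathrm{Dgm}(d_S)\setminus A$; hence $m\in A$ and is matched. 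Since $\gamma|_A$ is injective, the number of such $m$ is at most $\#B\le\#\,\mathrm{Dgm}(d_{S'})<\infty$, which proves the lemma.

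The only genuinely delicate points are the two black boxes: q-tameness of the distance-to-compact-set module (which makes $\mathrm{Dgm}(d_S)$ and its stability meaningful) and finiteness of the \v{C}ech diagram of a finite point set; both are standard. I note that one can instead argue without stability: the offsets $S^r$ are star-shaped --- hence contractible --- for $r$ larger than the radius of any ball enclosing $S$, so $D$ is contained in a bounded triangle of $H^+$; subdividing the (finite) range of parameters into steps of size at most $s/2$ and observing that every bar of persistence $\ge s$ survives across one such step, the count is bounded by a finite sum of persistent Betti numbers $\mathrm{rank}\big(H_*(S^{a})\to H_*(S^{b})\big)$, finite again by q-tameness.
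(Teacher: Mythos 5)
Your proof is correct, but it takes a genuinely different route from the paper's. The paper first confines $D$ to the square $[0,2\,{\rm diam}(S)]^2$ using the critical-point theory of distance functions (the generalized gradient and the Isotopy Lemma show that nothing happens above level $2\,{\rm diam}(S)$), and then invokes $q$-tameness directly: the region $\{d\geq b+s\}$ inside that square is covered by finitely many quadrants $Q_{(b,b+s/2)}$, each of which meets $D$ in a finite multiset by definition of $q$-tameness. You instead compare $d_S$ with $d_{S'}$ for a finite $\varepsilon$-net $S'$ and transfer finiteness of the high-persistence part through the bottleneck stability theorem, after establishing finiteness of $\mathrm{Dgm}(d_{S'})$ via the persistent nerve lemma. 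Both arguments ultimately rest on $q$-tameness of the sublevel-set module of $d_S$ (you need it to make the diagram and the stability theorem meaningful; the paper needs it for the quadrant counts), but yours dispenses entirely with the geometric step bounding the diagram, at the price of invoking two heavier black boxes (stability for $q$-tame modules and the functorial nerve lemma). The alternative you sketch at the end --- boundedness of $D$ plus a finite sum of persistent Betti numbers over a grid of step $s/2$ --- is essentially the paper's argument. One small point worth a sentence in either version: the essential (infinite-death) points of $D$ also have persistence $\geq s$, but there are finitely many of them since the large sublevel sets are contractible (paper) or since they are matched to the finitely many essential points of $\mathrm{Dgm}(d_{S'})$ (your version).
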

      
A proof of Lemma \ref{lemma-truncated-dgm} is given in Appendix \ref{sec:proof_lemma_truncated_dgm}. 
Now, for $\ell \in [\![1,L]\!]$ we let $\mathbb{Y}_{N_\ell}$ be an $N_\ell$-sample drawn on $S^{(\ell)}$ with density $f^{(\ell)}$ satisfying $f^{(\ell)}(u) \geq f_{min,\ell}$, for $u \in S^{(\ell)}$. If $\hat{D}^{(\ell)}$ denotes the persistence diagram generated by $d_{\mathbb{Y}_{N_\ell}}$, we define $X^{(\ell)}$ as a thresholded version of $\hat{D}^{(\ell)}$, that is
\[
X^{(\ell)} \sim \hat{D}^{(\ell)}_{\geq s-h_\ell},
\]
for some bandwidth $h_\ell$. To sum up, conditionally on $Z_i=\ell$, $X_i$ is a thresholded persistence diagram corresponding to a $N_\ell$-sampling of the shape $S^{(\ell)}$.

To retrieve the labels $Z_i$'s from a vectorization of the $X_i$'s, we have to assume that the persistence diagrams of the underlying shapes differ by at least one point.
\begin{defi}\label{def:separability_classes}
The shapes $S^{(1)}, \hdots, S^{(\ell)}$ are \textbf{discriminable at scale} $\mathbf{s}$ if for any $1 \leq \ell_1 < \ell_2 \leq L$ there exists $m_{\ell_1,\ell_2} \in H^+$ such that
\[
D^{(\ell_1)}_{\geq s}(\{m_{\ell_1,\ell_2}\}) \neq D^{(\ell_2)}_{\geq s}(\{m_{\ell_1,\ell_2}\}),
\]
where the thresholded persistence diagrams are considered as measures.
\end{defi}
Note that if $m_{\ell_1,\ell_2}$ satisfies the discrimination condition stated above, then $m_{\ell_1,\ell_2} \in D^{(\ell_1)}_{\geq s}$ or $m_{\ell_1,\ell_2} \in D^{(\ell_2)}_{\geq s}$. Next, we must ensure that optimal codebooks of the mean measure have codepoints close enough to discrimination points. 
\begin{prop}\label{prop:mean_meas_closeto_disc_points}
Let $h_\ell = \left ( \frac{C_{d_\ell} \log(N_\ell)}{f_{min,\ell} N_\ell} \right)^{1/d_\ell}$, for some constants $C_{d_\ell}$, and $h = \max_{\ell \leq L} h_\ell$. Moreover, let $M_\ell = D^{(\ell)}_{\geq s}(H^+)$, $\bar{M}= \sum_{\ell = 1}^L \pi_\ell M_\ell$, and $\pi_{min} = \min_{ \ell \leq L} \pi_\ell$. 

Assume that $S^{(1)}, \hdots, S^{(L)}$ are discriminable at scale $s$, and let $m_1, \hdots, m_{k_0}$ denote the discrimination points. Let $K_0(h)$ denote
\[
\inf\{ k \geq 0 \mid \exists t_1, \hdots, t_{k} \quad  \bigcup_{ \ell =1}^{L} D_{\geq s}^{(\ell)} \setminus \{m_1, \hdots, m_{k_0} \}  \subset \bigcup_{s=1}^{k} \B_{\infty}(t_s,h) \}.
\]
Let $k \geq k_0 + K_0(h)$, and $(c^*_1, \hdots, c^*_k)$ denote an optimal $k$-points quantizer  of $\E(X)$. Then, provided that $N_\ell$ is large enough for all $\ell$, we have
\[
\forall j \in [\![1,k_0]\!] \quad \exists p \in [\![1,k]\!] \quad \| c_p^* - m_j \|_\infty \leq \frac{5\sqrt{\bar{M}}h}{\sqrt{\pi_{min}}}.
\]
\end{prop}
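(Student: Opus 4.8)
The plan is to show that an optimal $k$-points quantizer of $\E(X)$ must place a codepoint near each discrimination point $m_j$, by a local mass-concentration argument: $\E(X)$ has a non-negligible amount of mass concentrated in a small $\|\cdot\|_\infty$-ball around each $m_j$, and if no codepoint were close to $m_j$, relocating one codepoint to $m_j$ would strictly decrease the distortion, contradicting optimality. First I would quantify the mass that $\E(X)$ puts near $m_j$. Recall $X \sim \sum_\ell \pi_\ell X^{(\ell)}$ with $X^{(\ell)} \sim \hat D^{(\ell)}_{\ge s - h_\ell}$. Using standard stability of persistence diagrams (the bottleneck distance between $D(d_{\mathbb Y_{N_\ell}})$ and $D(d_{S^{(\ell)}})$ is at most $d_H(\mathbb Y_{N_\ell}, S^{(\ell)})$), together with the choice $h_\ell = (C_{d_\ell}\log N_\ell / (f_{min,\ell}N_\ell))^{1/d_\ell}$, one gets that with high probability $d_H(\mathbb Y_{N_\ell},S^{(\ell)}) \le h_\ell$, hence every point of $D^{(\ell)}_{\ge s}$ is matched by a point of $\hat D^{(\ell)}_{\ge s-h_\ell}$ within $\|\cdot\|_\infty$-distance $h_\ell$ with at least its multiplicity. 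Taking expectations (and controlling the low-probability bad event, using that total masses are bounded — this is where "$N_\ell$ large enough" is used), this yields a lower bound of the form $\E(X)(\B_\infty(m_j, h)) \gtrsim \pi_{min}\cdot 1$ for each discrimination point $m_j$ (the multiplicity is at least $1$), and more precisely a bound of order $\pi_{min}$ that will feed into the final constant.

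Next I would run the contradiction argument. Suppose $\min_{p} \|c_p^* - m_j\|_\infty > \rho$ for some $j$, where $\rho = 5\sqrt{\bar M}\,h/\sqrt{\pi_{min}}$. Build a competitor codebook $\cb'$ by moving the codepoint $c_p^*$ with smallest population (its Voronoi cell carries $\E(X)$-mass at most $\bar M/k$) to the location $m_j$. The distortion change splits into two parts: (i) the loss from abandoning cell $W_p(\cb^*)$, which is at most $\E(X)(W_p(\cb^*))\cdot (2R)^2 \le (\bar M/k)\,4R^2$ — actually one must be slightly more careful and bound it by $\mathrm{diam}^2$ times that mass, but in fact any point of that cell is reassigned to its second-nearest codepoint, so the increase is controlled by the cell mass times $4R^2$; (ii) the gain from now having a codepoint exactly at $m_j$: every bit of mass in $\B_\infty(m_j,h)$ that was formerly assigned to some $c_q^*$ at Euclidean distance $> \rho - \sqrt2\,h$ now gets assigned to $m_j$ at distance $\le \sqrt2\,h$, producing a distortion decrease of at least $\E(X)(\B_\infty(m_j,h))\cdot\big((\rho-\sqrt2 h)^2 - 2h^2\big)$. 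For the contradiction to fire, I need the gain to exceed the loss, i.e.
\[
c\,\pi_{min}\big((\rho - \sqrt 2 h)^2 - 2h^2\big) > \frac{4\bar M R^2}{k};
\]
since $k \ge k_0 + K_0(h)$ and all the non-discrimination points of the diagrams are covered by $K_0(h)$ balls of radius $h$ (so those cells are "already quantized well"), the relevant comparison really is against the mass of the smallest cell, and plugging $\rho = 5\sqrt{\bar M}h/\sqrt{\pi_{min}}$ makes the left side dominate once $h$ is small enough, i.e. once all $N_\ell$ are large enough. Optimality of $\cb^*$ is contradicted, so $\min_p\|c_p^* - m_j\|_\infty \le \rho$, which is the claim.

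The main obstacle, I expect, is making part (i) of the distortion bookkeeping genuinely rigorous: when a codepoint is removed, mass must be reassigned to other codepoints, and controlling that reassignment cost requires knowing that \emph{some} codepoint of $\cb^*$ other than $c_p^*$ is not too far from cell $W_p(\cb^*)$, or else just crudely bounding the reassignment distance by $2R$ (the diameter of $\B(0,R)$) — the crude bound suffices here since we only need a bound of order $\bar M R^2/k$, but one must check the constants survive. A second delicate point is passing from the high-probability pointwise diagram-matching statement to a clean lower bound on $\E(X)(\B_\infty(m_j,h))$: the bad event where $d_H(\mathbb Y_{N_\ell},S^{(\ell)}) > h_\ell$ contributes a term that must be shown negligible relative to $\pi_{min}$, which is exactly the role of the hypothesis "$N_\ell$ large enough", and I would make this explicit by choosing the threshold for $N_\ell$ so that the failure probability is, say, below $1/2$, leaving at least $\pi_{min}/2$ of mass guaranteed near $m_j$ and adjusting the constant $5$ accordingly.
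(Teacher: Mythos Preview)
Your overall strategy --- show that $\E(X)$ concentrates mass $\gtrsim \pi_{min}$ in $\B_\infty(m_j,h)$, and derive a contradiction with optimality of $\cb^*$ if no codepoint is near $m_j$ --- is sound, and the probabilistic ingredients you invoke (persistence stability plus the Hausdorff bound on $\mathbb Y_{N_\ell}$) are exactly right. However, the contradiction mechanism you propose has a genuine scaling error.

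You compare the gain $c\,\pi_{min}\big((\rho-\sqrt2 h)^2-2h^2\big)$, which with $\rho = 5\sqrt{\bar M/\pi_{min}}\,h$ is of order $\bar M h^2$, against the loss $4\bar M R^2/k$ incurred when the removed cell's mass is reassigned. The right-hand side does \emph{not} depend on $h$: since $K_0(h)$ is bounded once $h$ is below the minimal inter-point separation in the diagrams, $k$ stays bounded as $N_\ell\to\infty$, and $4\bar M R^2/k$ remains a fixed positive constant. So your displayed inequality $c\,\pi_{min}\big((\rho-\sqrt2 h)^2-2h^2\big) > 4\bar M R^2/k$ fails precisely when $h$ is small --- the opposite of what you claim. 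Moving the least-populated codepoint is too crude: the reassignment cost is $O(R^2)$ per unit mass, not $O(h^2)$.

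The paper avoids this by a cleaner comparison that never moves a codepoint. It exhibits an explicit $(k_0+K_0(h))$-point codebook $\mathbf m=(m_1,\dots,m_{k_0+K_0(h)})$ (discrimination points plus covering centers) and bounds $F(\mathbf m)\le 6\bar M h^2$ directly, using that on the good event every atom of $X^{(\ell)}$ lies within $O(h)$ of some $m_j$. Since $k\ge k_0+K_0(h)$ and $\cb^*$ is optimal, $F(\cb^*)\le F(\mathbf m)\le 6\bar M h^2$. On the other hand, your ``gain'' computation --- which is correct --- shows that any codebook $\cb$ with $\min_p\|c_p-m_j\|_\infty>\rho$ satisfies $F(\cb)\ge \tfrac{\pi_{min}}{2}(5\sqrt{\bar M/\pi_{min}}-1)^2 h^2\ge 8\bar M h^2$. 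These two bounds already contradict each other; no relocation step, and hence no $R^2/k$ term, is needed. In short: keep your lower bound on $F(\cb^*)$, drop the codepoint-moving argument, and replace it with the direct upper bound $F(\cb^*)\le F(\mathbf m)$.
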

The proof of Proposition \ref{prop:mean_meas_closeto_disc_points} is given in Section \ref{sec:proof_prop_meas_closeto_disc_points}. If $\bar{D}_{\geq s}$ denotes the mean persistence diagram $\sum_{\ell = 1}^{L} \pi_\ell D^{(\ell)}_{\geq s}$, and $\bar{D}_{\geq s}$ has $K_0$ points, then it is immediate that $k_0 + K_0(h) \leq K_0$. Moreover, we also have $k_0 \leq \frac{L(L+1)}{2}$. Proposition \ref{prop:mean_meas_closeto_disc_points} ensures that the discrimination points are well enough approximated by optimal $k$-centers of the expected persistence diagram $\E(X)$, provided the shapes $S^{(\ell)}$ are well-enough sampled and $k$ is large enough so that $\bar{D}_{\geq s}$ is well-covered by $k$ balls with radius $h$. Note that this is always the case if we choose $k=K_0$, but also allows for smaller $k$'s.

In turn, provided that the shapes $S^{(1)}, \hdots, S^{(L)}$ are discriminable at scale $s$ and that $k$ is large enough, we can prove that an optimal $k$-points codebook $\cb^*$ is a $(p,r,\Delta)$-shattering of the sample, with high probability.
\begin{prop}\label{prop:PD_vectorization_shattering}
Assume that the requirements of Proposition \ref{prop:mean_meas_closeto_disc_points} are satisfied. Let $\tilde{B}= \min_{i=1, \hdots, k_0, j =1, \hdots, K_0, j \neq i} \| m_i - m_j \|_\infty \wedge s$. Let $\kappa >0$ be a small enough constant. Then, if $N_\ell$ is large enough for all $\ell \in [\![1,\ell]\!]$, $X_1, \hdots, X_n$ is $(p,r,1)$-shattered by $\cb^*$, with probability larger than $1- n \max_{\ell \leq L}N_\ell^{-\left ( \frac{(\kappa \tilde{B})^{d_\ell} f_{min,\ell} N_\ell}{C_\ell d_\ell \log(N_\ell)}\right)}$, provided that
\begin{itemize}
\item $\frac{r}{p} \geq 2 \kappa \tilde{B}$,
\item $4rp \leq \left( \frac{1}{2} - \kappa \right ) \tilde{B}$.
\end{itemize}
Moreover, on this probability event, $X_1, \hdots, X_n$ is $2 M \kappa \tilde{B}$-concentrated.  
\end{prop}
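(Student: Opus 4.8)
The plan is to condition on a high-probability event on which each sampling $\mathbb{Y}^{(i)}$ is dense enough in its shape, then use persistence stability to identify every $X_i$ with the limiting truncated diagram $D^{(Z_i)}_{\ge s}$ up to a small perturbation, and finally read off both conclusions from the combinatorics of the $D^{(\ell)}_{\ge s}$ together with Proposition \ref{prop:mean_meas_closeto_disc_points}. First, for $i \in [\![1,n]\!]$ put $\ell_i = Z_i$, let $\mathbb{Y}^{(i)}$ be the sampling of $S^{(\ell_i)}$ underlying $X_i$, and let $E_i$ be the event that $\mathbb{Y}^{(i)}$ is $\kappa\tilde B$-dense in $S^{(\ell_i)}$. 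Covering $S^{(\ell)}$ by $O((\kappa\tilde B)^{-d_\ell})$ balls of radius $\kappa\tilde B/2$, each of $f^{(\ell)}$-mass at least $c_{d_\ell} f_{min,\ell} (\kappa\tilde B)^{d_\ell}$, and bounding the probability that $N_\ell$ i.i.d.\ draws miss one of them, one gets --- once $N_{\ell_i}$ is large enough for the exponential term to absorb the polynomial covering-number factor --- $\Prob(E_i^c) \le N_{\ell_i}^{-(\kappa\tilde B)^{d_{\ell_i}} f_{min,\ell_i} N_{\ell_i}/(C_{\ell_i} d_{\ell_i}\log N_{\ell_i})}$, and a union bound over $i$ produces the announced failure probability $n \max_{\ell \le L} N_\ell^{-(\kappa\tilde B)^{d_\ell} f_{min,\ell} N_\ell/(C_\ell d_\ell \log N_\ell)}$. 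From now on I work on $E := \bigcap_i E_i$.

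On $E_i$ we have $\mathbb{Y}^{(i)} \subseteq S^{(\ell_i)}$ and $\sup_{x \in S^{(\ell_i)}} d(x,\mathbb{Y}^{(i)}) \le \kappa\tilde B$, hence $\|d_{\mathbb{Y}^{(i)}} - d_{S^{(\ell_i)}}\|_\infty \le \kappa\tilde B$, so the stability theorem bounds the bottleneck distance between the diagram $\hat D^{(i)}$ of $d_{\mathbb{Y}^{(i)}}$ and $D^{(\ell_i)}$ by $\kappa\tilde B$. I would then establish that, for $\kappa$ small and the $N_\ell$ large, $X_i = \hat D^{(i)}_{\ge s - h_{\ell_i}}$ is exactly $D^{(\ell_i)}_{\ge s}$ with every atom displaced by at most $\kappa\tilde B$ in $\ell_\infty$-norm, all multiplicities preserved: an atom of $D^{(\ell_i)}_{\ge s}$ has persistence $\ge s > 2\kappa\tilde B$ and is therefore matched (not to the diagonal) to an atom of $\hat D^{(i)}$ within $\ell_\infty$-distance $\kappa\tilde B$, while an atom of $X_i$ has persistence $\ge s - h_{\ell_i}$ and matches (again not the diagonal, its persistence exceeding $2\kappa\tilde B$ for $N_\ell$ large) an atom of $D^{(\ell_i)}$ of persistence $\ge s - h_{\ell_i} - 2\kappa\tilde B$, which --- since by Lemma \ref{lemma-truncated-dgm} the diagram $D^{(\ell_i)}$ has only finitely many atoms away from the diagonal, hence a gap in its persistence spectrum just below $s$, and $h_{\ell_i} + 2\kappa\tilde B$ is below that gap --- must in fact have persistence $\ge s$. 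Making this identification fully rigorous --- in particular ensuring that atoms of $D^{(\ell_i)}$ with persistence close to $s$ survive the truncation at level $s - h_{\ell_i}$ after perturbation, which in general calls for a mild regularity assumption on the scale $s$ or a slightly sharper truncation level --- is the step I expect to be the main obstacle; everything downstream is bookkeeping.

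For the shattering claim, fix $i_1 \ne i_2$ with $\ell_1 := Z_{i_1} \ne Z_{i_2} =: \ell_2$, let $m = m_{\ell_1,\ell_2}$ be their discrimination point, and (relabelling $i_1,i_2$ if needed) assume $a_1 := D^{(\ell_1)}_{\ge s}(\{m\}) > a_2 := D^{(\ell_2)}_{\ge s}(\{m\})$, so that $a_1 \ge a_2 + 1$ and $m \in D^{(\ell_1)}_{\ge s}$. By Proposition \ref{prop:mean_meas_closeto_disc_points} there is an index $q$ with $\|c^*_q - m\|_\infty \le \rho := 5\sqrt{\bar M}h/\sqrt{\pi_{min}}$, and $\rho \le \kappa\tilde B$ for the $N_\ell$ large. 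By the previous step the $a_1$ atoms of $X_{i_1}$ descending from $m$ all lie within $\ell_\infty$-distance $\rho + \kappa\tilde B \le 2\kappa\tilde B \le r/p$ of $c^*_q$, whence $X_{i_1}(\B(c^*_q, r/p)) \ge a_1$ (the harmless $\ell_\infty$-versus-$\ell_2$ discrepancy on $\R^2$ being absorbed by the slack in the hypotheses). On the other hand, any atom of $X_{i_2}$ lying in $\B(c^*_q, 4pr)$ descends from an atom $m' \in D^{(\ell_2)}_{\ge s}$ with $\|m' - m\|_\infty \le 4pr + \rho + \kappa\tilde B \le (\tfrac12 - \kappa)\tilde B + 2\kappa\tilde B < \tilde B$; since by the definition of $\tilde B$ the only atom of $\bigcup_{\ell'} D^{(\ell')}_{\ge s}$ within $\ell_\infty$-distance $\tilde B$ of the discrimination point $m$ is $m$ itself, this forces $m' = m$ and $X_{i_2}(\B(c^*_q, 4pr)) \le a_2$. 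Combining the two bounds, $X_{i_1}(\B(c^*_q, r/p)) \ge a_1 \ge a_2 + 1 \ge X_{i_2}(\B(c^*_q, 4pr)) + 1$, which is exactly the defining inequality of $(p,r,1)$-shattering for the codepoint $c^*_q$.

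For the concentration claim, take $i_1,i_2$ with $Z_{i_1} = Z_{i_2} = \ell$. The identification of the previous steps gives $X_{i_1}(H^+) = X_{i_2}(H^+) = D^{(\ell)}_{\ge s}(H^+)$, which is item (i) of Definition \ref{def:contrated}. Composing the two bottleneck matchings $X_{i_1} \leftrightarrow D^{(\ell)}_{\ge s} \leftrightarrow X_{i_2}$ --- neither of which touches the diagonal, all persistences exceeding $2\kappa\tilde B$ --- yields a bijection of the atoms of $X_{i_1}$ onto those of $X_{i_2}$ that pairs points at $\ell_\infty$-distance at most $2\kappa\tilde B$; read as a transport plan it gives $W_1(X_{i_1},X_{i_2}) \le 2\kappa\tilde B \, D^{(\ell)}_{\ge s}(H^+) \le 2M\kappa\tilde B$, which is item (ii) with $w = 2M\kappa\tilde B$. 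Hence $X_1,\dots,X_n$ is $2M\kappa\tilde B$-concentrated on $E$, and the proof is complete.
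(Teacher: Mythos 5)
Your proof follows essentially the same route as the paper's: a denseness/stability event giving $d_B(X_i, D^{(Z_i)}_{\geq s}) \leq \kappa\tilde B$ for all $i$ with the announced failure probability, then the combinatorial separation of atoms at scale $\tilde B$ around the discrimination point for the $(p,r,1)$-shattering, and the composed bottleneck matching for the $2M\kappa\tilde B$-concentration. The truncation-survival step you flag as the main obstacle is resolved in the paper exactly as you anticipate: Lemma \ref{lemma-truncated-dgm} guarantees a persistence gap $h_0>0$ just below $s$ (the paper fixes $h_0$ with $\bar{D}_{\geq s-h_0}=\bar{D}_{\geq s}$), and the constraints $\kappa \leq h_0/(2\tilde B)$ together with $N_\ell$ large enough (so that $h_\ell$ is small and $D^{(\ell)}_{\geq s-h_\ell}=D^{(\ell)}_{\geq s}$) place $h_\ell + 2\kappa\tilde B$ inside that gap, so no additional regularity assumption on $s$ is needed.
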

A proof of Proposition \ref{prop:PD_vectorization_shattering} is given in Section \ref{sec:proof_prop_PD_vectorization_shattering}. In turn, Proposition \ref{prop:PD_vectorization_shattering} can be combined with Proposition \ref{prop:well_clustered} and Corollary \ref{cor:vectorization_Lloyd} to provide guarantees on the output of Algorithm \ref{lst:lloyd} combined with a suitable kernel. We choose to give results for the theoretical kernel $\psi_0:x \mapsto (1- ((x-1) \vee 0)) \vee 0$, and for the kernel used in \cite{Royer19}, $\psi_{AT} = x \mapsto \exp(-x)$.
\begin{corollary}\label{cor:PD_vectorization_clustering}
Assume that $\E(X)$ satisfies a margin condition, and that the requirements of Proposition \ref{prop:PD_vectorization_shattering} are satisfied. For short, denote by $v_i$ the vectorization of $X_i$ based on the output of Algorithm \ref{lst:lloyd}. Then, with probability larger than $1 - \exp{\left [-C\left ( \frac{n r^2 p_{min}^2}{p^2M^2R^2k^2 d \log(k)} - \frac{p_{min}^2 B^2 r_0^2}{M^2 R^4 k^2 d \log(k)} \right ) \right ]} - n \max_{\ell \leq L}N_\ell^{-\left ( \frac{(\kappa \tilde{B})^{d_\ell} f_{min,\ell} N_\ell}{C_\ell d_\ell \log(N_\ell)}\right)} $, where $\kappa$ and $C$ are small enough constants, we have
\[
\begin{array}{@{}ccc}
Z_{i_1}=Z_{i_2} & \Rightarrow & \| v_{i_1} - v_{i_2} \|_\infty \leq \frac{1}{4}, \\
Z_{i_1} \neq Z_{i_2} & \Rightarrow & \| v_{i_1} - v_{i_2} \|_\infty \geq \frac{1}{2},
\end{array}
\]
for $\sigma \in [r,2r]$ and the following choices of $p$ and $r$:
\begin{itemize}
\item If $\Psi = \Psi_{AT}$, $p_{AT} = \lceil 4M  \rceil$, and $r_{AT} = \frac{\tilde{B}}{32 p_{AT}}$.
\item If $\Psi = \Psi_0$, $p_0=1$ and $r_0 = \frac{\tilde{B}}{32}$.
\end{itemize}
\end{corollary}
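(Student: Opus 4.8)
The strategy is to chain Proposition~\ref{prop:PD_vectorization_shattering}, Corollary~\ref{cor:vectorization_Lloyd} and Proposition~\ref{prop:well_clustered}, the only real work being to track the constraints on the constant $\kappa$. Because Corollary~\ref{cor:vectorization_Lloyd} turns a $(p,r,\Delta)$-shattering by $\cb^*$ into only a $(\lfloor p/2\rfloor,r,\Delta)$-shattering by the Lloyd output $\hat{\cb}_n$, the first step is to invoke Proposition~\ref{prop:PD_vectorization_shattering} with \emph{twice} the target exponent: take $(p,r)=(2p_{AT},r_{AT})$ in the $\Psi_{AT}$ case and $(p,r)=(2,r_0)$ in the $\Psi_0$ case. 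One then checks the two bullet hypotheses $r/p\geq 2\kappa\tilde B$ and $4rp\leq(1/2-\kappa)\tilde B$ of that proposition: with $r_{AT}=\tilde B/(32p_{AT})$ one gets $r_{AT}/(2p_{AT})=\tilde B/(64p_{AT}^2)$ and $4r_{AT}(2p_{AT})=\tilde B/4$, so it suffices that $\kappa\leq 1/(128p_{AT}^2)$, and the $\Psi_0$ case is identical with $p_{AT}$ replaced by $1$. Proposition~\ref{prop:PD_vectorization_shattering} thus gives, on an event $\Omega_1$ of probability at least $1-n\max_{\ell\leq L}N_\ell^{-(\cdots)}$ (the exponent being exactly the one in the statement), that $\cb^*$ is a $(2p_{AT},r_{AT},1)$- (resp. $(2,r_0,1)$-) shattering of $X_1,\dots,X_n$ and that the sample is $2M\kappa\tilde B$-concentrated.

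Next I would apply Corollary~\ref{cor:vectorization_Lloyd}, whose hypotheses --- a margin condition on $\E(X)$ (assumed) and a $(p,r,\Delta)$-shattering by $\cb^*$ with $p\geq 2$ (just obtained, with $\Delta=1$ and $p=2p_{AT}\geq 2$ or $p=2$) --- are now met; note that the finite-support requirement inherited from Theorem~\ref{thm:optLloyd} holds automatically here by Lemma~\ref{lemma-truncated-dgm}. Its conclusion is that $\hat{\cb}_n$ is a $(\lfloor 2p_{AT}/2\rfloor,r_{AT},1)=(p_{AT},r_{AT},1)$- (resp. $(1,r_0,1)$-) shattering of the sample, on an event $\Omega_2$ of probability at least $1-\exp[-C(\cdots)]$; the spurious factor $2$ in the shattering exponent of $\cb^*$ only rescales the constant $C$, which is why the probability in the statement is written with $p=p_{AT}$ (resp. $p=1$).

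Finally, on $\Omega_1\cap\Omega_2$ I would apply Proposition~\ref{prop:well_clustered}, checking its three inputs: the sample is $(p,r,1)$-shattered by $\hat{\cb}_n$ (done); it is $(r/4)$-concentrated, which follows from $2M\kappa\tilde B\leq r_{AT}/4=\tilde B/(128p_{AT})$, i.e. from $\kappa\leq 1/(256Mp_{AT})$ (and $\kappa\leq 1/(256M)$ in the $\Psi_0$ case); and the kernel is a $(p,\delta)$-kernel with $\delta\leq 1/(4M)$ --- as recalled after Definition~\ref{def:kernel_functions}, $\Psi_{AT}:x\mapsto\exp(-x)$ is a $(q,1/q)$-kernel for every $q\in\mathbb N^*$, so with $p_{AT}=\lceil 4M\rceil$ it is a $(p_{AT},1/p_{AT})$-kernel with $1/p_{AT}\leq 1/(4M)$, while $\Psi_0$ is a $(1,0)$-kernel, hence $\delta=0\leq 1/(4M)$. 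Proposition~\ref{prop:well_clustered} then yields, for $\sigma\in[r,2r]$, $\|v_{i_1}-v_{i_2}\|_\infty\leq\Delta/4=1/4$ when $Z_{i_1}=Z_{i_2}$ and $\geq\Delta/2=1/2$ when $Z_{i_1}\neq Z_{i_2}$, and a union bound over $\Omega_1$ and $\Omega_2$ gives the stated probability. The one mildly delicate aspect is purely bookkeeping: $\kappa$ must be taken below the minimum of the several thresholds above, all of which are explicit functions of $M$ through $p_{AT}=\lceil 4M\rceil$ (together with whatever Proposition~\ref{prop:PD_vectorization_shattering} already requires), and no new estimate is needed.
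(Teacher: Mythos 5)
Your proposal is correct and follows essentially the same route as the paper's own proof: invoke Proposition~\ref{prop:PD_vectorization_shattering} with doubled exponent $2p_{AT}$ (resp.\ $2$), pass through Corollary~\ref{cor:vectorization_Lloyd} to get a $(p_{AT},r_{AT},1)$-shattering by $\hat{\cb}_n$, check the concentration and kernel conditions, and conclude via Proposition~\ref{prop:well_clustered}. Your bookkeeping of the explicit thresholds on $\kappa$ is in fact slightly more careful than the paper's (which writes $2\kappa\tilde B$ where Proposition~\ref{prop:PD_vectorization_shattering} gives $2M\kappa\tilde B$-concentration), so nothing is missing.
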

A proof of Corollary \ref{cor:PD_vectorization_clustering} is given in Section \ref{sec:proof_cor_PD_vectorization_clustering}. Corollary \ref{cor:PD_vectorization_clustering} can be turned into probability bounds on the exactness of the output of hierarchical clustering schemes applied to the sample points. For instance, on the probability event described by Corollary \ref{cor:PD_vectorization_clustering}, Single Linkage with norm $\|.\|_\infty$ will provide an exact clustering. The probability bound in Corollary \ref{cor:PD_vectorization_clustering} shed some light on the quality of sampling of each shape that is required to achieve a perfect classification: roughly, for $N_\ell$ in $\Omega(\log(n))$, the probability of misclassification can be controlled. Note that though the key parameter $\tilde{B}$ is not known, in practice it can be scaled as several times the minimum distance between two points of a diagram.  

Investigating whether the margin condition required by Corollary \ref{cor:PD_vectorization_clustering} is satisfied in the mixture of shapes framework is beyond the scope of the paper. This condition is needed to prove that the algorithms exposed in Section \ref{sec:Algorithms} provide provably good approximations of optimal $\cb^*$'s (those in turn can be proved to be discriminative codebooks). Experimental results below asses the validity of our algorithms in practice.

\section{Experimental results}\label{sec:Expes}

This section investigates numerical performances of our vectorization and clustering procedure. The vectorization step is carried out using  the \textsc{Atol} procedure (Automatic Topologically-Oriented Learning, \cite{Royer19}): codebooks are defined using Algorithm \ref{lst:macqueen}, and an exponential kernel $\Psi_{AT}$ is used to construct the embedding as in \eqref{eq:vectorization}.  As a remark, Algorithm \ref{lst:lloyd} combined with other kernels such as $\psi_0$ lead to similar results in our three experiments.

It is worth noting that our method requires few to no tuning: for all experiments we use minibatches of size 1000 and the same calibration scheme (use a random 10\% of the sample measures for learning the space quantization). We use the same kernel $\Psi_{AT}$, only the \textsc{Atol} budget $k$ (length of the vectorization) will sometimes vary in the three experiments. 

Our numerical experiments encompass synthetic measure clustering, but also investigate the performance of our vectorization technique on two (supervised) classification problems: large-scale graph classification and text classification. On these problems, we achieve state-of-the-art performances.

\subsection{Measure clustering}

In this first experiment, we settle in the framework of Section \ref{sec:measures_clustering}. Namely, we consider an i.i.d sample drawn from a mixture of measures, and evaluate our method on a clustering task. 
We generate a synthetic mixture of $L$ measures. We first choose $p-1$ centers shared by all mixture components on the $d-1$-dimensional sphere of radius $10$ and denote them by $\{S_d^{i} \mid i=1, \hdots, p-1\}$; then for each mixture component another center is placed at a separate vertex of the $d$-dimensional unit cube (implying $L \leqslant 2^d$), labeled $\text{Cube}_{\ell|d}$ for $\ell\in [\![1, \hdots, L]\!]$, so that the inner center is the only center that varies amongst mixture components, see instances Figure \ref{fig:setup}. The \textit{support centers} $C^\ell$ for the mixture component $\ell$ are given by
\begin{align}
C^\ell = \{ S_d^{i} \mid i=1, \hdots, p-1\} \cup \text{Cube}_{\ell|d}.
\end{align}
For every mixture component and signal level $r>0$, we then make $N$ normal simultaneous draws with variance $1$ centered around every element of  $r \times C^\ell$, resulting in a point cloud of cardinality $p \times N$ that is interpreted as a measure. To sum up, the $\ell$-th mixture component $X^k$ has distribution
\[
X^\ell = \bigcup_{c \in C^\ell} \bigcup_{i=1, \hdots, N} \{ rc + \varepsilon_{c,i} \}, 
\]
where the $\varepsilon$'s are i.i.d standard $d$-dimensional Gaussian random variables. 
%Finally the space is arbitrarily rotated so as to not favor a specific axis configuration.

We compare with several measure clustering procedures, some of them being based on alternative vectorizations. Namely, the \texttt{rand} procedure randomly chooses codepoints among the sample measures points, whereas the \texttt{grid} procedure takes as codepoints a regular grid of $[0,10r]^d$. For fair comparison we use the same kernel $\Psi_{AT}$ as in \textsc{Atol} to vectorize from these two codebooks.

We also compare with two other standard clustering procedures: the \texttt{histogram} procedure, that learns a regular tiling of the space and vectorizes measures by counting inside the tiles, and the \texttt{W2-spectral} method that is just spectral clustering based on the $W_2$ distance (to build the Gram matrix). Note that the \texttt{histogram} procedure is conceptually close to the \texttt{grid} procedure.

At last, for all methods the final clustering step consists in a standard k-means clustering with 100 initializations.

\begin{figure}[h]
	\centering
	\includegraphics[width=.45\columnwidth]{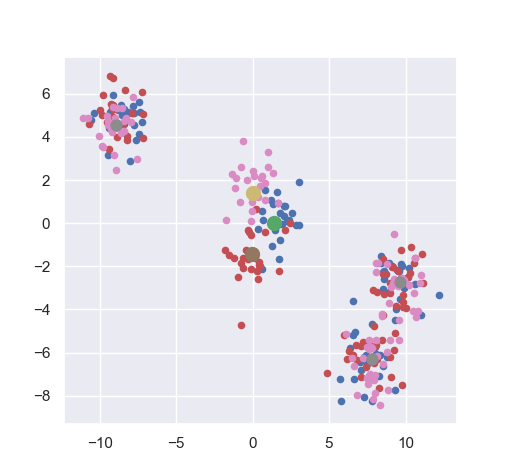}
	\includegraphics[width=.45\columnwidth]{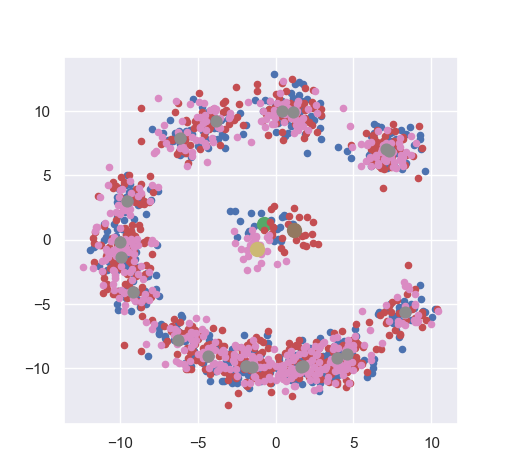}
	\caption{Synthetic 3-mixture of measures instance (blue, dark brown and pink) and their \textit{support centers} for indication (dark green for the outer centers, yellow, green and brown for the middle centers) with either 4 (left) or 20 (right) \textit{support centers} in dimension 2, $r=1$. The sets of points similarly coloured constitute measures.}
	\label{fig:setup}
\end{figure}

We designed these mixtures to exemplify problems where the data are significantly separated (by $L$ discriminative points corresponding to hypercube vertices), but also very similar by including draws from the same set of centers on the outer sphere. In this experiment $r$ may be thought of as the strength of the signal separation, and $p$ can be interpreted as a measure of noise. Empirically we use two different setups for the number of supporting centers $p$: $p=4$ or $p=20$, that we interpret respectively as low and high noise situations in Figure \ref{fig:setup}. We fix $L=3$, and generate measures in dimension $2$ and $5$. We also set $N=25$ and sample every mixture component $20$ times, so that the final sample of measures consist of $60$ i.i.d. finite measures, each with $25\times p$ points.

Regardless of the dimension, for the $p=4$ setup, \texttt{W2-spectral} takes about 4.3 seconds on average, 0.2 seconds for \textsc{Atol}, and the other methods are naturally faster. Note that for the $p=20$ case, the \texttt{W2-spectral} clustering procedure runs takes too long to compute and is not reported, whereas \textsc{Atol} runs in $0.5$ seconds on average. Each experiment is performed identically a hundred times and the resulting average \textit{normalized mutual information} (NMI) with true labels and $95\%$ confidence intervals are presented for each method.
%[Note on those specific choices: meant to give trend, exemplify]
Two sets of experiments are performed, corresponding to the following two questions.

\textbf{Q1}: at a given signal level $r=1$, for increasing budget (that is, the size of the codebooks used in the vectorization process), how accurate are methods at clustering 3-mixtures? Q1 aims at comparing the various methods potentials at capturing the underlying signal with respect to budget.

Results are displayed in Figure \ref{fig:mmc-budget}. The \textsc{Atol} procedure is shown to produce the best results in almost all situations. Namely, it is the procedure that will most likely cluster the mixture exactly while requiring the least amount of budget for doing so. The grid procedures \texttt{grid} and \texttt{histogram} naturally suffer from the dimension growth (bottom panels). The oscillations of the \texttt{histogram} procedure performances, exposed in the upper right panel,  may be due to the existence of a center tile (that cannot discriminate between several vertices of the cube) that our oversimple implementation allows. To draw a fair comparison, the local minima of the \texttt{histogram} curve are not considered as representative, leading to an overall flat behavior of this method in the case $d=2, p=20$. In dimension 2 the \texttt{grid} procedure shows to be very efficient, but for measures in dimension 5 it seems always better to select codepoints at random, rather than using some instance of regular grid. Whenever possible, \texttt{W2-spectral} outperforms all vectorization-based methods, at the price of a significantly larger computational cost. Note however that the performance gap with \textsc{Atol} is discernable for small budgets only ($k \leq 15$).
%{\color{red}Finally the \texttt{histogram} procedure suffers from scale effects, with performances \textit{decreasing} as the budget increases, which highlights the rigidity of the counting kernel compared to using kernels such as $\Psi_{AT}$.}

\begin{figure}[h!]
	\centering
	\includegraphics[width= \columnwidth]{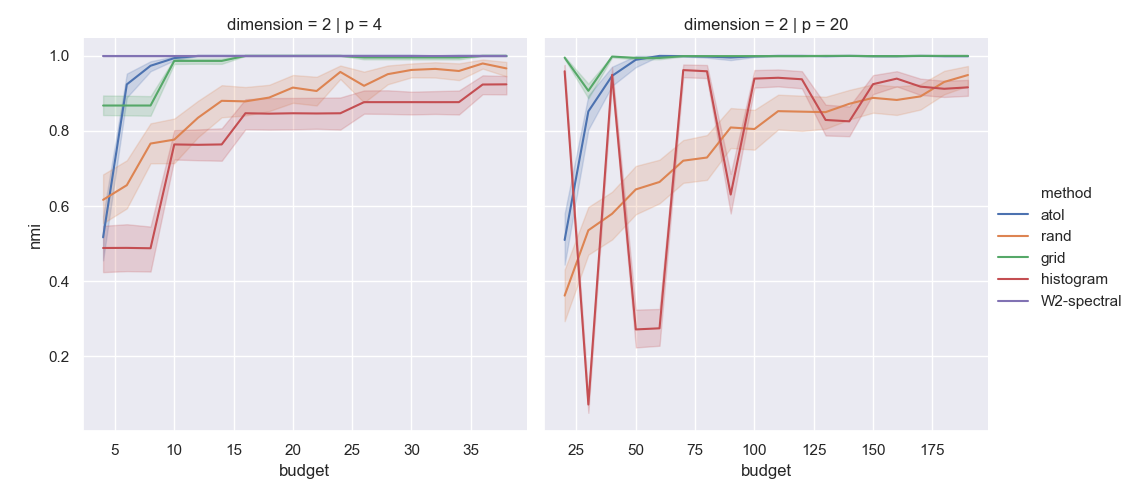}
	\includegraphics[width= \columnwidth]{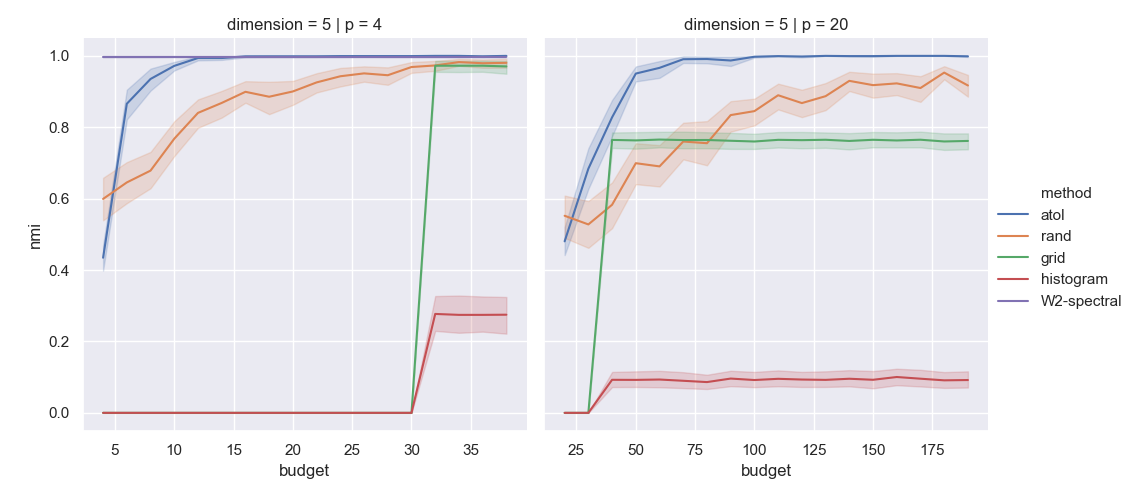}
	\caption{NMI as budget increases, $r=1$, increasing dimensions along rows, low noise and high noise contexts (left and right columns)}
	\label{fig:mmc-budget}
\end{figure}

\textbf{Q2}: for a given budget $k=32$ and increasing signal level, how accurate are methods at clustering 3-mixtures? Q2 investigates how the various methods fare with respect to signal strength at a given, low codebook length $k=32$, the most favorably low budget for regular grid procedures in dimension 5.

Results are shown in Figure \ref{fig:mmc-snr}. \textsc{Atol} and \texttt{grid} procedures give similar results, and compare well to \texttt{W2-spectral} in the case $p=4$. Figure \ref{fig:mmc-snr} also confirms the intuition that the \texttt{rand} procedure is more sensitive to $p$ than competitors, resulting in significantly weaker performances when $p=20$.

\begin{figure}[h!]
	\centering
	\includegraphics[width= \columnwidth]{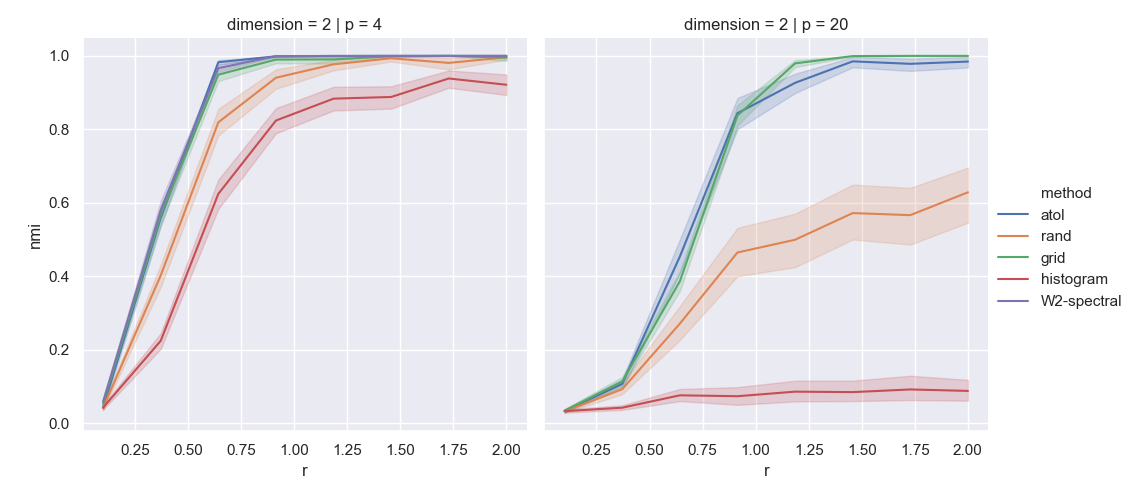}
	\includegraphics[width= \columnwidth]{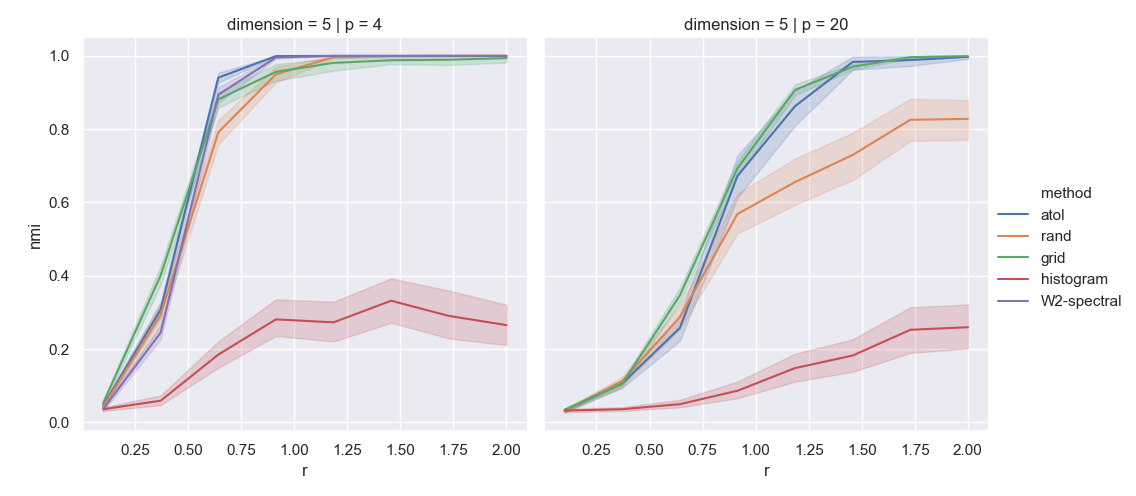}
	\caption{NMI as $r$ increases, budget $k = 32$, increasing dimensions along rows, low noise and high noise contexts (left and right columns)}
	\label{fig:mmc-snr}
\end{figure}

\subsection{Large-scale graph classification}

We now look at the large-scale graph classification problem. Provided that graphs may be considered as measures, our vectorization method provides an embedding into $\R^k$ that may be combined with standard learning algorithms. In this framework, our vectorization procedure can be thought of as a dimensionality reduction technique for supervised learning, that is performed in an unsupervised way. Note that the notion of shattering codebook introduced in Section \ref{sec:measures_clustering} is still relevant in this context, since a $(p,r,\Delta)$-shattering codebook would lead to exact sample classification if combined with a classification tree for instance. Theoretically investigating the generalization error of such an approach is beyond the scope of the paper. 
    
As exposed above, since \textsc{Atol} is an unsupervised procedure that is not specifically conceived for graphs, it is likely that other dedicated techniques would provide better performances in this graph classification problem. However, its simplicity can be a competitive advantage for large-scale applications. 

There are multiple ways to interpret graphs as measures. In this section we borrow from \cite{Carriere19}: for a  diffusion time $t>0$ we compute the Heat Kernel Signatures (HKS$_t$) for all vertices in a graph, so that each graph $G(V, E)$ is embedded in $\mathbb{R}^{|V|}$ (see, e.g., \cite[Section 2.2]{Carriere19}). Then four graph descriptors per diffusion time may be computed using the extended persistence framework (see, e.g.,  \cite[Section 2.1]{Carriere19}), that is four types of persistence diagrams (PDs). Schematically for a graph $G(V, E)$ the descriptors are derived as:
\begin{align}
G(V, E) &\xrightarrow[\text{signatures}]{\text{heat kernel}} \text{HKS}_{t}(G) \in \mathbb{R}^{|V|}, \notag \\
	G(V, E) &\xrightarrow[\text{persistence}]{\text{extended}} \text{PD}(\text{HKS}_{t}(G), G) \in (\mathcal{M}(\mathbb{R}^2))^{4}.	\label{eq:graphpds}
\end{align}

In these experiments we will show that the graph embedding strategy of \eqref{eq:graphpds} paired with \textsc{Atol} can perform up to the state of the art on large-scale graph classification problems. We will also demonstrate that \textsc{Atol} is well assisted but not dependent on the aforementioned TDA tools.

\noindent
\textbf{Large-scale binary classification from \cite{rozemberczki2020api}}

Recently \cite{rozemberczki2020api} introduced large-scale graph datasets of social or web origin. For each dataset the associated task is binary classification. The authors perform a 80\% train/test split of the data and report mean area under the curve (AUC) along with standard errors over a hundred experiments for all the following graph embedding methods. SF from \cite{lara2018simple} is a simple graph embedding method that extracts the $k$ lowest spectral values from the graph Laplacian, and a standard Random Forest Classifier (RFC) for classification. NetLSD from \cite{tsitsulin2018} uses a more refined representation of the graph Laplacian, the heat trace signature (a global variant of the HKS) of a graph using 250 diffusion times, and a 1-layer neural network (NN) for classification. FGSD from \cite{verma2017hunt} computes the biharmonic spectral distance of a graph and uses histogram vectorization with small binwidths that results in vectorization of size $[100, 1000000]$, with a Support Vector Machine (SVM) classifier. GeoScattering from \cite{gao19} uses graph wavelet transform to produce 125 graph embedding features, also with a SVM classifier.

We add our own results for \textsc{Atol} paired with \eqref{eq:graphpds}: we use the extended persistence diagrams as input for \textsc{Atol} computed from the HKS values with diffusion times $t_1=.1, t_2=10$, and vectorize the diagrams with budget $k=10$ for each diagram type and diffusion time so that the resulting vectorization for graph $G$ is $v_{\textsc{Atol}}(G) \in \mathbb{R}^{2 \times 4 \times 10}$. We then train a standard RFC (as in \cite{lara2018simple}, we use the implementation from \texttt{sklearn} \cite{sklearn} with all default parameters) on the resulting vectorized measures.

\begin{table*}
	\resizebox{\columnwidth}{!}{ % change to \columnwidth
		\begin{tabular}{|lr|rrrr|r|}
			\hline
			\hspace{1.2cm} method && SF & NetLSD & FGSD & GeoScat & \textsc{Atol} \\
			problem & size & \cite{lara2018simple} & \cite{tsitsulin2018} & \cite{verma2017hunt} & \cite{gao19} & \\
			\hline
			% Social network datasets
			$\texttt{reddit threads}$ & (203K) & {81.4}$\pm$.2 & \textbf{82.7$\pm$.1} & 82.5$\pm$.2 & 80.0$\pm$.1 & 80.7$\pm$.1 \\
			$\texttt{twitch egos}$ & (127K) & 67.8$\pm$.3 & {63.1}$\pm$.2 & \textbf{70.5$\pm$.3} & {69.7$\pm$.1} & 69.7$\pm$.1 \\
			$\texttt{github stargazers}$ & (12.7K) & {55.8}$\pm$.1 & 63.2$\pm$.1 & {65.6$\pm$.1} & 54.6$\pm$.3 & \textbf{72.3$\pm$.4} \\
			$\texttt{deezer ego nets}$ & (9.6K) & 50.1$\pm$.1 & 52.2$\pm$.1 & \textbf{52.6$\pm$.1} & 52.2$\pm$.3 & {51.0$\pm$.6} \\
			\hline
		\end{tabular}
	}
	\caption{Large graph binary classification problems. Mean ROC-AUC and standard deviations.}
	\label{tab:snap_graphs}
\end{table*}

The results are shown Table \ref{tab:snap_graphs}. \textsc{Atol} is close to or over the state-of-the-art for all four datasets. Most of these methods operate directly from graph Laplacians so they are fairly comparable, in essence or as for the dimension of the embedding that is used. The most positive results on \texttt{github stargazers} improves on the best method by more than 6 points. 
%Overall, using the \textsc{Atol} vectorization scheme on large amounts of graphs proves immediately beneficial, and readily competitive for handling large-scale graph datasets from the measure viewpoint.

\noindent
\textbf{A variant of the large-scale graph classification from \cite{Royer19}}

The graph classification tasks above were binary classifications. \cite{Yanardag15} introduced now popular datasets of large-scale graphs associated with multiple classes. These datasets have been tackled with top performant graph methods including the graph kernel methods RetGK from \cite{zhang2018retgk}, WKPI from \cite{qi2019} and GNTK from \cite{du19} (combined with a graph neural network), and the aforementioned graph embedding method FGSD from \cite{verma2017hunt}. PersLay from \cite{Carriere19} is not designed for graphs but used \eqref{eq:graphpds} as input for a 1-NN classifier. Lastly, in \cite{Royer19}, the authors also used \eqref{eq:graphpds} and the same diagrams as input data for the \textsc{Atol} procedure and obtained competitive results within reasonable computation times (the largest is $110$ seconds, for the \texttt{COLLAB} dataset, see \cite[Section 3.1]{Royer19} for all computation times).

%We now make a variation from this perspective and show that in the case of large-scale graphs, \textsc{Atol} produces state-of-the-art results that are not dependent on using any specific TDA tools - even though we note that the quantization step of \textsc{Atol} is inherently topological.
Here we propose to bypass the topological feature extraction step in \eqref{eq:graphpds}. Instead of topological descriptors, we use simpler graph descriptors: we compute four HKS descriptors corresponding to diffusion times $t_1=.1, t_2=1, t_3=10, t_4=100$ for all vertices in a graph, but this time directly interpret the output as a measure embedding in dimension 4. From there we use \textsc{Atol} with $k=80$ budget. Therefore each graph $G(V, E)$ is embedded in $\mathbb{R}^{4|V|}$ seen as $\mathcal{M}(\mathbb{R}^4)$ and our measure vectorization framework is readily applicable from there. To sum-up we now use the point of view:

\begin{align}
G(V, E) \xrightarrow[\text{signatures}]{\text{heat kernel}} \text{HKS}_{t_1, t_2, t_3, t_4}(G) \in \mathbb{R}^{4|V|} \approx \mathcal{M}(\mathbb{R}^4).
\label{eq:graphdirect}
\end{align}

%As an important comment we make the following point that any type of node or edge embedding, including those that can be efficiently computed on large graphs, could readily be used instead of the HKS embedding employed here. We do not claim that this particular embedding is the correct way to handle graphs in general, much to the contrary: instead we simply show that our methodology can handle all sort of embedding strategies, with good results to support it for  the exposed experiments.
It is important to note that the proposed transformation from graphs to measures can be computed using any type of node or edge embedding. Our vectorization method could be applied the same way as for HKS's. 

\begin{table*}[!h]
	\resizebox{\columnwidth}{!}{ % change to \columnwidth
		\begin{tabular}{|l|rrrrrr|r|}
			\hline
			\hspace{1.2cm} method & RetGK & FGSD & WKPI & GNTK & PersLay & \textsc{Atol} & \textsc{Atol} \\
			problem & \cite{zhang2018retgk} & \cite{verma2017hunt} & \cite{qi2019} & \cite{du19} & \cite{Carriere19} & with \eqref{eq:graphpds} \cite{Royer19} & with \eqref{eq:graphdirect} \\
			\hline
			% Social network datasets
			$\texttt{REDDIT}$ (5K, 5 classes)       & 56.1$\pm$.5 & 47.8 & {59.5$\pm$.6} & --- & {55.6}$\pm$.3 & \textbf{67.1$\pm$.3} & \textbf{66.1$\pm$.2} \\
			$\texttt{REDDIT}$ (12K, 11 classes)      & {48.7}$\pm$.2 & ---  & {48.5$\pm$.5} & --- & {47.7}$\pm$.2 & \textbf{51.4$\pm$.2} & \textbf{50.7$\pm$.3} \\
			$\texttt{COLLAB}$ (5K, 3 classes)         & {81.0}$\pm$.3 & 80.0 & --- & 83.6$\pm$.1 & 76.4$\pm$.4 & \textbf{88.3$\pm$.2} & \textbf{88.5$\pm$.1} \\
			$\texttt{IMDB-B}$ (1K, 2 classes)        & 71.9$\pm$1. & {73.6} & 75.1$\pm$1.1 & \textbf{76.9$\pm$3.6} & 71.2$\pm$.7 & 74.8$\pm$.3 & 73.9$\pm$.5 \\
			$\texttt{IMDB-M}$ (1.5K, 3 classes)      & 47.7$\pm$.3 & \textbf{52.4} & 48.4$\pm$.5 & \textbf{52.8$\pm$4.6} & 48.8$\pm$.6 & 47.8$\pm$.7 & 47.0$\pm$.5 \\
			\hline
		\end{tabular}
	}
	\caption{Mean accuracies and standard deviations for the large multi-classes problems of \cite{Yanardag15}.}
	\label{tab:res_social}
\end{table*}

On Table \ref{tab:res_social} we quote results and competitors from \cite{Royer19} and on the right column we add our own experiment with \textsc{Atol}. The \textsc{Atol} methodology works very efficiently with the direct HKS embedding as well, although the results tend to be slightly inferior. This may hint at the fact that although PDs are not essential to capturing signal from this dataset, they can be a significant addition for doing so. 
Overall \textsc{Atol} with \eqref{eq:graphdirect}, though much lighter than \textsc{Atol} with \eqref{eq:graphpds}, remains competitive for large-scale graph multiclass classification.
%Overall the \textsc{Atol} framework is competitive for multiclass classification of large-scale graph datasets. It does not rely solely on specific TDA tools and is adaptable to handle all forms of measure interpretations.

%@Martin: make a note on scaling?, that standard scaling on data makes no difference as the scales are implicitly learnt through the kernels sigmas. Of course it would make a difference with a 0-1 scaling but that is not our topic/the appropriate thing is to pre-process the data or design the quantization/kernel method with respect to some type of data in mind?

\subsection{Text classification with word embedding}

At last we intend to apply our methodology in a high-dimensional framework, namely text classification.
Basically, texts are sequences of words and if one forgets about word order, a text can be thought of as a measure in the (fairly unstructured) space of words. We use the \texttt{word2vec} word embedding technique (\cite{Mikolov13}), that uses a two-layer neural network to learn a real-valued vector representation for each word in a dictionary in such a way that distances between words are learnt to reflect the semantic closeness between them, with respect to the corpus. Therefore, for a given dimension $E \in \mathbb{N}$, every word $w$ is mapped to the embedding space $v_{\texttt{word2vec}}(w) \in \mathbb{R}^E$, and we can use word embedding to interpret texts as measures in a given word embedding space:
\begin{align}
T \in \mathcal{M}(\{\text{words}\}) \xrightarrow[\text{embedding}]{\text{word}} T_{\texttt{word2vec}} = \big[ v_{\texttt{word2vec}}(w) \big]_{w \in T} \in \mathcal{M}(\mathbb{R}^{E}).
\label{eq:embedding}
\end{align}

In practice we will use the Large Movie Review Dataset from \cite{imdb} for our text corpus, and learn word embedding on this corpus. To ease the intuition, Figure \ref{fig:imdbquant} below depicts the codepoints provided by Algorithm \ref{lst:macqueen}, based on a $2$-dimensional word embedding. Note that this word embedding step does not depend on the classification task that will follow.

\begin{figure}[h!]
	\centering
	\includegraphics[width= \columnwidth]{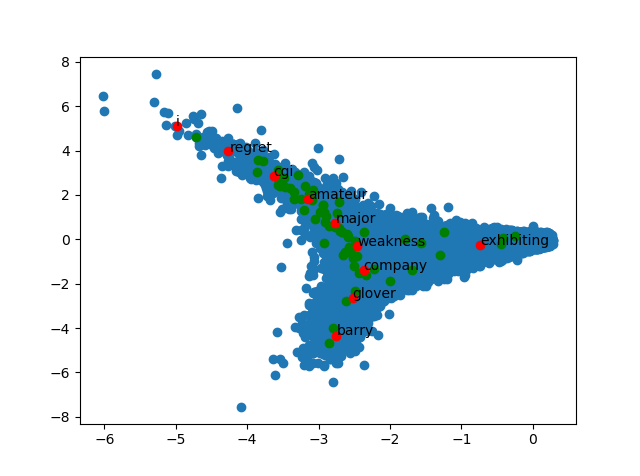}
	\caption{Blue: point cloud embedding in dimension 2 for the words of all reviews; green: point cloud embedding for the words of the first review ("Story of a man who has unnatural feelings for a pig. [...]"); red: codepoints computed by Algorithm \ref{lst:macqueen}, with the text print of their closest corresponding word.}
	\label{fig:imdbquant}
\end{figure}

We now turn to the task of classification and use the binary sentiment annotation (positive or negative reviews) from the dataset. For classifying texts with modern machine-learning techniques there are several problems to overcome: important words in a text can be found at almost any place, texts usually have different sizes so that they have to be padded in some way, etc. The measure viewpoint is a direct and simple solution.

The successful way to proceed after a word embedding step is to use some form of deep neural network learning. We learn a $100$-dimensional word embedding using the \texttt{word2vec} implementation from the \texttt{gensim} module, then compare the following classification methods: directly run a recurrent neural network (LSTM with 64 units), against the \textsc{Atol} vectorization followed by a simple dense layer with 32 units. We measure accuracies through a single 10-fold.

The \textsc{Atol} vectorization with word embedding in dimension 100, $20$ centers, and 1-NN classifier reaches 85.6 accuracy with .95 standard deviation. The average quantization and vectorization times are respectively 5.5 and 208.3 seconds. The recurrent neural network alternative with word embedding in dimension 100 reaches 89.3 mean accuracy with .44 standard deviation, for an average run time of about $1$ hour. 

Naturally the results are overall greatly hindered by the fact that we have treated texts as a separate collection of words, forgetting sentence structure when most competitive methods use complex neural networks to analyse $n$-uplets of words as additional inputs. Additional precision can be also gained using a task-dependend embedding of words, at the price of a larger computation time (see for instance the \texttt{kaggle} winner algorithm \cite{Cherniuk19}, with precision $99\%$ and run time $10379$ seconds).

\section{Proofs for Section \ref{sec:measures_clustering_vectorization}}
\label{sec:proof_sec_clustering_based_on_vectorization}
\subsection{Proof of Proposition \ref{prop:vectorization_scattering}}\label{sec:proof_prop_vectorization_scattering}  
Assume that $X_1, \hdots, X_n$ is $(p,r,\Delta)$-shattered by $\cb$, let $i_1, i_2$ in $[\![1,n]\!]$ be such that $Z_{i_1} \neq Z_{i_2}$, and without loss of generality assume that
\[
X_{i_1}(\B(c_1,r/p)) \geq  X_{i_2}(\B(c_1,4rp)) + \Delta.
\]
Let $\Psi$ be a $(p,\delta)$-kernel and $\sigma \in [r,2r]$. We have
\begin{align*}
\int \Psi(\|u-c_1\|/\sigma) X_{i_1}(du) & \geq  \int\Psi(\|u-c_1\|/\sigma) \mathds{1}_{\B(c_1,r/p)}(u) X_{i_1}(du) \\
& \geq (1-\delta) X_{i_1}\left(\B(c_1,r/p)\right )\\
& \geq X_{i_1}\left(\B(c_1,r/p) \right ) - \delta M. 
\end{align*}
On the other hand, we have that
\begin{align*}
 \int \Psi(\|u-c_1\|/\sigma)X_{i_2}(du) \leq & \begin{multlined}[t]  \int \Psi(\|u-c_1\|/\sigma) \mathds{1}_{\B(c_1,4pr)}  X_{i_2}(du) \\ +  \int \Psi(\|u-c_1\|/\sigma) \mathds{1}_{(\B(c_1,4pr))^c} X_{i_2}(du)
\end{multlined} \\
& \leq X_{i_2}(\B(c_1,4pr)) + \delta X_{i_2}((\B(c_1,4pr))^c) \\
& \leq X_{i_1}\left(\B(c_1,r/p) \right ) - \Delta + \delta M.
\end{align*}
We deduce that $\|v_{\cb,\sigma}(X_{i_1}) -v_{\cb,\sigma}(X_{i_2}) \|_\infty \geq \Delta - 2 \delta M \geq \frac{\Delta}{2}$
whenever $\delta \leq \frac{\Delta}{4M}$.

\subsection{Proof of Proposition \ref{prop:well_clustered}}\label{sec:proof_prop_well_clustered}
Let $i_1$, $i_2$ in $[\![1,n]\!]$ such that $Z_{i_1} = Z_{i_2}$. Let $(Y_1,Y_2)$ be a random vector such that $Y_1 \sim X_{i_1}$, $Y_2 \sim X_{i_2}$, and $\mathbb{E}(\|Y_1-Y_2\|) \leq w$. Let $c \in \B(0,R)$, we have
\begin{multline*}
\left |  \int \Psi(\|u-c\|/\sigma) X_{i_1}(du) -  \int \Psi(\|u-c\|/\sigma) X_{i_2}(du) \right | \\
\begin{aligned}[t]
& \leq \left | \mathbb{E}\left [ \Psi(\|Y_1 - c\|/\sigma) - \Psi(\|Y_2 - c\|/\sigma) \right ] \right | \\
& \leq \E \left ( \frac{\|Y_1-Y_2\|}{\sigma} \right ) \leq \frac{w}{\sigma}, 
\end{aligned}
\end{multline*} 
hence $\|v_{\cb,\sigma}(X_{i_1})-v_{\cb,\sigma}(X_{i_2})\|_\infty \leq w/\sigma$. Now if $X_1 \hdots, X_n$ is $r \Delta/4$-concentrated, and $\sigma \in [r,2r]$, we have $\|v_{\cb,\sigma}(X_{i_1})-v_{\cb,\sigma}(X_{i_2})\|_\infty \leq \frac{\Delta}{4}$.

\section{Proofs for Section \ref{sec:mean_measure_quantization}}\label{sec:proof_sec_mean_measure_quantization}
\subsection{Proof of Theorem \ref{thm:optLloyd}}\label{sec:proof_thm_optLloyd}
Throughout this section we assume that $\mathbb{E}(X)$ satisfies a margin condition with radius $r_0$, and that $\mathbb{P}(X \in \mathcal{M}_{N_{max}}(R,M)) =1$. 
The proof of Theorem \ref{thm:optLloyd} is based on the following lemma, which ensures that every step of Algorithm \ref{lst:lloyd} is, up to concentration terms, a contraction towards an optimal codebook. We recall here that $R_0= \frac{Br_0}{16\sqrt{2}R}$, $\kappa_0 = \frac{R_0}{R}$.
\begin{lem}\label{lem:Lloyd_one_iteration}
Assume that $\cb^{(0)} \in \B(\cb^*,R_0)$. Then, with probability larger than $1-9e^{ -c_1 n p_{min}/M} - e^{-x}$, for $n$ large enough, we have, for every $t$,
\begin{align}\label{eq:lloyd_distorsion}
\|\cb^{(t+1)} - \cb^*\|^2 \leq \frac{3}{4} \|\cb^{(t)} - \cb^*\|^2 + \frac{K}{p_{min}^2} D_n^2,
\end{align}
where $D_n = \frac{C R M}{\sqrt{n}}  \left (k \sqrt{d\log(k)}+ \sqrt{x} \right )$ and $C,K$ are positive constants.
\end{lem}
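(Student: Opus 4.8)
The plan is to control a single Lloyd step directly, in terms of barycenters. Algorithm~\ref{lst:lloyd} sets $c^{(t+1)}_j$ equal to the $\bar X_n$-barycenter of the Voronoi cell $W_j(\cb^{(t)})$, i.e. $c^{(t+1)}_j=\int_{W_j(\cb^{(t)})}u\,\bar X_n(du)/\bar X_n(W_j(\cb^{(t)}))$, whereas the centroid (stationarity) condition satisfied by any optimal codebook gives $c^*_j = \int_{W_j(\cb^*)} u\, \E(X)(du)/\E(X)(W_j(\cb^*))$. I would write, componentwise,
\[
c^{(t+1)}_j - c^*_j = \big( c^{(t+1)}_j - \tilde c_j \big) + \big( \tilde c_j - c^*_j \big), \qquad \tilde c_j := \frac{\int_{W_j(\cb^{(t)})} u\, \E(X)(du)}{\E(X)\big(W_j(\cb^{(t)})\big)},
\]
isolating a \emph{population contraction} term $\tilde c_j - c^*_j$ (the deterministic Lloyd map evaluated at two codebooks) from a \emph{stochastic} term $c^{(t+1)}_j - \tilde c_j$ ($\bar X_n$ versus $\E(X)$ on the same cell). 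Squaring, summing over $j$ and using $(a+b)^2 \le \tfrac32 a^2 + 3 b^2$ then produces the constant $\tfrac34$ in~\eqref{eq:lloyd_distorsion}, provided one has (i) a bound $\|\tilde\cb-\cb^*\|^2 \le \tfrac12\|\cb^{(t)}-\cb^*\|^2$ and (ii) $\|c^{(t+1)}_j-\tilde c_j\| \lesssim D_n/p_{min}$. One may equivalently view a Lloyd step as a preconditioned subgradient step for $\cb\mapsto\int\min_j\|u-c_j\|^2\,\bar X_n(du)$, but the barycenter decomposition matches the statement most directly.

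For (i) the margin condition is the sole engine. Under $\|\cb^{(t)}-\cb^*\|\le R_0$, a Voronoi-perturbation estimate driven by the separation $B$ of Proposition~\ref{prop:B_p_min} — which is exactly what the calibration $R_0 = Br_0/(16\sqrt 2 R)$ is designed for — shows that $W_j(\cb^{(t)})\triangle W_j(\cb^*)$ lies in a $cB^{-1}\|\cb^{(t)}-\cb^*\|$-neighbourhood of the skeleton $N(\cb^*)$, with the portion along each facet of thickness $O(\|c^{(t)}_i-c^*_i\|+\|c^{(t)}_j-c^*_j\|)$. Using the two-barycenter identity $\frac{a_1}{m_1}-\frac{a_2}{m_2} = \frac{a_1-a_2}{m_1}+\frac{a_2}{m_2}\frac{m_2-m_1}{m_1}$ with $\|a_i\|\le R\,m_i$ and $m_1 = \E(X)(W_j(\cb^{(t)}))\ge p_{min}/2$ (the latter again from the margin bound and $\|\cb^{(t)}-\cb^*\|\le R_0$), one gets $\|\tilde c_j-c^*_j\|\le \tfrac{4R}{p_{min}}\,\E(X)\big(W_j(\cb^{(t)})\triangle W_j(\cb^*)\big)$; inserting $\E(X)(\B(N(\cb^*),t))\le \tfrac{Bp_{min}}{128R^2}t$ and summing the facet-localized contributions over $j$ yields the desired $\tfrac12$. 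The delicate point — and, I expect, the main obstacle — is precisely the facet-by-facet bookkeeping that keeps this constant independent of $k$ and $d$: a crude union over $j$ would replace $\tfrac12$ by $\Theta(k)$, and only the localized use of the margin condition avoids this.

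For (ii), the same two-barycenter identity together with $\E(X)(W_j(\cb^{(t)}))\ge p_{min}/2$ reduces the problem to a deviation bound, uniform over $\cb\in\B(\cb^*,R_0)$ and $j\le k$, for $\big\|\int_{W_j(\cb)}u\,(\bar X_n-\E(X))(du)\big\|$ and $\big|(\bar X_n-\E(X))(W_j(\cb))\big|$. Each Voronoi cell is an intersection of at most $k-1$ half-spaces, so the relevant function classes have VC-type dimension $O(kd)$; a bounded-difference inequality (each $X_i$ changes any such empirical average by $O(RM/n)$) combined with a chaining bound over these classes gives, with probability $1-e^{-x}$, a bound by $D_n = CRMn^{-1/2}(k\sqrt{d\log k}+\sqrt x)$, the polylogarithmic factors coming from the union over the $k$ cells and $d$ coordinates. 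The lower bounds $\bar X_n(W_j(\cb))\ge p_{min}/2$ used throughout hold simultaneously over $\B(\cb^*,R_0)$ and all $j$ with probability $1-9e^{-c_1 np_{min}/M}$: a relative deviation (Chernoff-type) bound for the small events $\bar X_n(W_j(\cb))\le p_{min}/2$ on a fine net of $\B(\cb^*,R_0)$, transferred to the whole ball by controlling, via the margin condition and one more VC bound, the $\bar X_n$-mass swept in a thin neighbourhood of $N(\cb^*)$ — this is where the hypothesis ``$n$ large enough'' enters, since the net must shrink with $n$. Dividing the deviation bounds by denominators $\ge p_{min}/2$ turns $D_n$ into estimate (ii), and combining with (i) as above gives~\eqref{eq:lloyd_distorsion}. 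Finally, the bound holds for every $t$ with no further union: one reuses a single high-probability event, and the hypothesis $\cb^{(t)}\in\B(\cb^*,R_0)$ propagates because $\tfrac34 R_0^2 + Kp_{min}^{-2}D_n^2\le R_0^2$ once $n$ is large enough that $D_n$ is small.
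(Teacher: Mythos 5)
Your proposal is correct in substance and rests on the same two pillars as the paper's argument, but it packages them differently. The paper expands $\|\hat m(\cb)-\cb^*\|^2=\|\cb-\cb^*\|^2+2\langle \hat m(\cb)-\cb,\cb-\cb^*\rangle+\|\hat m(\cb)-\cb\|^2$ and treats the Lloyd step as a (unit step-size) stochastic gradient step: the cross term is made sufficiently negative and the squared step sufficiently small via Lemma~\ref{lem:gradient_margin}, which states exactly the facet-localized, $k$-free bounds you identify as the crux, namely $\sum_j|p_j(\cb)-p_j(\cb^*)|\leq p_{min}/64$ and $\sum_j\|\int(u-c_j)\1_{W_j(\cb)}(u)\E(X)(du)-p_j(\cb^*)(c_j^*-c_j)\|\leq \frac{p_{min}}{8\sqrt2}\|\cb-\cb^*\|$ (imported from the appendix of \cite{AppendixLevrard15}). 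You instead split the new iterate as (population Lloyd image) plus (empirical fluctuation) and invoke a contraction of the population Lloyd map; this is equivalent in content --- your step (i) follows from Lemma~\ref{lem:gradient_margin} by writing $\tilde c_j-c_j^*=(\frac{p_j(\cb^*)}{p_j(\cb)}-1)(c_j^*-c_j)+\frac{\Delta_j}{p_j(\cb)}$, which in fact yields a contraction factor much better than $1/2$ --- and is arguably cleaner for the batch algorithm, whereas the paper's cross-term decomposition is the one that carries over verbatim to the mini-batch recursion of Lemma~\ref{lem:onestep_McQueen}, where step sizes $1/(t+1)$ make a genuine SGD analysis unavoidable. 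Your step (ii) coincides with Lemma~\ref{lem:concentration_Lloyd}; the only point where your mechanism differs is the uniform relative lower bound on $\hat p_j(\cb)$: the paper obtains it without a net, via a normalized symmetrization inequality (Lemma~\ref{lem:symmetrizationnromalized}) applied to the VC-type class $\{X\mapsto X(W)/M\}$, and this is precisely where the hypothesis $X\in\mathcal M_{N_{max}}(R,M)$ is used to bound the shattering numbers --- your net-plus-bounded-differences route would need an analogous control and should acknowledge that assumption. Your closing remark on propagating $\cb^{(t)}\in\B(\cb^*,R_0)$ on a single uniform event matches the paper's recursion exactly.
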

The proof of Lemma \ref{lem:Lloyd_one_iteration} is deferred to \Cref{sec:proof_lemma_Lloyd_one_iteration}.
\begin{proof}[Proof of Theorem \ref{thm:optLloyd}]
Equipped with Lemma \ref{lem:Lloyd_one_iteration}, the proof of Theorem \ref{thm:optLloyd} is straightforward.  On the probability event of Lemma \ref{lem:Lloyd_one_iteration}, using \eqref{eq:lloyd_distorsion} we have that 
\begin{align*}
\| \cb^{(t)} - \cb^* \|^2 & \leq \left ( \frac{3}{4} \right )^{t}\| \cb^{(0)} - \cb^*\|^2 + \left ( \sum_{p=0}^{t-1} \left ( \frac{3}{4} \right )^p \right ) \frac{K}{p_{min}^2} D_n^2 \\
& \leq \left ( \frac{3}{4} \right )^{t}\| \cb^{(0)} - \cb^*\|^2 + \frac{4K}{p_{min}^2}D_n^2.
\end{align*}
\end{proof}

\subsection{Proof of Theorem \ref{thm:opt_Mc_Queen}}\label{sec:proof_thm_opt_Mc_Queen}
The proof of Theorem \ref{thm:opt_Mc_Queen} follows the proof of \cite[Lemma 1]{Rakhlin11}. 
Throughout this section we assume that $\E(X)$ satisfies a margin condition with radius $r_0$. The proof of Theorem \ref{thm:opt_Mc_Queen} is based on the following lemma. Recall that $R_0= \frac{Br_0}{16\sqrt{2}R}$, $\kappa_0 = \frac{R_0}{R}$. We let $\cb^{(t)}$ denote the output of Algorithm \ref{lst:macqueen} (or its variant in the sample point case).
\begin{lem}\label{lem:onestep_McQueen}
Assume that $\cb^{(0)} \in \mathcal{B}(\cb^*,R_0)$, and $n_t \geq c_0 \frac{k^2M^2}{p_{min}^2 \kappa_0^2} \log(n)$, for some constant $c_0$, with $n \geq k$. Then we have, for any $t =0, \hdots, T-1$,
\begin{align}\label{eq:mcqueen_iter_expectation}
\E \left ( \| \cb^{(t+1)} - \cb^*\|^2 \right ) \leq \left ( 1 - \frac{2-K_1}{t+1} \right ) \E \left ( \| \cb^{(t)} - \cb^* \|^2 \right ) + \frac{16kMR^2}{p_{min}(t+1)^2},
\end{align}
with $K_1 \leq 0.5$.
\end{lem}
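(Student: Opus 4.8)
The plan is to read Algorithm~\ref{lst:macqueen} as a projected stochastic gradient recursion with step size $1/(t+1)$ and to run the argument of \cite[Lemma 1]{Rakhlin11}, which in a (locally) strongly convex setting produces precisely a recursion of the form \eqref{eq:mcqueen_iter_expectation}. Set $\mathcal{F}_t = \sigma(\cb^{(0)},B_1,\hdots,B_t)$. The centroid update can be written $\cb^{(t+1)} = \pi_{\B(0,R)^k}\bigl(\cb^{(t)} - \tfrac{1}{t+1}\hat g^{(t)}\bigr)$, where the $j$-th block of $\hat g^{(t)}$ is $\hat g^{(t)}_j = \bigl(\int (c_j^{(t)}-u)\1_{W_j(\cb^{(t)})}(u)\,\bar X_{B_{t+1}^{(1)}}(du)\bigr)/\bar X_{B_{t+1}^{(2)}}(W_j(\cb^{(t)}))$; equivalently, up to the projection, $c_j^{(t+1)} = (1-\tfrac1{t+1})c_j^{(t)} + \tfrac1{t+1}\hat{\bar u}_j^{(t)}$ is a convex combination of the current codepoint and a (ratio-corrected) empirical centroid $\hat{\bar u}_j^{(t)}$ of the cell $W_j(\cb^{(t)})$. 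The splitting of each mini-batch in two halves is used to decorrelate the numerator ($B_{t+1}^{(1)}$) from the denominator ($B_{t+1}^{(2)}$): conditionally on $\mathcal{F}_t$ and on $B_{t+1}^{(2)}$, the numerator is an unbiased estimate of its $\E(X)$-counterpart.

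First I would, as in \cite{Rakhlin11}, use the non-expansiveness of $\pi_{\B(0,R)^k}$ (legitimate since $\cb^*\in\B(0,R)^k$) to obtain
\begin{align*}
\|\cb^{(t+1)}-\cb^*\|^2 \le \|\cb^{(t)}-\cb^*\|^2 - \tfrac{2}{t+1}\bigl\langle \hat g^{(t)},\cb^{(t)}-\cb^*\bigr\rangle + \tfrac{1}{(t+1)^2}\|\hat g^{(t)}\|^2,
\end{align*}
take conditional expectation given $\mathcal{F}_t$, and supply the two ingredients of that scheme. For the \emph{curvature term}, I would invoke the contraction of the population Lloyd map on $\B(\cb^*,R_0)$ guaranteed by the margin condition (\cite{Levrard18}): on that ball $\|\tilde\cb(\cb)-\cb^*\|\le\rho\|\cb-\cb^*\|$ with $\rho$ small, whence, since $\hat g^{(t)}$ is essentially $\cb^{(t)}-\hat{\bar u}^{(t)}$ and $\E[\hat{\bar u}^{(t)}\mid\mathcal F_t]$ is close to $\tilde\cb(\cb^{(t)})$, one gets $\E[\langle \hat g^{(t)},\cb^{(t)}-\cb^*\rangle\mid\mathcal F_t]\ge (1-\rho)\|\cb^{(t)}-\cb^*\|^2$ up to a bias term dealt with below. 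For the \emph{noise term}, both $\cb^{(t)}$ and the empirical cell centroids lie in $\B(0,R)^k$ and, on a high-probability event, the cell masses $\bar X_{B_{t+1}^{(2)}}(W_j(\cb^{(t)}))$ are bounded below by $p_{min}/4$; this yields $\E[\|\hat g^{(t)}\|^2\mid\mathcal F_t]\le 16kMR^2/p_{min}$ (with $M/p_{min}\ge 1$ absorbing lower-order ratio contributions), which is exactly the additive term of \eqref{eq:mcqueen_iter_expectation}. Substituting into the display, expanding the $(1-\tfrac1{t+1})$-type factors and discarding the leftover $1/(t+1)^2$ pieces produces the coefficient $1-\tfrac{2-K_1}{t+1}$, the residual $K_1\le 0.5$ collecting $\rho$, the bias, and the concentration error once $c_0$ is large enough.

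The technical heart, and the main obstacle, is controlling the ratio estimator $\hat{\bar u}^{(t)}_j$ uniformly along the whole run. Two points must be handled. (i) \emph{Cell masses stay bounded below}: using the margin condition one first checks $\E(X)(W_j(\cb))\ge p_{min}/2$ for every $\cb\in\B(\cb^*,R_0)$ and every $j$, and then a Bernstein inequality for sums of the $[0,M]$-valued variables $X_i(W_j(\cb^{(t)}))$, $i\in B_{t+1}^{(2)}$, gives $\bar X_{B_{t+1}^{(2)}}(W_j(\cb^{(t)}))\ge p_{min}/4$ on an event $\mathcal A_t$ with $\Prob(\mathcal A_t^c\mid\mathcal F_t)\le k\exp(-c\,n_t p_{min}^2/M^2)$; the hypothesis $n_t\ge c_0 k^2M^2\log(n)/(p_{min}^2\kappa_0^2)$ makes this polynomially small in $n$, so that, using the deterministic bound $\|\cb^{(t+1)}-\cb^*\|^2\le 4kR^2$ on $\mathcal A_t^c$, the contribution of $\mathcal A_t^c$ to the expectation is negligible compared with $16kMR^2/(p_{min}(t+1)^2)$ (recall $t+1\le T\le n$). (ii) \emph{The conditional bias is second order}: writing $\hat{\bar u}_j^{(t)}-\tilde c_j^{(t)}$ over the common denominator $\bar X_{B_{t+1}^{(2)}}(W_j(\cb^{(t)}))$, its numerator splits into a term driven by $B_{t+1}^{(1)}$ — conditionally mean zero, and independent of the denominator by the split — and a term proportional to $\E(X)(W_j)-\bar X_{B_{t+1}^{(2)}}(W_j)$; a ratio expansion around $\E(X)(W_j)\ge p_{min}/2$ leaves only an $O(M^2R/(n_t p_{min}^3))$ remainder, absorbed into the $K_1/(t+1)$ slack. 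Finally one must maintain $\cb^{(t)}\in\B(\cb^*,R_0)$ along the iteration, which follows by induction from \eqref{eq:mcqueen_iter_expectation} together with the $\mathcal A_t$ concentration, the unlikely escape being once more absorbed via $\|\cb^{(t+1)}-\cb^*\|^2\le 4kR^2$.
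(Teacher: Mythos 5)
Your plan follows the paper's proof essentially step for step: the projected-SGD reading with step $1/(t+1)$ and the Rakhlin-type recursion, the margin-condition lemma (Lemma~\ref{lem:gradient_margin}) supplying the curvature term, Bernstein/bounded-difference concentration for the cell masses and for the gradient term (the paper's Lemma~\ref{lem:concentration_McQueen}, applied with $x=4\log(2n)$ so the bad events have probability $O(k/n^4)$), the batch split used exactly as you describe to factor $E_t\bigl[\langle\text{numerator},\cdot\rangle\,(\hat p_j^{t+1})^{-1}\1_{A_{t+1,1}}\bigr]$ into a product of two conditional expectations, and the bad events absorbed via the trivial bound $\|\cb^{(t+1)}-\cb^*\|^2\le 4kR^2$ together with $p_{min}\le M/k$ and $t+1\le n$.

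The one point where your logic, as written, does not go through is the containment $\cb^{(t)}\in\B(\cb^*,R_0)$. You propose to obtain it ``by induction from \eqref{eq:mcqueen_iter_expectation}'', but \eqref{eq:mcqueen_iter_expectation} is a bound on expectations and cannot yield pathwise membership in the ball; worse, the derivation of \eqref{eq:mcqueen_iter_expectation} at step $t$ already presupposes that membership, since Lemma~\ref{lem:gradient_margin} only applies inside $\B(\cb^*,R_0)$, so the argument as stated is circular. The paper resolves this by first running a separate, purely pathwise induction: on the intersection $A_{\le t}$ of all concentration events up to time $t$, one shows $a_{t+1}\le 0.8\,a_t + K(D_n^{t+1})^2/p_{min}^2 \le 0.8\,a_t + 0.2\,R_0^2\le R_0^2$, and it is precisely here that the mini-batch size hypothesis $n_t\ge c_0 k^2M^2\log(n)/(p_{min}^2\kappa_0^2)$ is used a second time, to make the deterministic per-step noise at most $0.2\,R_0^2$ so that $\B(\cb^*,R_0)$ is invariant on the good event (not only to make the failure probability polynomially small, which is the only role you assign it). Only after this invariance is secured does the paper derive the expectation recursion, splitting $\E a_{t+1}=\E\bigl(a_{t+1}\1_{A_{\le t}\cap A_{t+1,1}}\bigr)+R_1$. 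With the argument reordered in this way, your sketch coincides with the paper's proof.
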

The proof of Lemma \ref{lem:onestep_McQueen} is deferred to \Cref{tecsec:proof_lemma_onestep_McQueen}.
\begin{proof}[Proof of Theorem \ref{thm:opt_Mc_Queen}]
Equipped with Lemma \ref{lem:onestep_McQueen}, we can prove Theorem \ref{thm:opt_Mc_Queen} using the same method as in the proof of \cite[Lemma 1]{Rakhlin11}. Namely, denoting by $a_t = \| \cb^{(t)} - \cb^*\|^2$, we prove recursively that 
\begin{align*}
\E a_t \leq \frac{32kMR^2}{p_{min}t}.
\end{align*}
Denote by $G=\frac{16kMR^2}{p_{min}}$. The case $t=1$ is obvious. Next, assuming that $\E a_t \leq \frac{2G}{t}$ and using \eqref{eq:mcqueen_iter_expectation} we may write
\begin{align*}
\E a_{t+1} & \leq \left ( 1 - \frac{2}{t+1} \right )\E a_t + \frac{K_1}{t+1}\E a_t + \frac{G}{(t+1)^2} \\
& \leq \frac{G}{t(t+1)} \left [ 2 t +2K_1-1 \right ].
\end{align*}
Since $K_1 \leq \frac{1}{2}$, we get that $\E a_{t+1} \leq 2G/(t+1)$.
\end{proof} 

\subsection{Proof of Corollary \ref{cor:vectorization_Lloyd}}
\label{sec:proof_cor_vectorization_Lloyd}
For $n$ large enough, with probability larger than $1-\exp{\left [-C\left ( \frac{n r^2 p_{min}^2}{p^2M^2R^2k^2 d \log(k)} - \frac{p_{min}^2 B^2 r_0^2}{M^2 R^4 k^2 d \log(k)} \right ) \right ]}$, we have $\| \hat{\cb}_n - \cb^* \| \leq \frac{r}{p}$, according to Theorem \ref{thm:optLloyd}. Let $i_1$, $i_2$ $\in$ $[\![1,n]\!]$ be such that $Z_{i_1} \neq Z_{i_2}$. Without loss of generality assume that 
\[
X_{i_1} \left ( \B(c_1^*,r/p) \right ) \geq  X_{i_2} \left ( \B(c_1^*,4pr) \right ) + \Delta.
\]
Then $X_{i_1} \left ( \B(\hat{c}_1,2r/p) \right ) \geq X_{i_1} \left ( \B(\hat{c}_1,r/p)) \right )$, combined with $X_{i_2} \left ( \B(\hat{c}_1,4(p/2)r) \right ) \leq X_{i_2} \left( \B(c_1^*,4pr) \right )$ entails that
\begin{align*}
X_{i_1} \left ( \B(\hat{c}_1,2r/p) \right ) \geq X_{i_2} \left ( \B(\hat{c}_1,4(p/2)r) \right ) + \Delta.
\end{align*} 
\subsection{Proof of Proposition \ref{prop:mean_meas_closeto_disc_points}}\label{sec:proof_prop_meas_closeto_disc_points}

The proof of Proposition \ref{prop:mean_meas_closeto_disc_points} relies on the two following lemmas. First, Lemma \ref{lem:pers_diag_stability} below ensures that the persistence diagram transformation is stable with respect to the sup-norm. 
\begin{lem}{\cite{CohenSteiner07}}\label{lem:pers_diag_stability}
If $X$ and $Y$ are compact sets of $\R^D$, then 
\[
d_B \left ( D(d_X),D(d_Y) \right ) \leq \| d_X - d_Y \|_{\infty}.
\]
\end{lem}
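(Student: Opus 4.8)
The plan is to reproduce one of the standard proofs of the Cohen--Steiner--Edelsbrunner--Harer stability theorem, of which this statement is the special case of sublevel-set filtrations of distance functions. Write $\eta = \|d_X - d_Y\|_\infty$, which is finite since $X,Y$ are compact (one has $\|d_X-d_Y\|_\infty \le d_H(X,Y)$, the Hausdorff distance). First I would reduce to an interleaving statement: for every $t \in \R$ the pointwise inequalities $d_X \le d_Y + \eta$ and $d_Y \le d_X + \eta$ yield inclusions of sublevel sets
\[
\{d_X \le t\} \subseteq \{d_Y \le t + \eta\} \subseteq \{d_X \le t + 2\eta\},
\]
and symmetrically. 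Since $\{d_X \le t\}$ is the $t$-offset of a compact set, applying singular homology with coefficients in a field to these inclusions exhibits an $\eta$-interleaving between the persistence modules $\mathbb{V}(d_X)$ and $\mathbb{V}(d_Y)$ of the two offset filtrations.

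Second, I would record that these modules are q-tame, so that $D(d_X)$ and $D(d_Y)$ are well-defined locally finite multisets in $H^+$: the homology of offsets of a compact subset of $\R^D$ has finite rank in each degree, and the internal maps $H_*(\{d_X \le s\}) \to H_*(\{d_X \le t\})$ have finite rank for $s<t$ --- this is where the critical-value structure of distance functions enters, and it is already reflected in Lemma~\ref{lemma-truncated-dgm}, which shows that every band $\{(b,d): d-b \ge s\}$ meets the diagram in finitely many points.

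Third --- and this is the technical heart --- I would invoke the algebraic stability theorem: an $\eta$-interleaving between two q-tame persistence modules forces their diagrams to be at bottleneck distance at most $\eta$. Two routes are available. (i) The original argument: interpolate linearly by $f_s = d_X + s(d_Y - d_X)$, $s \in [0,1]$, so that $\|f_s - f_t\|_\infty \le |s-t|\,\eta$; use the Box Lemma, which expresses the multiplicity of $D(f_s)$ in any box $[a,b]\times[c,d]$ (with $b<c$) as an alternating sum of ranks of maps between sublevel sets, and sandwich these ranks between the corresponding ranks for $f_t$ after shifting the thresholds by $|s-t|\,\eta$; this gives, via the ``easy bijection'' lemma, $d_B(D(f_s),D(f_t)) \le |s-t|\,\eta$, and subdividing $[0,1]$ with the triangle inequality for $d_B$ yields $d_B(D(d_X),D(d_Y)) \le \eta$. (ii) Alternatively, pass through the decomposition of q-tame modules into interval modules and the induced matching theorem, turning the interleaving morphisms into an explicit $\eta$-matching of intervals. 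Either way one concludes $d_B(D(d_X),D(d_Y)) \le \eta = \|d_X-d_Y\|_\infty$.

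The main obstacle is precisely step three: steps one and two are formal, but the interleaving-to-matching passage --- the Box-Lemma bookkeeping together with the telescoping over the interpolation, or equivalently the structure theorem for q-tame modules --- carries all the substance. A secondary, minor point is to be sure the offset filtrations are genuinely q-tame for arbitrary compact $X,Y$ (not merely in a PL or smooth setting), which is guaranteed by known results on distance functions and is already in hand given Lemma~\ref{lemma-truncated-dgm}.
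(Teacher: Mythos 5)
The paper does not prove this lemma at all: it is imported wholesale from \cite{CohenSteiner07} (and, in the generality actually needed here --- arbitrary compact sets, hence q-tame rather than tame persistence modules --- from the later work on the structure and stability of persistence modules, e.g.\ \cite{Chazal2016}). Your outline is a correct account of the standard proof: the interleaving of offset filtrations from $\|d_X-d_Y\|_\infty$, the q-tameness needed for the diagrams to be defined, and the algebraic stability step that converts an $\eta$-interleaving into an $\eta$-matching. You also correctly identify that all the substance lives in the third step.

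One point deserves more weight than you give it. You call the q-tameness issue ``secondary and minor,'' but for route (i) --- the original interpolation argument --- the problem is not whether $d_X$ and $d_Y$ are q-tame (they are, as the paper's own proof of Lemma~\ref{lemma-truncated-dgm} records via \cite[Corollary 3.35]{Chazal2016}), but whether the interpolants $f_s = d_X + s(d_Y-d_X)$ are: these are no longer distance functions to compact sets, so the tameness hypothesis under which the Box Lemma bookkeeping of \cite{CohenSteiner07} is carried out is not automatic, and the original paper in fact assumes tameness of the whole interpolating family. For compact $X,Y$ in full generality the clean argument is your route (ii), the algebraic stability / induced matching theorem for q-tame modules, which needs no interpolation. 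With that caveat, your sketch is the right proof of the cited result.
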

To apply Lemma \ref{lem:pers_diag_stability} to $d_{\mathbb{Y}_{N_\ell}}$ and $d_{S}^{(\ell)}$, for $\ell \in [\![1,L]\!]$, a bound on $\|d_{\mathbb{Y}_{N_\ell}}-d_{S}^{(\ell)}\|_\infty$ may be derived from the following result.  
\begin{lem}{\cite[Lemma B.7]{Aamari19}}\label{lem:abstandard}
Let $S \subset \R^D$ be a $d$-dimensional submanifold with positive reach $\tau_S$, and let $\mathbb{Y}_N=Y_1, \hdots, Y_N$ be an i.i.d. sample drawn from a distribution that has a density $f(x)$ with respect to the Hausdorff measure of $S$. Assume that for all $x \in S$, $0 < f_{min} \leq f(x)$, and let $h= \left ( \frac{C_d k \log(N)}{f_{min}N} \right )^\frac{1}{d}$, where $C_d$ is a constant depending on $d$. If $h \leq \tau_S/4$, then, with probability larger than $1- \left ( \frac{1}{N} \right )^{\frac{k}{d}}$, we have
\[
\| d_S - d_{\mathbb{Y}_N} \|_\infty \leq h.
\]
\end{lem}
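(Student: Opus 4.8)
The plan is to reduce the sup-norm control to a purely probabilistic covering statement for the sample $\mathbb{Y}_N$. Since $\mathbb{Y}_N \subset S$, for every $x \in \R^D$ one has $d_{\mathbb{Y}_N}(x) = \min_{i} \|x - Y_i\| \geq d_S(x)$, so that
\[
\| d_S - d_{\mathbb{Y}_N} \|_\infty = \sup_{x \in \R^D} \big( d_{\mathbb{Y}_N}(x) - d_S(x) \big).
\]
Fix $x$ and let $p \in S$ achieve $\|x-p\| = d_S(x)$ (such $p$ exists since $S$ is compact). If some sample point $Y_i$ satisfies $\|Y_i - p\| \leq h$, then $d_{\mathbb{Y}_N}(x) \leq \|x - Y_i\| \leq \|x-p\| + \|p - Y_i\| \leq d_S(x) + h$. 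Consequently, on the event
\[
\mathcal{E} = \Big\{ S \subseteq \bigcup_{i=1}^N \B(Y_i, h) \Big\}
\]
one has $\|d_S - d_{\mathbb{Y}_N}\|_\infty \leq h$, and it remains to prove $\Prob(\mathcal{E}^c) \leq N^{-k/d}$.

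To bound $\Prob(\mathcal{E}^c)$, let $\{z_1, \dots, z_P\} \subseteq S$ be a maximal $(h/2)$-separated subset of $S$: by maximality $S \subseteq \bigcup_{j=1}^P \B(z_j, h/2)$, while the balls $\B(z_j, h/4)$ are pairwise disjoint. If every $\B(z_j, h/2)$ contains at least one sample point then $\mathcal{E}$ holds, whence, writing $\mu$ for the sampling law,
\[
\Prob(\mathcal{E}^c) \leq \sum_{j=1}^P \big( 1 - \mu(\B(z_j, h/2)) \big)^N \leq \sum_{j=1}^P \exp\big( - N \mu(\B(z_j, h/2)) \big).
\]
Here the geometry of $S$ enters: since $h/2 \leq \tau_S/8$, the positive-reach condition yields a uniform volume lower bound $\mathcal{H}^d(\B(z, \rho) \cap S) \geq c_d \rho^d$ for all $z \in S$ and $\rho \leq \tau_S/4$, with $c_d>0$ depending only on $d$ (see \cite{Aamari19} and the references therein). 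With $f \geq f_{min}$ this gives $\mu(\B(z_j, h/2)) \geq f_{min}\, c_d\, (h/2)^d$, and the disjointness of the $\B(z_j, h/4)$ together with $\mathcal{H}^d(S) < \infty$ and $h^{-d} \leq f_{min} N / (C_d k \log N)$ gives $P \leq \mathcal{H}^d(S)\, c_d^{-1}\, (4/h)^d \leq A\, N$ for a finite constant $A$. Therefore
\[
\Prob(\mathcal{E}^c) \leq A\, N \exp\big( - N f_{min}\, c_d\, 2^{-d} h^d \big).
\]

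It then suffices to substitute $h^d = C_d k \log(N) / (f_{min} N)$, giving $N f_{min} c_d 2^{-d} h^d = c_d 2^{-d} C_d\, k \log N$ and hence $\Prob(\mathcal{E}^c) \leq A\, N^{\,1 - c_d 2^{-d} C_d k}$. Using $k \geq 1$ and $d \geq 1$, choosing $C_d$ large enough (depending only on $d$, through $c_d$) makes $c_d 2^{-d} C_d k - 1 - (\log A)/\log N \geq k/d$ for all $N$ large enough — and $h \leq \tau_S/4$ already forces $N$ to be large — so that $\Prob(\mathcal{E}^c) \leq N^{-k/d}$, as claimed.

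The only non-routine step is the geometric one: the uniform lower bound $\mathcal{H}^d(\B(z,\rho) \cap S) \geq c_d \rho^d$ for $z \in S$ and $\rho$ a fixed fraction of $\tau_S$. This is exactly where the hypothesis $h \leq \tau_S/4$ is used, and it rests on comparison-type estimates for submanifolds of positive reach (the second fundamental form of $S$ is bounded by $1/\tau_S$, so small metric balls of $S$ are comparable to Euclidean $d$-balls). Everything else is the standard maximal-separated-set / union-bound bookkeeping above.
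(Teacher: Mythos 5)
Your proof is correct. Note that the paper does not actually prove this lemma---it is imported verbatim from \cite[Lemma B.7]{Aamari19}---and your argument is precisely the standard one behind such results: reduce the sup-norm deviation to the event that the sample $h$-covers $S$ (using $\mathbb{Y}_N\subset S$, hence $d_{\mathbb{Y}_N}\geq d_S$), discretize $S$ by a maximal $(h/2)$-separated net, apply a union bound, and feed in the reach-based volume lower bound $\mathcal{H}^d(\B(z,\rho)\cap S)\geq c_d\rho^d$ together with $f\geq f_{min}$. Two minor remarks: the volume lower bound requires $S$ to be a closed (boundaryless) submanifold, which is the setting in which the lemma is applied in the paper; and since your constant $A$ can be bounded by $c_d^{-1}4^d$ via $f_{min}\,\mathcal{H}^d(S)\leq \mu(S)=1$, it depends only on $d$, so the final choice of $C_d$ can be made to work for all $N\geq 2$ and the caveat ``for $N$ large enough'' is unnecessary.
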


We let $Z$ denote the latent label variable, so that $X\mid \{Z=\ell\} \sim X^{(\ell)}$, or equivalently $X = X^{(Z)}$. Now, for $\ell \in [\![1,L]\!]$, we let $\alpha_\ell = \left ( \frac{1}{N_\ell} \right )^{d_\ell + \frac{2}{d_\ell}}$, $h_\ell = \left ( \frac{C_{d_\ell} \log(N_\ell)}{f_{min,\ell} N_\ell} \right)^{1/d_\ell}$, for some constants $C_{d_\ell}$, and $A= \{ \|d_{Y_{N_{Z}}} - d_{S_Z}\|_\infty > h_Z \}$, so that $\mathbb{P}(A \mid Z= \ell ) \leq \alpha_\ell$. Also, let $m_{k_0+1}, \hdots, m_{k_0 + K_0(h)}$ be such that $\bigcup_{ \ell =1}^{L} D_{\geq s}^{(\ell)} \setminus \{m_1, \hdots, m_{k_0} \}  \subset \bigcup_{s=1}^{K_0(h)} \B_{\infty}(m_{k_0+s},h)$, and $\mathbf{m} = (m_1, \hdots, m_{k_0 + K_0(h)})$. At last, we let $R = \max_{\ell \leq L} diam(S_\ell)$. 
Choosing the $N_\ell$'s large enough so that $D^{(\ell)}_{\geq s-h_\ell} = D^{(\ell)}_{\geq s}$ and $s/2 > h_\ell$, $X^{(Z)}$ and $D^{(Z)}_{\geq s}$ have the same number of (multiple) points, when $A^c$ occurs. Further, we have
\begin{align*}
F(\mathbf{m}) &= \E \left ( \sum_{\ell = 1} \mathds{1}_{Z=\ell}  \int \min_{j = 1, \hdots, k_0 + K_0(h)} \| u - m_j \|^2 X^{(\ell)}(du) \right ) \\
         & = \begin{multlined}[t] \E \left ( \sum_{\ell = 1} \mathds{1}_{Z=\ell \cap A}  \int \min_{j = 1, \hdots, k_0 + K_0(h)} \| u - m_j \|^2 X^{(\ell)}(du) \right ) \\
          +  \E \left ( \sum_{\ell = 1} \mathds{1}_{Z=\ell \cap A^c}  \int \min_{j = 1, \hdots, k_0 + K_0(h)} \| u - m_j \|^2 X^{(\ell)}(du) \right ),
          \end{multlined} 
\end{align*}
so that
\begin{align*}          
       F(\mathbf{m})  & \leq \E \left ( \sum_{\ell = 1} \mathds{1}_{Z=\ell \cap A} 4R^2 N^{d_\ell} \right ) + \E \left ( \sum_{\ell = 1} \mathds{1}_{Z=\ell \cap A^c} M^{(\ell)} 2 h_\ell^2 \right ) \\        
          & \leq 2h^2 \bar{M} + 4 R^2 \sum_{\ell =1}^L \pi_\ell \alpha_\ell N^{d_\ell}.
\end{align*}
For $N_\ell$ large enough so that $\alpha_\ell N_\ell^{d_\ell} \leq \frac{\bar{M} h_\ell^2}{R^2}$, we have $
F(\mathbf{m}) \leq 6 h^2 \bar{M}$.

On the other hand, let $\cb$ be a $k$-points codebook such that, for every $p \in [\![1,k]\!]$, $\|m_1-c_p\|_\infty > 5 \sqrt{\frac{\bar{M}}{\pi_{min}}}h$. Then we have
\begin{align*}
F(\cb) & \geq \E \left ( \sum_{\ell = 1}^L \mathds{1}_{A^c \cap Z=\ell} X^{(\ell)}(\B_\infty(m_1,h)) \left ( 5 \sqrt{\frac{\bar{M}}{\pi_{min}}} -1 \right )^2 \right ) h^2 \\
& \geq  \E \left ( \sum_{\ell = 1}^L \mathds{1}_{A^c \cap Z=\ell} n^{(\ell)}(m_1) \left ( 5 \sqrt{\frac{\bar{M}}{\pi_{min}}} -1 \right )^2  \right ) h^2.
\end{align*} 
Now let $\ell_0$ be such that $n^{(\ell_0)}(m_1) \geq 1$, and assume that the $N_\ell$'s are large enough so that $\alpha_\ell \leq \frac{1}{2}$. It holds
\begin{align*}
F(\cb) & \geq  \left ( 5 \sqrt{\frac{\bar{M}}{\pi_{min}}} -1 \right )^2 h^2 \pi_{\ell_0}(1- \alpha_{\ell_0}) 
          \geq 8 \bar{M} h^2 > F(\mathbf{m}),
\end{align*}
hence the result.
\subsection{Proof of Proposition \ref{prop:PD_vectorization_shattering}}\label{sec:proof_prop_PD_vectorization_shattering}
We let $M=\max_{\ell \leq L} M_\ell$, $h_0$ be such that $\bar{D}_{\geq s-h_0} = \bar{D}_{\geq s}$.
Let $\kappa \leq \frac{1}{16} \wedge \frac{h_0}{2\tilde{B}}$. Under the assumptions of Proposition \ref{prop:mean_meas_closeto_disc_points}, we choose $N_\ell$, $\ell \leq L$ large enough so that ${5\sqrt{\bar{M}}h}/(\sqrt{\pi_{min}}) \leq \kappa \tilde{B}$.
Next, denote by $\alpha_\ell = N_\ell^{-\left ( \frac{(\kappa \tilde{B})^{d_\ell} f_{min,\ell} N_\ell}{C_\ell d_\ell \log(N_\ell)}\right)}$. Then we have
\begin{align*}
\mathbb{P} \left ( \exists i \in [\![1,n]\!]\mid  d_B(X_i,D^{Z_i}_{\geq s}) > \kappa \tilde{B} \right ) &\leq \sum_{i=1}^n \mathbb{P} \left ( d_B(X_i,D^{Z_i}) > \kappa \tilde{B} \right ) \\
& \leq \sum_{i=1}^n \sum_{\ell =1}^L \pi_\ell \alpha_\ell  \leq n \max_{\ell \leq L} \alpha_\ell.
\end{align*}
For the remaining of the proof we assume that, for $i=1, \hdots, n$, $d_B(X_i,D^{Z_i}_{\geq s}) \leq \kappa \tilde{B}$, that occurs with probability larger than $1- n \max_{\ell \leq L} \alpha_\ell$. Recall that on this probability event, under the assumptions of Proposition \ref{prop:mean_meas_closeto_disc_points},  $X_i$ and $D^{Z_i}_{\geq s}$ have the same number of (multiple) points.  Let $i_1 \neq i_2$, and assume that $Z_{i_1} = Z_{i_2} = z$. Then $W_\infty(X_{i_1},X_{i_2}) = d_B(X_{i_1},X_{i_2}) \leq 2 \kappa \tilde{B}$. Hence $W_1(X_{i_1},X_{i_2}) \leq 2M\kappa \tilde{B}$.

Now assume that $Z_{i_1} \neq Z_{i_2}$, and without loss of generality $m_{Z_{i_1},Z_{i_2}} = m_1$ with $D^{Z_{i_1}}_{\geq s}(\{m_1\}) \geq D^{Z_{i_2}}_{\geq s}(\{m_1\})+1$. Let $(p,r)$ in $\mathbb{N}^* \times \R^+$ be such that $r/p \geq 2 \kappa \tilde{B}$ and $4rp \leq \left( \frac{1}{2} - \kappa \right ) \tilde{B}$. Since $\|c^*_1 - m_1\|_\infty \leq \kappa \tilde{B}$ and $d_B(X_{i_1},D^{Z_{i_1}}_{\geq s}) \leq \kappa \tilde{B} < h_0$, we get 
\[
X_{i_1}\left ( \B(c_1^*,\frac{r}{p}) \right ) = D^{Z_{i_1}}_{\geq s}(\{m_1\}).
\]
On the other hand, since $4rp \leq (\frac{1}{2}-\kappa) \tilde{B}$, we also have $X_{i_2}\left ( \B(c_1^*,4rp) \right ) = D^{Z_{i_2}}_{\geq s}(\{m_1\})$.
Thus $X_1, \hdots, X_n$ is $(p,r,1)$-shattered by $\cb^*$.
\subsection{Proof of Corollary \ref{cor:PD_vectorization_clustering}}\label{sec:proof_cor_PD_vectorization_clustering}
In the case where $\Psi = \Psi_{AT}$, we have that $\Psi_{AT}$ is a $(p,1/p)$ kernel. The requirement $1/p \leq \frac{1}{4M}$ of Proposition \ref{prop:vectorization_scattering} is thus satisfied for $p_{AT} = \lceil 4M \rceil$. On the other hand, choosing $r_{AT} = \frac{\tilde{B}}{32 p_{AT}}$ ensures that $8 r_{AT}p_{AT} \leq (1/2 - \kappa) \tilde{B}$ and $\frac{r_{AT}}{2p_{AT}} \geq 2 \kappa \tilde{B}$, for $\kappa$ small enough. Thus, the requirements of Proposition \ref{prop:PD_vectorization_shattering} are satisfied: $\cb^*$ is a $(2p_{AT},r,1)$ shattering of $X_1, \hdots, X_n$. At last, using Corollary \ref{cor:vectorization_Lloyd}, we have that $\hat{\cb}_n$ is a $(p_{AT},r_{AT},1)$ shattering of $X_1, \hdots, X_n$, on the probability event described by Corollary \ref{cor:vectorization_Lloyd}. It remains to note that $2 \kappa \tilde{B} \leq \frac{r_{AT}}{4}$ for $\kappa$ small enough to conclude that $X_1, \hdots, X_n$ is $\frac{r_{AT}}{4}$-concentrated on the probability event described in Proposition \ref{prop:PD_vectorization_shattering}. Thus Proposition \ref{prop:well_clustered} applies.

The case $\Psi = \Psi_0$ is simpler. Since $\Psi_0$ is a $(1,0)$-kernel, we obviously have that $0 \leq \frac{1}{2M}$, so that the requirement of Proposition \ref{prop:PD_vectorization_shattering} is satisfied. With $p_0 =1$ and $r_0 = \frac{\tilde{B}}{16}$ we immediately get that $r_0/(2p_0) \geq 2 \kappa \tilde{B}$ and $8r_0p_0 \leq (1/2 - \kappa \tilde{B})$, for $\kappa$ small enough, so that $\hat{\cb}$ is a $(p_0,r_0,1)$ shattering of $X_1, \hdots, X_n$. As well, $2M \kappa \tilde{B} \leq \frac{r_0}{4}$, for $\kappa$ small enough. Thus Proposition \ref{prop:well_clustered} applies. 

\bibliography{biblio}

\begin{thebibliography}{44}
% BibTex style file: imsart-number.bst, 2013-01-28
% Default style options (sort=1,type=number).
% Used options (sort=1,type=number).

\bibitem{Aamari19}
\begin{barticle}[author]
\bauthor{\bsnm{Aamari},~\bfnm{Eddie}\binits{E.}} \AND
  \bauthor{\bsnm{Levrard},~\bfnm{Cl\'{e}ment}\binits{C.}}
(\byear{2019}).
\btitle{Nonasymptotic rates for manifold, tangent space and curvature
  estimation}.
\bjournal{Ann. Statist.}
\bvolume{47}
\bpages{177--204}.
\bdoi{10.1214/18-AOS1685}
\bmrnumber{3909931}
\end{barticle}
\endbibitem

\bibitem{Adams17}
\begin{barticle}[author]
\bauthor{\bsnm{Adams},~\bfnm{Henry}\binits{H.}},
  \bauthor{\bsnm{Emerson},~\bfnm{Tegan}\binits{T.}},
  \bauthor{\bsnm{Kirby},~\bfnm{Michael}\binits{M.}},
  \bauthor{\bsnm{Neville},~\bfnm{Rachel}\binits{R.}},
  \bauthor{\bsnm{Peterson},~\bfnm{Chris}\binits{C.}},
  \bauthor{\bsnm{Shipman},~\bfnm{Patrick}\binits{P.}},
  \bauthor{\bsnm{Chepushtanova},~\bfnm{Sofya}\binits{S.}},
  \bauthor{\bsnm{Hanson},~\bfnm{Eric}\binits{E.}},
  \bauthor{\bsnm{Motta},~\bfnm{Francis}\binits{F.}} \AND
  \bauthor{\bsnm{Ziegelmeier},~\bfnm{Lori}\binits{L.}}
(\byear{2017}).
\btitle{Persistence images: a stable vector representation of persistent
  homology}.
\bjournal{J. Mach. Learn. Res.}
\bvolume{18}
\bpages{Paper No. 8, 35}.
\bmrnumber{3625712}
\end{barticle}
\endbibitem

\bibitem{Arthur07}
\begin{binproceedings}[author]
\bauthor{\bsnm{Arthur},~\bfnm{David}\binits{D.}} \AND
  \bauthor{\bsnm{Vassilvitskii},~\bfnm{Sergei}\binits{S.}}
(\byear{2007}).
\btitle{{\tt k-means++}: the advantages of careful seeding}.
In \bbooktitle{Proceedings of the {E}ighteenth {A}nnual {ACM}-{SIAM}
  {S}ymposium on {D}iscrete {A}lgorithms}
\bpages{1027--1035}.
\bpublisher{ACM, New York}.
\bmrnumber{2485254}
\end{binproceedings}
\endbibitem

\bibitem{Bartlett99}
\begin{barticle}[author]
\bauthor{\bsnm{Bartlett},~\bfnm{Peter}\binits{P.}} \AND
  \bauthor{\bsnm{Lugosi},~\bfnm{G\'{a}bor}\binits{G.}}
(\byear{1999}).
\btitle{An inequality for uniform deviations of sample averages from their
  means}.
\bjournal{Statist. Probab. Lett.}
\bvolume{44}
\bpages{55--62}.
\bdoi{10.1016/S0167-7152(98)00291-0}
\bmrnumber{1706315}
\end{barticle}
\endbibitem

\bibitem{boissonnat2018geometric}
\begin{bbook}[author]
\bauthor{\bsnm{Boissonnat},~\bfnm{Jean-Daniel}\binits{J.-D.}},
  \bauthor{\bsnm{Chazal},~\bfnm{Fr{\'e}d{\'e}ric}\binits{F.}} \AND
  \bauthor{\bsnm{Yvinec},~\bfnm{Mariette}\binits{M.}}
(\byear{2018}).
\btitle{Geometric and Topological Inference}
\bvolume{57}.
\bpublisher{Cambridge University Press}.
\end{bbook}
\endbibitem

\bibitem{Massart13}
\begin{bbook}[author]
\bauthor{\bsnm{Boucheron},~\bfnm{St\'{e}phane}\binits{S.}},
  \bauthor{\bsnm{Lugosi},~\bfnm{G\'{a}bor}\binits{G.}} \AND
  \bauthor{\bsnm{Massart},~\bfnm{Pascal}\binits{P.}}
(\byear{2013}).
\btitle{Concentration inequalities}.
\bpublisher{Oxford University Press, Oxford}
\bnote{A nonasymptotic theory of independence, With a foreword by Michel
  Ledoux}.
\bdoi{10.1093/acprof:oso/9780199535255.001.0001}
\bmrnumber{3185193}
\end{bbook}
\endbibitem

\bibitem{Carriere19}
\begin{barticle}[author]
\bauthor{\bsnm{{Carri{\`e}re}},~\bfnm{Mathieu}\binits{M.}},
  \bauthor{\bsnm{{Chazal}},~\bfnm{Fr{\'e}d{\'e}ric}\binits{F.}},
  \bauthor{\bsnm{{Ike}},~\bfnm{Yuichi}\binits{Y.}},
  \bauthor{\bsnm{{Lacombe}},~\bfnm{Th{\'e}o}\binits{T.}},
  \bauthor{\bsnm{{Royer}},~\bfnm{Martin}\binits{M.}} \AND
  \bauthor{\bsnm{{Umeda}},~\bfnm{Yuhei}\binits{Y.}}
(\byear{2019}).
\btitle{{PersLay: A Neural Network Layer for Persistence Diagrams and New Graph
  Topological Signatures}}.
\bjournal{arXiv e-prints}
\bpages{arXiv:1904.09378}.
\end{barticle}
\endbibitem

\bibitem{Chazal09}
\begin{barticle}[author]
\bauthor{\bsnm{Chazal},~\bfnm{Frédéric}\binits{F.}},
  \bauthor{\bsnm{Cohen-Steiner},~\bfnm{David}\binits{D.}},
  \bauthor{\bsnm{Guibas},~\bfnm{Leonidas~J.}\binits{L.~J.}},
  \bauthor{\bsnm{Mémoli},~\bfnm{Facundo}\binits{F.}} \AND
  \bauthor{\bsnm{Oudot},~\bfnm{Steve~Y.}\binits{S.~Y.}}
(\byear{2009}).
\btitle{Gromov-Hausdorff Stable Signatures for Shapes using Persistence}.
\bjournal{Computer Graphics Forum}
\bvolume{28}
\bpages{1393-1403}.
\bdoi{10.1111/j.1467-8659.2009.01516.x}
\end{barticle}
\endbibitem

\bibitem{chazal2009sampling}
\begin{barticle}[author]
\bauthor{\bsnm{Chazal},~\bfnm{Fr{\'e}d{\'e}ric}\binits{F.}},
  \bauthor{\bsnm{Cohen-Steiner},~\bfnm{David}\binits{D.}} \AND
  \bauthor{\bsnm{Lieutier},~\bfnm{Andr{\'e}}\binits{A.}}
(\byear{2009}).
\btitle{A sampling theory for compact sets in Euclidean space}.
\bjournal{Discrete \& Computational Geometry}
\bvolume{41}
\bpages{461--479}.
\end{barticle}
\endbibitem

\bibitem{Chazal2016}
\begin{bbook}[author]
\bauthor{\bsnm{Chazal},~\bfnm{Fr{\'{e}}d{\'{e}}ric}\binits{F.}},
  \bauthor{\bparticle{de} \bsnm{Silva},~\bfnm{Vin}\binits{V.}},
  \bauthor{\bsnm{Glisse},~\bfnm{Marc}\binits{M.}} \AND
  \bauthor{\bsnm{Oudot},~\bfnm{Steve}\binits{S.}}
(\byear{2016}).
\btitle{{The structure and stability of persistence modules}}.
\bpublisher{Springer International Publishing}.
\end{bbook}
\endbibitem

\bibitem{ChazalDivol18}
\begin{bincollection}[author]
\bauthor{\bsnm{Chazal},~\bfnm{Fr\'{e}d\'{e}ric}\binits{F.}} \AND
  \bauthor{\bsnm{Divol},~\bfnm{Vincent}\binits{V.}}
(\byear{2018}).
\btitle{The density of expected persistence diagrams and its kernel based
  estimation}.
In \bbooktitle{34th {I}nternational {S}ymposium on {C}omputational {G}eometry}.
\bseries{LIPIcs. Leibniz Int. Proc. Inform.}
\bvolume{99}
\bpages{Art. No. 26, 15}.
\bpublisher{Schloss Dagstuhl. Leibniz-Zent. Inform., Wadern}.
\bmrnumber{3824270}
\end{bincollection}
\endbibitem

\bibitem{Cherniuk19}
\begin{bmisc}[author]
\bauthor{\bsnm{Cherniuk},~\bfnm{Alex}\binits{A.}}
(\byear{2019}).
\btitle{IMDB review challenge using Word2Vec and BiLSTM}.
\bhowpublished{\url{https://www.kaggle.com/alexcherniuk/imdb-review-word2vec-bilstm-99-acc/notebook}}.
\end{bmisc}
\endbibitem

\bibitem{CohenSteiner07}
\begin{barticle}[author]
\bauthor{\bsnm{Cohen-Steiner},~\bfnm{David}\binits{D.}},
  \bauthor{\bsnm{Edelsbrunner},~\bfnm{Herbert}\binits{H.}} \AND
  \bauthor{\bsnm{Harer},~\bfnm{John}\binits{J.}}
(\byear{2007}).
\btitle{Stability of persistence diagrams}.
\bjournal{Discrete Comput. Geom.}
\bvolume{37}
\bpages{103--120}.
\bdoi{10.1007/s00454-006-1276-5}
\bmrnumber{2279866}
\end{barticle}
\endbibitem

\bibitem{Cuturi13}
\begin{barticle}[author]
\bauthor{\bsnm{{Cuturi}},~\bfnm{Marco}\binits{M.}} \AND
  \bauthor{\bsnm{{Doucet}},~\bfnm{Arnaud}\binits{A.}}
(\byear{2013}).
\btitle{{Fast Computation of Wasserstein Barycenters}}.
\bjournal{arXiv e-prints}
\bpages{arXiv:1310.4375}.
\end{barticle}
\endbibitem

\bibitem{lara2018simple}
\begin{bmisc}[author]
\bauthor{\bparticle{de} \bsnm{Lara},~\bfnm{Nathan}\binits{N.}} \AND
  \bauthor{\bsnm{Pineau},~\bfnm{Edouard}\binits{E.}}
(\byear{2018}).
\btitle{A Simple Baseline Algorithm for Graph Classification}.
\end{bmisc}
\endbibitem

\bibitem{Diggle90}
\begin{barticle}[author]
\bauthor{\bsnm{Diggle},~\bfnm{Peter~J.}\binits{P.~J.}}
(\byear{1990}).
\btitle{A Point Process Modelling Approach to Raised Incidence of a Rare
  Phenomenon in the Vicinity of a Prespecified Point}.
\bjournal{Journal of the Royal Statistical Society: Series A (Statistics in
  Society)}
\bvolume{153}
\bpages{349-362}.
\bdoi{10.2307/2982977}
\end{barticle}
\endbibitem

\bibitem{du19}
\begin{bincollection}[author]
\bauthor{\bsnm{Du},~\bfnm{Simon~S}\binits{S.~S.}},
  \bauthor{\bsnm{Hou},~\bfnm{Kangcheng}\binits{K.}},
  \bauthor{\bsnm{Salakhutdinov},~\bfnm{Russ~R}\binits{R.~R.}},
  \bauthor{\bsnm{Poczos},~\bfnm{Barnabas}\binits{B.}},
  \bauthor{\bsnm{Wang},~\bfnm{Ruosong}\binits{R.}} \AND
  \bauthor{\bsnm{Xu},~\bfnm{Keyulu}\binits{K.}}
(\byear{2019}).
\btitle{Graph Neural Tangent Kernel: Fusing Graph Neural Networks with Graph
  Kernels}.
In \bbooktitle{Advances in Neural Information Processing Systems 32}
(\beditor{\bfnm{H.}\binits{H.}~\bsnm{Wallach}},
  \beditor{\bfnm{H.}\binits{H.}~\bsnm{Larochelle}},
  \beditor{\bfnm{A.}\binits{A.}~\bsnm{Beygelzimer}},
  \beditor{\bfnm{F.}\binits{F.}~\bparticle{d'Alch\'{e}} \bsnm{Buc}},
  \beditor{\bfnm{E.}\binits{E.}~\bsnm{Fox}} \AND
  \beditor{\bfnm{R.}\binits{R.}~\bsnm{Garnett}}, eds.)
\bpages{5724--5734}.
\bpublisher{Curran Associates, Inc.}
\end{bincollection}
\endbibitem

\bibitem{Fischer10}
\begin{barticle}[author]
\bauthor{\bsnm{Fischer},~\bfnm{Aur\'elie}\binits{A.}}
(\byear{2010}).
\btitle{Quantization and clustering with {B}regman divergences}.
\bjournal{J. Multivariate Anal.}
\bvolume{101}
\bpages{2207--2221}.
\bmrnumber{2671211}
\end{barticle}
\endbibitem

\bibitem{gao19}
\begin{binproceedings}[author]
\bauthor{\bsnm{Gao},~\bfnm{Feng}\binits{F.}},
  \bauthor{\bsnm{Wolf},~\bfnm{Guy}\binits{G.}} \AND
  \bauthor{\bsnm{Hirn},~\bfnm{Matthew}\binits{M.}}
(\byear{2019}).
\btitle{Geometric Scattering for Graph Data Analysis}.
In \bbooktitle{Proceedings of the 36th International Conference on Machine
  Learning}
(\beditor{\bfnm{Kamalika}\binits{K.}~\bsnm{Chaudhuri}} \AND
  \beditor{\bfnm{Ruslan}\binits{R.}~\bsnm{Salakhutdinov}}, eds.).
\bseries{Proceedings of Machine Learning Research}
\bvolume{97}
\bpages{2122--2131}.
\bpublisher{PMLR}, \baddress{Long Beach, California, USA}.
\end{binproceedings}
\endbibitem

\bibitem{Tran18}
\begin{barticle}[author]
\bauthor{\bsnm{{Hoan Tran}},~\bfnm{Quoc}\binits{Q.}},
  \bauthor{\bsnm{{Vo}},~\bfnm{Van~Tuan}\binits{V.~T.}} \AND
  \bauthor{\bsnm{{Hasegawa}},~\bfnm{Yoshihiko}\binits{Y.}}
(\byear{2018}).
\btitle{{Scale-variant topological information for characterizing the structure
  of complex networks}}.
\bjournal{arXiv e-prints}
\bpages{arXiv:1811.03573}.
\end{barticle}
\endbibitem

\bibitem{Jung15}
\begin{barticle}[author]
\bauthor{\bsnm{Jung},~\bfnm{In-Soo}\binits{I.-S.}},
  \bauthor{\bsnm{Berges},~\bfnm{Mario}\binits{M.}},
  \bauthor{\bsnm{Garrett},~\bfnm{James~H.}\binits{J.~H.}} \AND
  \bauthor{\bsnm{Poczos},~\bfnm{Barnabas}\binits{B.}}
(\byear{2015}).
\btitle{Exploration and evaluation of AR, MPCA and KL anomaly detection
  techniques to embankment dam piezometer data}.
\bjournal{Advanced Engineering Informatics}
\bvolume{29}
\bpages{902 - 917}.
\bnote{Collective Intelligence Modeling, Analysis, and Synthesis for Innovative
  Engineering Decision Making Special Issue of the 1st International Conference
  on Civil and Building Engineering Informatics}.
\bdoi{https://doi.org/10.1016/j.aei.2015.10.002}
\end{barticle}
\endbibitem

\bibitem{Levrard15}
\begin{barticle}[author]
\bauthor{\bsnm{Levrard},~\bfnm{Cl\'ement}\binits{C.}}
(\byear{2015}).
\btitle{Nonasymptotic bounds for vector quantization in Hilbert spaces}.
\bjournal{Ann. Statist.}
\bvolume{43}
\bpages{592--619}.
\bdoi{10.1214/14-AOS1293}
\end{barticle}
\endbibitem

\bibitem{AppendixLevrard15}
\begin{barticle}[author]
\bauthor{\bsnm{Levrard},~\bfnm{Cl\'{e}ment}\binits{C.}}
(\byear{2015}).
\btitle{Supplement to ``Non-asymptotic bounds for vector quantization"}.
\bdoi{10.1214/14-AOS1293SUPP}
\end{barticle}
\endbibitem

\bibitem{Levrard18}
\begin{barticle}[author]
\bauthor{\bsnm{Levrard},~\bfnm{Cl\'ement}\binits{C.}}
(\byear{2018}).
\btitle{Quantization/Clustering: when and why does k-means work}.
\bjournal{Journal de la Soci\'et\'e Fran\c{c}aise de Statistiques}
\bvolume{159}.
\end{barticle}
\endbibitem

\bibitem{Lloyd82}
\begin{barticle}[author]
\bauthor{\bsnm{Lloyd},~\bfnm{Stuart~P.}\binits{S.~P.}}
(\byear{1982}).
\btitle{Least squares quantization in {PCM}}.
\bjournal{IEEE Trans. Inform. Theory}
\bvolume{28}
\bpages{129--137}.
\bdoi{10.1109/TIT.1982.1056489}
\bmrnumber{651807}
\end{barticle}
\endbibitem

\bibitem{imdb}
\begin{binproceedings}[author]
\bauthor{\bsnm{Maas},~\bfnm{Andrew~L.}\binits{A.~L.}},
  \bauthor{\bsnm{Daly},~\bfnm{Raymond~E.}\binits{R.~E.}},
  \bauthor{\bsnm{Pham},~\bfnm{Peter~T.}\binits{P.~T.}},
  \bauthor{\bsnm{Huang},~\bfnm{Dan}\binits{D.}},
  \bauthor{\bsnm{Ng},~\bfnm{Andrew~Y.}\binits{A.~Y.}} \AND
  \bauthor{\bsnm{Potts},~\bfnm{Christopher}\binits{C.}}
(\byear{2011}).
\btitle{Learning Word Vectors for Sentiment Analysis}.
In \bbooktitle{Proceedings of the 49th Annual Meeting of the Association for
  Computational Linguistics: Human Language Technologies}
\bpages{142--150}.
\bpublisher{Association for Computational Linguistics}, \baddress{Portland,
  Oregon, USA}.
\end{binproceedings}
\endbibitem

\bibitem{MacQueen67}
\begin{binproceedings}[author]
\bauthor{\bsnm{MacQueen},~\bfnm{J.}\binits{J.}}
(\byear{1967}).
\btitle{Some methods for classification and analysis of multivariate
  observations}.
In \bbooktitle{Proc. {F}ifth {B}erkeley {S}ympos. {M}ath. {S}tatist. and
  {P}robability ({B}erkeley, {C}alif., 1965/66)}
\bpages{Vol. I: Statistics, pp. 281--297}.
\bpublisher{Univ. California Press, Berkeley, Calif.}
\bmrnumber{0214227}
\end{binproceedings}
\endbibitem

\bibitem{Mendelson03}
\begin{barticle}[author]
\bauthor{\bsnm{Mendelson},~\bfnm{S.}\binits{S.}} \AND
  \bauthor{\bsnm{Vershynin},~\bfnm{R.}\binits{R.}}
(\byear{2003}).
\btitle{Entropy and the combinatorial dimension}.
\bjournal{Invent. Math.}
\bvolume{152}
\bpages{37--55}.
\bdoi{10.1007/s00222-002-0266-3}
\bmrnumber{1965359}
\end{barticle}
\endbibitem

\bibitem{Mikolov13}
\begin{bincollection}[author]
\bauthor{\bsnm{Mikolov},~\bfnm{Tomas}\binits{T.}},
  \bauthor{\bsnm{Sutskever},~\bfnm{Ilya}\binits{I.}},
  \bauthor{\bsnm{Chen},~\bfnm{Kai}\binits{K.}},
  \bauthor{\bsnm{Corrado},~\bfnm{Greg~S}\binits{G.~S.}} \AND
  \bauthor{\bsnm{Dean},~\bfnm{Jeff}\binits{J.}}
(\byear{2013}).
\btitle{Distributed Representations of Words and Phrases and their
  Compositionality}.
In \bbooktitle{Advances in Neural Information Processing Systems 26}
(\beditor{\bfnm{C.~J.~C.}\binits{C.~J.~C.}~\bsnm{Burges}},
  \beditor{\bfnm{L.}\binits{L.}~\bsnm{Bottou}},
  \beditor{\bfnm{M.}\binits{M.}~\bsnm{Welling}},
  \beditor{\bfnm{Z.}\binits{Z.}~\bsnm{Ghahramani}} \AND
  \beditor{\bfnm{K.~Q.}\binits{K.~Q.}~\bsnm{Weinberger}}, eds.)
\bpages{3111--3119}.
\bpublisher{Curran Associates, Inc.}
\end{bincollection}
\endbibitem

\bibitem{sklearn}
\begin{barticle}[author]
\bauthor{\bsnm{Pedregosa},~\bfnm{F.}\binits{F.}},
  \bauthor{\bsnm{Varoquaux},~\bfnm{G.}\binits{G.}},
  \bauthor{\bsnm{Gramfort},~\bfnm{A.}\binits{A.}},
  \bauthor{\bsnm{Michel},~\bfnm{V.}\binits{V.}},
  \bauthor{\bsnm{Thirion},~\bfnm{B.}\binits{B.}},
  \bauthor{\bsnm{Grisel},~\bfnm{O.}\binits{O.}},
  \bauthor{\bsnm{Blondel},~\bfnm{M.}\binits{M.}},
  \bauthor{\bsnm{Prettenhofer},~\bfnm{P.}\binits{P.}},
  \bauthor{\bsnm{Weiss},~\bfnm{R.}\binits{R.}},
  \bauthor{\bsnm{Dubourg},~\bfnm{V.}\binits{V.}},
  \bauthor{\bsnm{Vanderplas},~\bfnm{J.}\binits{J.}},
  \bauthor{\bsnm{Passos},~\bfnm{A.}\binits{A.}},
  \bauthor{\bsnm{Cournapeau},~\bfnm{D.}\binits{D.}},
  \bauthor{\bsnm{Brucher},~\bfnm{M.}\binits{M.}},
  \bauthor{\bsnm{Perrot},~\bfnm{M.}\binits{M.}} \AND
  \bauthor{\bsnm{Duchesnay},~\bfnm{E.}\binits{E.}}
(\byear{2011}).
\btitle{Scikit-learn: Machine Learning in {P}ython}.
\bjournal{Journal of Machine Learning Research}
\bvolume{12}
\bpages{2825--2830}.
\end{barticle}
\endbibitem

\bibitem{Rabin12}
\begin{binproceedings}[author]
\bauthor{\bsnm{Rabin},~\bfnm{Julien}\binits{J.}},
  \bauthor{\bsnm{Peyr{\'e}},~\bfnm{Gabriel}\binits{G.}},
  \bauthor{\bsnm{Delon},~\bfnm{Julie}\binits{J.}} \AND
  \bauthor{\bsnm{Bernot},~\bfnm{Marc}\binits{M.}}
(\byear{2012}).
\btitle{Wasserstein Barycenter and Its Application to Texture Mixing}.
In \bbooktitle{Scale Space and Variational Methods in Computer Vision}
(\beditor{\bfnm{Alfred~M.}\binits{A.~M.}~\bsnm{Bruckstein}},
  \beditor{\bfnm{Bart~M.}\binits{B.~M.}~\bparticle{ter} \bsnm{Haar~Romeny}},
  \beditor{\bfnm{Alexander~M.}\binits{A.~M.}~\bsnm{Bronstein}} \AND
  \beditor{\bfnm{Michael~M.}\binits{M.~M.}~\bsnm{Bronstein}}, eds.)
\bpages{435--446}.
\bpublisher{Springer Berlin Heidelberg}, \baddress{Berlin, Heidelberg}.
\end{binproceedings}
\endbibitem

\bibitem{Rakhlin11}
\begin{barticle}[author]
\bauthor{\bsnm{{Rakhlin}},~\bfnm{Alexander}\binits{A.}},
  \bauthor{\bsnm{{Shamir}},~\bfnm{Ohad}\binits{O.}} \AND
  \bauthor{\bsnm{{Sridharan}},~\bfnm{Karthik}\binits{K.}}
(\byear{2011}).
\btitle{{Making Gradient Descent Optimal for Strongly Convex Stochastic
  Optimization}}.
\bjournal{arXiv e-prints}
\bpages{arXiv:1109.5647}.
\end{barticle}
\endbibitem

\bibitem{Renner15}
\begin{barticle}[author]
\bauthor{\bsnm{Renner},~\bfnm{Ian~W.}\binits{I.~W.}},
  \bauthor{\bsnm{Elith},~\bfnm{Jane}\binits{J.}},
  \bauthor{\bsnm{Baddeley},~\bfnm{Adrian}\binits{A.}},
  \bauthor{\bsnm{Fithian},~\bfnm{William}\binits{W.}},
  \bauthor{\bsnm{Hastie},~\bfnm{Trevor}\binits{T.}},
  \bauthor{\bsnm{Phillips},~\bfnm{Steven~J.}\binits{S.~J.}},
  \bauthor{\bsnm{Popovic},~\bfnm{Gordana}\binits{G.}} \AND
  \bauthor{\bsnm{Warton},~\bfnm{David~I.}\binits{D.~I.}}
(\byear{2015}).
\btitle{Point process models for presence-only analysis}.
\bjournal{Methods in Ecology and Evolution}
\bvolume{6}
\bpages{366-379}.
\bdoi{10.1111/2041-210X.12352}
\end{barticle}
\endbibitem

\bibitem{Royer19}
\begin{barticle}[author]
\bauthor{\bsnm{Royer},~\bfnm{Martin}\binits{M.}},
  \bauthor{\bsnm{Chazal},~\bfnm{Fr{\'e}d{\'e}ric}\binits{F.}},
  \bauthor{\bsnm{Levrard},~\bfnm{Cl{\'e}ment}\binits{C.}},
  \bauthor{\bsnm{Ike},~\bfnm{Yuichi}\binits{Y.}} \AND
  \bauthor{\bsnm{Umeda},~\bfnm{Yuhei}\binits{Y.}}
(\byear{2020}).
\btitle{{ATOL: Measure Vectorisation for Automatic Topologically-Oriented
  Learning}}.
\bnote{working paper or preprint}.
\end{barticle}
\endbibitem

\bibitem{rozemberczki2020api}
\begin{bmisc}[author]
\bauthor{\bsnm{Rozemberczki},~\bfnm{Benedek}\binits{B.}},
  \bauthor{\bsnm{Kiss},~\bfnm{Oliver}\binits{O.}} \AND
  \bauthor{\bsnm{Sarkar},~\bfnm{Rik}\binits{R.}}
(\byear{2020}).
\btitle{An API Oriented Open-source Python Framework for Unsupervised Learning
  on Graphs}.
\end{bmisc}
\endbibitem

\bibitem{Shirota17}
\begin{barticle}[author]
\bauthor{\bsnm{{Shirota}},~\bfnm{Shinichiro}\binits{S.}},
  \bauthor{\bsnm{{Gelfand}},~\bfnm{Alan~E}\binits{A.~E.}} \AND
  \bauthor{\bsnm{{Mateu}},~\bfnm{Jorge}\binits{J.}}
(\byear{2017}).
\btitle{{Analyzing Car Thefts and Recoveries with Connections to Modeling
  Origin-Destination Point Patterns}}.
\bjournal{arXiv e-prints}
\bpages{arXiv:1701.05863}.
\end{barticle}
\endbibitem

\bibitem{Monteleoni16}
\begin{binproceedings}[author]
\bauthor{\bsnm{Tang},~\bfnm{Cheng}\binits{C.}} \AND
  \bauthor{\bsnm{Monteleoni},~\bfnm{Claire}\binits{C.}}
(\byear{2016}).
\btitle{On Lloyd's Algorithm: New Theoretical Insights for Clustering in
  Practice}.
In \bbooktitle{Proceedings of the 19th International Conference on Artificial
  Intelligence and Statistics}
(\beditor{\bfnm{Arthur}\binits{A.}~\bsnm{Gretton}} \AND
  \beditor{\bfnm{Christian~C.}\binits{C.~C.}~\bsnm{Robert}}, eds.).
\bseries{Proceedings of Machine Learning Research}
\bvolume{51}
\bpages{1280--1289}.
\bpublisher{PMLR}, \baddress{Cadiz, Spain}.
\end{binproceedings}
\endbibitem

\bibitem{tsitsulin2018}
\begin{barticle}[author]
\bauthor{\bsnm{Tsitsulin},~\bfnm{Anton}\binits{A.}},
  \bauthor{\bsnm{Mottin},~\bfnm{Davide}\binits{D.}},
  \bauthor{\bsnm{Karras},~\bfnm{Panagiotis}\binits{P.}},
  \bauthor{\bsnm{Bronstein},~\bfnm{Alexander}\binits{A.}} \AND
  \bauthor{\bsnm{Müller},~\bfnm{Emmanuel}\binits{E.}}
(\byear{2018}).
\btitle{NetLSD}.
\bjournal{Proceedings of the 24th ACM SIGKDD International Conference on
  Knowledge Discovery and Data Mining}.
\bdoi{10.1145/3219819.3219991}
\end{barticle}
\endbibitem

\bibitem{vanderVaart09}
\begin{bincollection}[author]
\bauthor{\bparticle{van~der} \bsnm{Vaart},~\bfnm{Aad}\binits{A.}} \AND
  \bauthor{\bsnm{Wellner},~\bfnm{Jon~A.}\binits{J.~A.}}
(\byear{2009}).
\btitle{A note on bounds for {VC} dimensions}.
In \bbooktitle{High dimensional probability {V}: the {L}uminy volume}.
\bseries{Inst. Math. Stat. (IMS) Collect.}
\bvolume{5}
\bpages{103--107}.
\bpublisher{Inst. Math. Statist., Beachwood, OH}.
\bdoi{10.1214/09-IMSCOLL508}
\bmrnumber{2797943}
\end{bincollection}
\endbibitem

\bibitem{verma2017hunt}
\begin{binproceedings}[author]
\bauthor{\bsnm{Verma},~\bfnm{Saurabh}\binits{S.}} \AND
  \bauthor{\bsnm{Zhang},~\bfnm{Zhi-Li}\binits{Z.-L.}}
(\byear{2017}).
\btitle{Hunt For The Unique, Stable, Sparse And Fast Feature Learning On
  Graphs}.
In \bbooktitle{Advances in Neural Information Processing Systems}
\bpages{88--98}.
\end{binproceedings}
\endbibitem

\bibitem{Yanardag15}
\begin{binproceedings}[author]
\bauthor{\bsnm{Yanardag},~\bfnm{Pinar}\binits{P.}} \AND
  \bauthor{\bsnm{Vishwanathan},~\bfnm{S.~V.~N.}\binits{S.~V.~N.}}
(\byear{2015}).
\btitle{Deep Graph Kernels}.
In \bbooktitle{Proceedings of the 21th ACM SIGKDD International Conference on
  Knowledge Discovery and Data Mining}.
\bseries{KDD ’15}
\bpages{1365–1374}.
\bpublisher{Association for Computing Machinery}, \baddress{New York, NY, USA}.
\bdoi{10.1145/2783258.2783417}
\end{binproceedings}
\endbibitem

\bibitem{zhang2018retgk}
\begin{binproceedings}[author]
\bauthor{\bsnm{Zhang},~\bfnm{Zhen}\binits{Z.}},
  \bauthor{\bsnm{Wang},~\bfnm{Mianzhi}\binits{M.}},
  \bauthor{\bsnm{Xiang},~\bfnm{Yijian}\binits{Y.}},
  \bauthor{\bsnm{Huang},~\bfnm{Yan}\binits{Y.}} \AND
  \bauthor{\bsnm{Nehorai},~\bfnm{Arye}\binits{A.}}
(\byear{2018}).
\btitle{{RetGK: Graph Kernels based on Return Probabilities of Random Walks}}.
In \bbooktitle{Advances in Neural Information Processing Systems}
\bpages{3968--3978}.
\end{binproceedings}
\endbibitem

\bibitem{qi2019}
\begin{bincollection}[author]
\bauthor{\bsnm{Zhao},~\bfnm{Qi}\binits{Q.}} \AND
  \bauthor{\bsnm{Wang},~\bfnm{Yusu}\binits{Y.}}
(\byear{2019}).
\btitle{Learning metrics for persistence-based summaries and applications for
  graph classification}.
In \bbooktitle{Advances in Neural Information Processing Systems 32}
(\beditor{\bfnm{H.}\binits{H.}~\bsnm{Wallach}},
  \beditor{\bfnm{H.}\binits{H.}~\bsnm{Larochelle}},
  \beditor{\bfnm{A.}\binits{A.}~\bsnm{Beygelzimer}},
  \beditor{\bfnm{F.}\binits{F.}~\bparticle{d'Alch\'{e}} \bsnm{Buc}},
  \beditor{\bfnm{E.}\binits{E.}~\bsnm{Fox}} \AND
  \beditor{\bfnm{R.}\binits{R.}~\bsnm{Garnett}}, eds.)
\bpages{9855--9866}.
\bpublisher{Curran Associates, Inc.}
\end{bincollection}
\endbibitem

\bibitem{Zielinski18}
\begin{barticle}[author]
\bauthor{\bsnm{{Zieli{\'n}ski}},~\bfnm{Bartosz}\binits{B.}},
  \bauthor{\bsnm{{Lipi{\'n}ski}},~\bfnm{Micha{\l}}\binits{M.}},
  \bauthor{\bsnm{{Juda}},~\bfnm{Mateusz}\binits{M.}},
  \bauthor{\bsnm{{Zeppelzauer}},~\bfnm{Matthias}\binits{M.}} \AND
  \bauthor{\bsnm{{D{\l}otko}},~\bfnm{Pawe{\l}}\binits{P.}}
(\byear{2018}).
\btitle{{Persistence Bag-of-Words for Topological Data Analysis}}.
\bjournal{arXiv e-prints}
\bpages{arXiv:1812.09245}.
\end{barticle}
\endbibitem

\end{thebibliography}

\appendix 
\section{Technical proofs}
\label{sec:proof_tecsec}
To ease understanding the statements of the results are recalled before their proofs.
\subsection{Proofs for Section \ref{sec:proof_sec_mean_measure_quantization}}
A key ingredient of the proofs of Lemma \ref{lem:Lloyd_one_iteration} and \ref{lem:onestep_McQueen} is the following Lemma \ref{lem:gradient_margin}, ensuring that around optimal codebooks the expected gradients of Algorithms 1 and 2 are almost Lipschitz. 
\begin{lem}\label{lem:gradient_margin}
Assume that $\E(X) \in \mathcal{M}(R,M)$ satisfies a margin condition with radius $r_0$, and denote by $R_0 = \frac{Br_0}{16\sqrt{2}R}$. Let $\cb^* \in \mathcal{C}_{opt}$, and $\cb$ such that $\|\cb-\cb^*\| \leq R_0$. Then
\begin{itemize}
\item $\sum_{j=1}^{k}|p_j(\cb) - p_j(\cb^*)| \leq \frac{p_{min}}{64}$,
\item $\sum_{j=1}^{k} \| \int(u-c_j)\1_{W_j(\cb)}(u)\E(X)(du) - p_j(\cb^*) (c_j^*-c_j)\| \leq \frac{p_{min}}{8\sqrt{2}}\|\cb - \cb^*\|$.
\end{itemize}
\end{lem}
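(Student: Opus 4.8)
The plan is to control both sums by the quantity $\E(X)(\B(N(\cb^*),t))$ for a suitable scale $t$, exploiting the margin condition together with the two structural constants $B$ and $p_{min}$ from Proposition \ref{prop:B_p_min}. The geometric intuition is that moving the codebook from $\cb^*$ to $\cb$ with $\|\cb - \cb^*\| \le R_0$ only moves the Voronoi boundary $N(\cb)$ a controlled amount, so the symmetric differences $W_j(\cb) \triangle W_j(\cb^*)$ are contained in a thin tube around $N(\cb^*)$, whose mean-measure mass is linear in the tube width by the margin condition. Since $R_0 = \tfrac{B r_0}{16\sqrt 2 R}$, the relevant tube width will stay below $r_0$, so the margin inequality applies.

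First I would prove a geometric lemma: if $\|\cb - \cb^*\| \le \rho$, then any point $u$ that lies in $W_i(\cb)$ but $W_j(\cb^*)$ (with $i \ne j$) must be within distance $O\!\left(\tfrac{R\rho}{B}\right)$ of $N(\cb^*)$. The reasoning: such a $u$ satisfies $\|u - c^*_j\| \le \|u - c^*_i\|$ but $\|u - c_i\| \le \|u - c_j\|$; writing $c_i = c^*_i + \varepsilon_i$, $c_j = c^*_j + \varepsilon_j$ with $\|\varepsilon_i\|,\|\varepsilon_j\| \le \rho$, expanding the squared-norm inequalities and using $\|u\| \le R$, $\|c^*_i\|,\|c^*_j\| \le R$, one gets that $\big|\,\|u-c^*_i\|^2 - \|u-c^*_j\|^2\big| \le C R \rho$. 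Dividing by $\|c^*_i - c^*_j\| \ge B$ converts this into a bound on the difference of (unsquared) distances, i.e. $u$ is $O(R\rho/B)$-close to the bisecting hyperplane of $c^*_i, c^*_j$, hence close to $N(\cb^*)$. Thus $\bigcup_{i \ne j}(W_i(\cb)\cap W_j(\cb^*)) \subset \B(N(\cb^*), t_\rho)$ with $t_\rho = \Theta(R\rho/B)$, and with $\rho = R_0$ this gives $t_{R_0} = \Theta(r_0)$, which can be taken $\le r_0$ after pinning the constants (this is where the precise value $16\sqrt 2$ in $R_0$ is engineered).

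For the first bullet: $|p_j(\cb) - p_j(\cb^*)| = |\E(X)(W_j(\cb)) - \E(X)(W_j(\cb^*))| \le \E(X)(W_j(\cb) \triangle W_j(\cb^*))$, and summing over $j$, each symmetric-difference region is covered (counting multiplicity at most twice, say) by the tube $\B(N(\cb^*), t_{R_0})$, so $\sum_j |p_j(\cb)-p_j(\cb^*)| \le 2\,\E(X)(\B(N(\cb^*), t_{R_0})) \le 2 \cdot \tfrac{B p_{min}}{128 R^2} t_{R_0}$ by the margin condition; choosing the constant in $R_0$ so that $t_{R_0} \le \tfrac{R^2}{?}$ makes the right-hand side $\le p_{min}/64$. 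For the second bullet I would split $\int (u-c_j)\1_{W_j(\cb)} \E(X)(du)$ and compare termwise with $p_j(\cb^*)(c^*_j - c_j)$: write the integrand over $W_j(\cb^*)$ first, where $\int_{W_j(\cb^*)} (u - c_j)\E(X)(du) = \int_{W_j(\cb^*)}(u-c^*_j)\E(X)(du) + p_j(\cb^*)(c^*_j - c_j)$, and the first piece on the right is controlled because $c^*_j$ is the centroid of $W_j(\cb^*)$ under $\E(X)$ — indeed optimality of $\cb^*$ forces the centroid (stationarity) condition $\int_{W_j(\cb^*)}(u - c^*_j)\E(X)(du) = 0$. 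The remaining error comes from replacing $W_j(\cb)$ by $W_j(\cb^*)$, i.e. integrating $(u - c_j)$ over $W_j(\cb)\triangle W_j(\cb^*)$; since $\|u - c_j\| \le 2R$ on $\B(0,R)$ and these regions sit in the tube, this contributes at most $2R \cdot \E(X)(\B(N(\cb^*), t_{R_0})) \le 2R \cdot \tfrac{B p_{min}}{128 R^2} t_{R_0}$, and with $t_{R_0} = \Theta(R \|\cb-\cb^*\| / B)$ in fact I would redo this last step with the tube width for the actual $\rho = \|\cb - \cb^*\|$ rather than $R_0$, to get the claimed bound proportional to $\|\cb - \cb^*\|$; the constants work out to $\tfrac{p_{min}}{8\sqrt 2}\|\cb - \cb^*\|$ after plugging in $t_\rho \lesssim R\rho/B$ and cancelling.

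The main obstacle is the bookkeeping in the geometric lemma: getting a \emph{clean linear} dependence of the tube width on $\|\cb - \cb^*\|$ with the right constant (so that $t_{R_0} \le r_0$ exactly, licensing the margin condition, and so that all the $128, 64, 8\sqrt 2$ factors line up) requires carefully expanding the Voronoi inequalities and using $B$, $R$ in the denominators without losing a factor. A secondary subtlety is handling the skeleton $N(\cb)$ of the \emph{perturbed} codebook versus $N(\cb^*)$ — one must argue that points reassigned between cells of $\cb^*$ are near $N(\cb^*)$ (not merely near $N(\cb)$), which the argument above does directly by working with the $c^*$-distances. Everything else is a routine application of the margin condition (Definition \ref{def:margincondition}) and the first-order optimality of $\cb^*$, which are available from the excerpt.
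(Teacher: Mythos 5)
Your plan is correct and is precisely the argument the paper relies on: the paper gives no proof of this lemma beyond citing \cite[Section A.3]{AppendixLevrard15}, and that cited proof is exactly your scheme — reassigned points lie in a tube of width $O(R\|\cb-\cb^*\|/B)$ around $N(\cb^*)$ (via the bisector computation), the margin condition bounds the tube's mass, and the centroid condition $\int_{W_j(\cb^*)}(u-c_j^*)\E(X)(du)=0$ handles the main term in the second bullet. The constants do line up as you anticipate (e.g.\ the tube width at $\rho=R_0$ is $r_0/8\le r_0$, and the requirement $Br_0\le 8R^2$ for the first bullet is automatic since the margin condition itself forces $r_0\le 4R$).
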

The proof of Lemma \ref{lem:gradient_margin} follows from \cite[Section A.3]{AppendixLevrard15}.
\subsubsection{Proof of Lemma \ref{lem:Lloyd_one_iteration}}
\label{sec:proof_lemma_Lloyd_one_iteration}
\begin{lem*}[\ref{lem:Lloyd_one_iteration}]
Assume that $\cb^{(0)} \in \B(\cb^*,R_0)$. Then, with probability larger than $1-9e^{ -c_1 n p_{min}/M} - e^{-x}$, for $n$ large enough, we have, for every $t$,
\begin{align*}
\|\cb^{(t+1)} - \cb^*\|^2 \leq \frac{3}{4} \|\cb^{(t)} - \cb^*\|^2 + \frac{K}{p_{min}^2} D_n^2, \qquad \mbox{\eqref{eq:lloyd_distorsion}} 
\end{align*}
where $D_n = \frac{C R M}{\sqrt{n}}  \left (k \sqrt{d\log(k)}+ \sqrt{x} \right )$ and $C,K$ are positive constants.
\end{lem*}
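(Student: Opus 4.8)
The plan is to show that a single batch update is, on a high‑probability event, an approximate linear contraction towards $\cb^*$, the residual splitting into a deterministic part governed by the margin condition (through Lemma \ref{lem:gradient_margin}) and a stochastic part governed by a uniform deviation inequality between $\bar X_n$ and $\E(X)$ over Voronoi partitions. \emph{Step 1 (algebraic rewriting).} First I would write the centroid update of Algorithm \ref{lst:lloyd} as $c_j^{(t+1)} = c_j^{(t)} - \bar\nabla_j(\cb^{(t)})/\bar p_j(\cb^{(t)})$, where $\bar p_j(\cb) = \bar X_n(W_j(\cb))$ and $\bar\nabla_j(\cb) = \int(c_j - u)\1_{W_j(\cb)}(u)\bar X_n(du)$; then, splitting $\bar\nabla_j$ and $\bar p_j$ into their $\E(X)$-counterparts $\nabla_j,p_j$ plus stochastic remainders, one obtains for each $j$
\[
c_j^{(t+1)} - c_j^* = \Bigl(1 - \tfrac{p_j(\cb^*)}{\bar p_j(\cb^{(t)})}\Bigr)(c_j^{(t)} - c_j^*) \;-\; \tfrac{\rho_j + \varepsilon_j}{\bar p_j(\cb^{(t)})},
\]
where $\rho_j = \nabla_j(\cb^{(t)}) - p_j(\cb^*)(c_j^{(t)} - c_j^*)$ is exactly the population linearization error bounded by Lemma \ref{lem:gradient_margin} (which encodes the centroidal optimality of $\cb^*$), and $\varepsilon_j = \bar\nabla_j(\cb^{(t)}) - \nabla_j(\cb^{(t)})$ is purely stochastic.

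\emph{Step 2 (deterministic control via the margin condition).} Since $\|\cb^{(t)} - \cb^*\| \le R_0$, Lemma \ref{lem:gradient_margin} gives $\sum_j\|\rho_j\| \le \frac{p_{min}}{8\sqrt 2}\|\cb^{(t)} - \cb^*\|$ and $\sum_j|p_j(\cb^{(t)}) - p_j(\cb^*)| \le \frac{p_{min}}{64}$, hence $p_j(\cb^{(t)}) \ge \frac{63}{64}p_{min}$. On the event where every $|\bar p_j(\cb^{(t)}) - p_j(\cb^{(t)})| \le \frac{p_{min}}{64}$ this yields $\bar p_j(\cb^{(t)}) \ge \frac12 p_{min}$ and $\bigl|1 - p_j(\cb^*)/\bar p_j(\cb^{(t)})\bigr| = |\bar p_j(\cb^{(t)}) - p_j(\cb^*)|/\bar p_j(\cb^{(t)}) \le \frac{1}{16}$, so the linear term in Step 1 has $\ell^2$-norm at most $\frac{1}{16}\|\cb^{(t)} - \cb^*\|$, and the $\rho$-term, once divided by $\bar p_j \ge \frac12 p_{min}$ and summed over the $k$ blocks, is at most $\frac{1}{4\sqrt 2}\|\cb^{(t)} - \cb^*\|$.

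\emph{Step 3 (uniform deviation inequalities).} The good event I need is that, uniformly over codebooks $\cb$, both $\sum_j|\bar X_n(W_j(\cb)) - \E(X)(W_j(\cb))|$ and $\sum_j\|\int u\,\1_{W_j(\cb)}(u)(\bar X_n - \E(X))(du)\|$ are $\le C' D_n$, and also (for Step 2) below a small constant multiple of $p_{min}$. I would get this by adapting the $k$-means deviation inequalities of \cite{Levrard15,Levrard18} to the mean-measure setting: a symmetrization/covering-number argument over the class of (coordinate-weighted) Voronoi-cell indicators — of metric entropy of order $k\log k$, spread over $d$ coordinates, with total mass $\le M$ on $\B(0,R)$ — produces an expected deviation $\lesssim RMk\sqrt{d\log k}/\sqrt n$, while a bounded-differences inequality (changing one $X_i$ moves either sum by $O(M/n)$) adds the $RM\sqrt x/\sqrt n$ term, so the sum matches $D_n$ up to a constant; taking the deviation parameter of order $np_{min}/M$ for the handful of auxiliary events (nine of them, for the various cell masses and first moments used in Steps 2 and 4) accounts for the $9e^{-c_1 np_{min}/M}$ probability loss. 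The hypothesis $X \in \mathcal{M}_{N_{max}}(R,M)$ makes $\bar X_n$ finitely supported, so the updates and all these quantities are well defined.

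\emph{Step 4 (assembling and iterating).} Inserting Steps 2 and 3 into the identity of Step 1, dividing by $\bar p_j(\cb^{(t)}) \ge \frac12 p_{min}$, and combining the three coordinate-block vectors by Minkowski's inequality together with $\|\cdot\|_2 \le \|\cdot\|_1$, one gets $\|\cb^{(t+1)} - \cb^*\| \le \alpha\|\cb^{(t)} - \cb^*\| + (C''/p_{min})D_n$ for an absolute constant $\alpha = \frac{1}{16} + \frac{1}{4\sqrt 2} < \frac13$; squaring and using $2ab \le \lambda a^2 + \lambda^{-1}b^2$ with $\lambda$ chosen so that $\alpha^2(1+\lambda) = \tfrac34$ then gives \eqref{eq:lloyd_distorsion} with $K = (1+\lambda^{-1})(C'')^2$. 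Finally, once $n$ is large enough that $\tfrac{K}{p_{min}^2}D_n^2 \le \tfrac14 R_0^2$, the right-hand side of \eqref{eq:lloyd_distorsion} does not exceed $R_0^2$ whenever $\|\cb^{(t)} - \cb^*\| \le R_0$, so by induction $\cb^{(t)} \in \B(\cb^*,R_0)$ for all $t$ and the one-step estimate holds at every iteration on the single good event of Step 3. The hard part will be Step 3 — pinning down the uniform-over-Voronoi-partitions deviation with the sharp $k\sqrt{d\log k}$ factor and, simultaneously, controlling enough auxiliary events to keep every empirical cell mass bounded away from $0$ and near its population value, which is exactly what costs the $9e^{-c_1 np_{min}/M}$ probability and the lower bound on $n$; Steps 1, 2 and 4 are essentially bookkeeping once Lemma \ref{lem:gradient_margin} and the deviation bound are in hand.
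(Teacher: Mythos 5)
Your proposal is correct and rests on exactly the same two pillars as the paper's proof — Lemma \ref{lem:gradient_margin} for the deterministic linearization around $\cb^*$ under the margin condition, and a uniform-over-codebooks concentration bound (the paper's Lemma \ref{lem:concentration_Lloyd}, proved by symmetrization, the $O(kd\log k)$ VC bound for Voronoi cells, and a bounded-differences step, using $N_{max}$ exactly as you indicate) — followed by the same induction keeping $\cb^{(t)} \in \B(\cb^*,R_0)$. Where you genuinely diverge is in the one-step estimate itself: the paper expands the square, $\|\hat m(\cb)-\cb^*\|^2 = \|\cb-\cb^*\|^2 + 2\langle \hat m(\cb)-\cb,\cb-\cb^*\rangle + \|\hat m(\cb)-\cb\|^2$, and obtains $3/4$ from the near-cancellation of the cross term $-2\sum_j \frac{p_j(\cb^*)}{\hat p_j(\cb)}\|c_j-c_j^*\|^2$ against the quadratic term, at the cost of juggling four auxiliary constants $K_1,\dots,K_4$ fixed by numerical inspection. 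You instead exploit the exact affine identity $c_j^{(t+1)}-c_j^* = \bigl(1-p_j(\cb^*)/\hat p_j\bigr)(c_j^{(t)}-c_j^*) - (\rho_j+\varepsilon_j)/\hat p_j$ and the triangle inequality to get a contraction factor $\alpha = \tfrac1{16}+\tfrac1{4\sqrt2} < \tfrac13$ directly on the norm, then square; your constants ($|1-p_j(\cb^*)/\hat p_j|\le 1/31\le 1/16$, $\hat p_j \ge \tfrac12 p_{min}$, and $\|\cdot\|_2\le\|\cdot\|_1$ for the $\rho$-block) all check out and comfortably yield $3/4$ after the weighted Cauchy--Schwarz step. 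This route is cleaner and less fragile than the paper's constant-balancing, at no loss of sharpness in $n$, $k$, or $p_{min}$. The only part you leave as a plan rather than a proof is the concentration lemma itself, which you correctly flag as the hard technical ingredient and for which your sketched ingredients match the paper's; one small looseness (shared with the paper) is that the condition $\frac{K}{p_{min}^2}D_n^2 \le \tfrac14 R_0^2$ needed for the induction depends on $x$ as well as $n$, so it is really guaranteed on a separate high-probability event rather than by "$n$ large enough" alone.
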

We adopt the following notation: for any $\cb \in \B(0,R)^k$, we denote by $\hat{p}_j(\cb) = \bar{X}_n(W_j(\cb))$, as well as $p_j(\cb) = \E(X)(W_j(\cb))$. Moreover, we denote by $\hat{m}(\cb)$ (resp. $m(\cb))$ the codebooks satisfying, for $j \in [\![1, k]\!]$, 
\begin{align*}
\hat{m}(\cb)_j  = \frac{ \int  u \1_{W_j(\cb)}(u)\bar{X}_n(du)}{\hat{p}_j(\cb)}, \quad 
m(\cb)_j  = \frac{ \int u \1_{W_j(\cb)}(u) \E(X)(du)}{p_j(\cb)},
\end{align*}
if $\hat{p}_j(\cb)>0$ (resp. $p_j(\cb) >0$), and $\hat{m}(\cb)_j = 0$ (resp. $m(\cb)_j=0$) if $\hat{p}_j(\cb)=0$ (resp. $p_j(\cb)=0$). The proof of Lemma \ref{lem:Lloyd_one_iteration} will make use of the following concentration lemma.
\begin{lem}\label{lem:concentration_Lloyd}
With probability larger than $1-8e^{-x}$, for all $\cb \in \B(0,R)^k$, 
\begin{align*}
\hat{p}_j(\cb) & \leq p_j(\cb) + \sqrt{\frac{4Mc_0kd \log(k)  \log( 2 n N_{max})}{n} + \frac{4Mx}{n}}\sqrt{p_j(\cb)} \\
\hat{p}_j(\cb) & \begin{multlined}[t]\geq p_j(\cb) - \frac{4M c_0 kd \log(k) \log({2nN_{max})}}{n} - \frac{4Mx}{n} \\ - \sqrt{\frac{4Mc_0kd \log(k) \log({2nN_{max}})}{n} + \frac{4Mx}{n}}\sqrt{p_j(\cb)},
\end{multlined} 
\end{align*}
where $c_0$ is an absolute constant. Moreover, with probability larger than $1-e^{-x}$, we have
\begin{multline*}
\sup_{\cb \in \B(0,R)^k} \left \| \left (    \int   (c_j-u)\1_{W_j(\cb)}(u) (\bar{X}_n - \mathbb{E}(X))(du)  \right )_{j=1, \hdots, k}  \right \| \\ 
\leq \frac{C R M}{\sqrt{n}}  \left (k \sqrt{d\log(k)}+ \sqrt{x} \right ), 
\end{multline*}
where $C$ is a constant.
\end{lem}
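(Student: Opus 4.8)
The plan is to read both displays of Lemma~\ref{lem:concentration_Lloyd} as uniform (relative) deviation estimates for the empirical process indexed by the Voronoi cells of $k$-point codebooks, and to control it through the combinatorial complexity of that family. Since $\Prob(X\in\M_{N_{max}}(R,M))=1$, I would write each sample measure as $X_i=\sum_{m=1}^{N_{max}}w_{i,m}\delta_{y_{i,m}}$ with $w_{i,m}\ge 0$, $\sum_m w_{i,m}\le M$ and $y_{i,m}\in\B(0,R)$, so that $\hat p_j(\cb)=\frac1n\sum_{i,m}w_{i,m}\1_{W_j(\cb)}(y_{i,m})$ is a \emph{weighted} empirical average over at most $nN_{max}$ atoms, while $p_j(\cb)=\E\!\left(X(W_j(\cb))\right)$. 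The key structural fact is that each Voronoi cell $W_j(\cb)$ is an intersection of at most $k-1$ bisector halfspaces, so the class $\mathcal W=\{W_j(\cb):\cb\in\B(0,R)^k,\ 1\le j\le k\}$ has VC dimension $V=O(kd\log k)$ (an intersection of $k$ members of the VC-$(d{+}1)$ class of halfspaces), and a single uniform statement over $\mathcal W$ handles all coordinates $j$ and all $\cb$ simultaneously.

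For the first two inequalities I would apply a Bernstein-type relative deviation inequality for $[0,M]$-valued observations indexed by a VC class. The variance input is $\Var\!\left(X(W_j(\cb))\right)\le M\,\E\!\left(X(W_j(\cb))\right)=M\,p_j(\cb)$, valid because $0\le X(W_j(\cb))\le M$. Symmetrizing with an independent ghost sample (of at most $nN_{max}$ further atoms) reduces the uniform control to a union bound over the distinct cell-patterns $\mathcal W$ cuts on the combined $\le 2nN_{max}$ atoms; by the Sauer--Shelah lemma their number is at most $(2nN_{max})^{V}$, contributing the factor $V\log(2nN_{max})\asymp kd\log(k)\log(2nN_{max})$. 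Because the deviation estimate is of relative (ratio) type, the variance bound $Mp_j(\cb)$ enters through $\sqrt{p_j(\cb)}$ rather than through a crude $\sqrt{M}$, and one obtains exactly the two-sided bounds as stated; the overall factor $8$ and the $e^{-x}$ tail are the constants of the classical Vapnik--Chervonenkis estimate, applied in both directions.

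For the last inequality I would treat $Z:=\sup_{\cb}\bigl\|\bigl(\int(c_j-u)\1_{W_j(\cb)}(u)(\bar X_n-\E(X))(du)\bigr)_{j\le k}\bigr\|$ by the bounded-differences method for the tail and by chaining for the mean. Since the $W_j(\cb)$ partition $\R^d$ and $\|c_j-u\|\le 2R$ on $\B(0,R)$, for any single measure $\sum_j\bigl\|\int(c_j-u)\1_{W_j(\cb)}(u)X_i(du)\bigr\|^2\le 4R^2\sum_j X_i(W_j(\cb))^2\le 4R^2\bigl(\sum_j X_i(W_j(\cb))\bigr)^2\le 4R^2M^2$, so replacing one $X_i$ moves $Z$ by at most $\tfrac{4RM}{n}$; McDiarmid's inequality then gives $Z\le\E Z+cRM\sqrt{x/n}$ with probability $1-e^{-x}$, which is the $\sqrt x$ term. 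To bound $\E Z$, symmetrization reduces matters to $\E\sup_{\cb}\bigl\|\tfrac1n\sum_i\varepsilon_i\int(c_j-u)\1_{W_j(\cb)}(u)X_i(du)\bigr\|$, which I would control by Dudley's entropy integral after splitting the map $\cb\mapsto\bigl(u\mapsto(c_j-u)\1_{W_j(\cb)}(u)\bigr)$ into its $1$-Lipschitz linear part $u\mapsto c_j-u$ (a covering of $\B(0,R)^k$, contributing $\sqrt{kd}/\sqrt n$ up to logs) and its combinatorial part $\1_{W_j(\cb)}$ (the VC bound on $\mathcal W$, contributing $\sqrt{V}/\sqrt n=\sqrt{kd\log k}/\sqrt n$), then $\ell_2$-aggregating over the $k$ coordinates to pick up the extra $\sqrt k$; this yields the $\tfrac{CRM}{\sqrt n}\,k\sqrt{d\log k}$ term.

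The main obstacle is the passage from point-valued to measure-valued observations while keeping the sharp variance dependence: one must justify that the weighted empirical process $\cb\mapsto\bar X_n(W_j(\cb))$ concentrates like an ordinary empirical measure of $nN_{max}$ i.i.d.\ points, with variance proxy $Mp_j(\cb)$ and not something coarser, even though the weights $w_{i,m}$ and the cardinalities of the supports vary with $i$; the $N_{max}$-dependence of the complexity term is precisely the cost of this reduction. A secondary difficulty is that the integrand in the last display is not Lipschitz in $\cb$ because of the jumps of $\1_{W_j(\cb)}$ across the Voronoi skeleton, so the chaining genuinely needs the combination of a metric (Lipschitz) and a combinatorial (VC) component rather than either ingredient on its own.
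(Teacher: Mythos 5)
Your proposal matches the paper's proof in all essentials: the two-sided bounds come from a normalized (ratio-type) VC deviation inequality obtained by ghost-sample symmetrization, a Sauer--Shelah count of the Voronoi-cell patterns on the at most $2nN_{max}$ atoms (whence the $\log(2nN_{max})$ factor and the VC dimension $c_0kd\log k$ of intersections of halfspaces), and the bound $\Var(X(W))\leq M\,p_j(\cb)$ delivering the $\sqrt{p_j(\cb)}$ terms; the gradient bound is likewise proved by bounded differences for the tail plus symmetrization and Dudley's entropy integral, with the covering number of the class factored into a Lipschitz linear part and a VC indicator part and an $\ell_2$-aggregation over the $k$ coordinates. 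The only (immaterial) differences are that you apply McDiarmid to the full vector-norm supremum rather than coordinatewise and invoke the relative VC inequality as a known result where the paper rederives it via Cantelli's inequality.
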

The proof of Lemma \ref{lem:concentration_Lloyd} is given in \Cref{sec:proof_lem_concentration_Lloyd}, and is based on empirical processes theory.
%\begin{lem}
%Assume that $\cb \in \B(\cb^*,R_0)$. Then, with probability larger than $1-8e^{ -c_1 n p_{min}/M}$, for $n$ large enough, we have, 
%\begin{align*}
%\|\hat{m}(\cb) - \cb^*\|^2 \leq \frac{3}{4} \|\cb - \cb^*\|^2 + \frac{K}{p_{min}^2} D_n^2,
%\end{align*}
%where $D_n = \sup_{\cb \in \B(0,R)^k} \| \left (   (\bar{X}_n - \mathbb{E}(X)) \bullet \left [  (c_j-x)\1_{W_j(\cb)}(x)  \right ]\right )_{j=1, \hdots, k}  \|$ and $K$ is a positive constant.
%
%Furthermore, with probabilty larger than  $1-8e^{ -c_1 n p_{min}/M} - e^{-{c_1 n p_{min}^2 \kappa_0^2/M^2}}$, for all $\cb \in \B(\cb^*,R_0)$, the above inequality holds and $\hat{m}(\cb) \in \B(\cb^*,R_0)$, for $n$ large enough.
%\end{lem}
\begin{proof}[Proof of Lemma \ref{lem:Lloyd_one_iteration}]
Let $\cb \in \B(\cb^*,R_0)$, and 
\[
D_n = \sup_{\cb \in \B(0,R)^k} \left \| \left (    \int  (c_j-u)\1_{W_j(\cb)}(u)(\bar{X}_n - \mathbb{E}(X))(du)  \right )_{j=1, \hdots, k}  \right \|.
\]
We decompose $\|\hat{m}(\cb) - \cb^*\|^2$ as follows.
\begin{align}\label{eq:dec_ecart_one_step}
\|\hat{m}(\cb) - \cb^*\|^2 = \|\cb - \cb^*\|^2 + 2 \left\langle\hat{m}(\cb) - \cb,\cb-\cb^*\right\rangle + \|\hat{m}(\cb) - \cb\|^2.
\end{align}
Next, we bound the first term of \eqref{eq:dec_ecart_one_step}.
\begin{align*}
2 \left\langle\hat{m}(\cb) - \cb,\cb-\cb^*\right\rangle & = 2 \sum_{j=1}^{k} \frac{1}{\hat{p}_j(\cb)} \left\langle \int (u-c_j)\1_{W_j(\cb)}(u) \bar{X}_n(du), c_j - c_j^*\right\rangle \\
     & \begin{multlined}[t]\leq 2 \sum_{j=1}^{k} \frac{1}{\hat{p}_j(\cb)} \left\langle \int (u-c_j)\1_{W_j(\cb)}(u) \E(X)(du), c_j - c_j^*\right\rangle \\+ 2 D_n \sqrt{\sum_{j=1}^k \frac{\|c_j-c^*_j\|^2}{\hat{p}_j(\cb)^2}}
     \end{multlined} \\
     &\begin{multlined}[t]
     \leq 2 \sum_{j=1}^{k} \frac{1}{\hat{p}_j(\cb)} \left\langle p_j(\cb^*)(c_j^*-c_j), c_j - c_j^*\right\rangle \\
     + \frac{2 p_{min}}{8\sqrt{2}}\|\cb - \cb^*\|\sqrt{\sum_{j=1}^k \frac{\|c_j-c^*_j\|^2}{\hat{p}_j(\cb)^2}} + 2D_n \sqrt{\sum_{j=1}^k \frac{\|c_j-c^*_j\|^2}{\hat{p}_j(\cb)^2}}, 
     \end{multlined}
\end{align*}
where the last line follows from Lemma \ref{lem:gradient_margin}. Now, using Lemma \ref{lem:concentration_Lloyd} with $x= c_1 n p_{min}/M$, for $c_1$ a small enough absolute constant, entails that, with probability larger than $1-8e^{ c_1 n p_{min}/M}$, for $n$ large enough and every $\cb \in \B(\cb^*,R_0)$, 
\begin{align*}
\hat{p}_j(\cb) & \geq \frac{63}{64}p_j(\cb) - \frac{p_{min}}{64} \geq \frac{31}{32} p_{min} \\
\hat{p}_j(\cb) & \leq \frac{33}{32}p_j(\cb^*),
\end{align*} 
according to Lemma \ref{lem:gradient_margin}. Therefore
\begin{multline}\label{eq:Lloyd_oneiter_firstterm}
2 \left\langle\hat{m}(\cb) - \cb,\cb-\cb^*\right\rangle \leq -2 \sum_{j=1}^k \frac{p_j(\cb^*)}{\hat{p}_j(\cb)} \| c_j -c_j^*\|^2 + \frac{32}{124 \sqrt{2}} \| \cb - \cb^*\|^2 \\
 + K_1 \| \cb - \cb^*\|^2 + K_1^{-1}\frac{32^2}{31^2p_{min}^2}D_n^2,
\end{multline}
where $K_1 >0$ is to be fixed later. Then, the second term of \eqref{eq:dec_ecart_one_step} may be bounded as follows.
\begin{align*}
\|\hat{m}(\cb) - \cb\|^2 & = \sum_{j=1}^k \frac{\left \| \int (u-c_j)\1_{W_j(\cb)}(u)\bar{X}_n(du) \right \|^2}{\hat{p}_j(\cb)^2}  \\
           & = \sum_{j=1}^k \frac{\left \| p_j(\cb^*)(c_j-c_j^*) + \Delta_j(\cb) + \Delta_{n,j}(\cb) \right \|^2}{\hat{p}_j(\cb)^2},
\end{align*}
where
\begin{align*}
\Delta_j(\cb) =  \int \left [ (u-c_j)\1_{W_j(\cb)}(u)-(u-c_j^*)\1_{W_j(\cb^*)}(u) \right ]\E(X)(du),
\end{align*}
so that $\sum_{j=1}^{k} \|\Delta_j(\cb)\| \leq \frac{p_{min}}{8\sqrt{2}} \| \cb - \cb^*\|$, according to Lemma \ref{lem:gradient_margin}, and 
\begin{align*}
\Delta_{n,j}(\cb) = \int(u-c_j)\1_{W_j(\cb)}(u)(\bar{X}_n - \E(X))(du),
\end{align*}
so that $\sum_{j=1}^k \| \Delta_{n,j}\|^2 \leq D_n^2$. Thus, 
\begin{align}\label{eq:Lloyd_oneiter_secondterm}
\| \hat{m}(\cb) - \cb\|^2 & \leq 
\left ( 1 + K_2 + K_3 \right ) \sum_{j=1}^k \frac{p_j(\cb^*)^2}{\hat{p}_j(\cb)^2} \| c_j - c_j^*\|^2 + \left ( 1 + K_2^{-1} + K_4 \right ) \sum_{j=1}^{k} \frac{\|\Delta_j(\cb)\|^2}{\hat{p}_j(\cb)^2} \notag \\ 
& \quad  + \left ( 1 + K_3^{-1} + K_4^{-1} \right ) \sum_{j=1}^{k} \frac{\|\Delta_{n,j}(\cb)\|^2}{\hat{p}_j(\cb)^2}  \notag \\
                          &\leq \left ( 1 + K_2 + K_3 \right ) \sum_{j=1}^k \frac{p_j(\cb^*)^2}{\hat{p}_j(\cb)^2} \| c_j - c_j^*\|^2  + \left ( 1 + K_2^{-1} + K_4 \right ) \frac{32^2}{31^2 \times 128} \|\cb - \cb^*\|^2  \notag \\
                    & \quad  + \left ( 1 + K_3^{-1} + K_4^{-1} \right ) \frac{32^2}{31^2 p_{min}^2}D_n^2,
\end{align}
where $K_2$, $K_3$ and  $K_4$ are positive constants to be fixed later. Combining \eqref{eq:Lloyd_oneiter_firstterm} and \eqref{eq:Lloyd_oneiter_secondterm} yields that
\begin{align*}
\| \hat{m}(\cb) - \cb^*\|^2 & \leq \| \cb - \cb^*\|^2 \left ( 1 + K_1 + \frac{32}{124\sqrt{2}} + \frac{32^2}{31^2 \times 128} \left (1 + K_2^{-1} + K_4 \right ) \right ) \\
& \quad -2 \sum_{j=1}^{k} \frac{p_j(\cb^*)}{\hat{p}_j(\cb)} \| c_j-c_j^*\|^2 + \left ( 1 + K_2 + K_3 \right ) \sum_{j=1}^{k} \frac{p_j(\cb^*)^2}{\hat{p}_j(\cb)^2}\| c_j - c_j^*\|^2 \\
& \quad + D_n^2\frac{32^2}{31^2 p_{min}^2} \left ( 1+ K_1^{-1}   + K_3^{-1} + K_4^{-1} \right ). 
\end{align*}
Taking $K_2 = \frac{1}{32}$ gives, through numerical computation,
\begin{align*}
\| \hat{m}(\cb) - \cb^*\|^2 & \leq \| \cb-\cb^*\|^2 \left ( 0.62 + K_1 + K_3 \frac{32^2}{31^2} + K_4 \frac{32^2}{31^2 \times 128} \right ) \\
& \quad + D_n^2\frac{32^2}{31^2 p_{min}^2} \left ( 1+ K_1^{-1}   + K_3^{-1} + K_4^{-1} \right ) \\
& \leq \frac{3}{4}\| \cb - \cb^*\|^2 + \frac{K}{p_{min}^2}D_n^2,
\end{align*}
for $K_1$, $K_3$ and $K_4$ small enough.
Now, according to Lemma \ref{lem:concentration_Lloyd}, it holds
\[
\frac{K}{p_{min}^2}D_n^2 \leq \frac{R_0^2}{4},
\]
with probability larger than $1-e^{-{c_1 n p_{min}^2 \kappa_0^2/M^2 }}$, for some constant $c_1$ small enough.

Recalling that, for any $t$, $\cb^{(t+1)} = \hat{m}(\cb^{(t)})$, a straightforward recursion entails that, on a global probability event that has probability larger than $1-9e^{-cn \kappa_0^2 p_{min}^2/M^2}-e^{-x}$, for $c$ small enough, provided that $\cb^{(0)} \in \B(\cb^*,R_0)$, we have for any $t \geq 0$ $\cb^{(t)} \in \B(\cb^*,R_0)$ and 
\begin{align*}
\| \cb^{(t+1)} - \cb^*\| ^2 & \leq \frac{3}{4} \|\cb^{(t)} - \cb^*\|^2 + \frac{K}{p_{min}^2} D_n^2 \\
       & \leq \frac{3}{4} \|\cb^{(t)} - \cb^*\|^2 + \frac{K}{p_{min}^2} \frac{C^2 R^2 M^2}{n}  \left (k \sqrt{d\log(k)}+ \sqrt{x} \right )^2.
\end{align*}
\end{proof}
\subsubsection{Proof of Lemma \ref{lem:onestep_McQueen}}\label{tecsec:proof_lemma_onestep_McQueen}
\begin{lem*}[\ref{lem:onestep_McQueen}]
Assume that $\cb^{(0)} \in \mathcal{B}(\cb^*,R_0)$, and $n_t \geq c_0 \frac{k^2M^2}{p_{min}^2 \kappa_0^2} \log(n)$, for some constant $c_0$, with $n \geq k$. Then we have, for any $t =0, \hdots, T-1$,
\begin{align*}
\E \left ( \| \cb^{(t+1)} - \cb^*\|^2 \right ) \leq \left ( 1 - \frac{2-K_1}{t+1} \right ) \E \left ( \| \cb^{(t)} - \cb^* \|^2 \right ) + \frac{16kMR^2}{p_{min}(t+1)^2}, \qquad \mbox{\eqref{eq:mcqueen_iter_expectation}}
\end{align*}
with $K_1 \leq 0.5$.
\end{lem*}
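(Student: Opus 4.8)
The plan is to read one iteration of Algorithm~\ref{lst:macqueen} as a projected stochastic gradient descent step with decreasing step size $1/(t+1)$ and to follow the scheme of the proof of \cite[Lemma~1]{Rakhlin11}, the only genuinely new ingredient being the random normalization by the empirical cell masses. Write the update as $\cb^{(t+1)} = \pi_{\B(0,R)^k}\bigl(\cb^{(t)} - \tfrac{1}{t+1}\,g^{(t)}\bigr)$, where the $j$-th block of $g^{(t)}$ is $g^{(t)}_j = \hat p_j^{-1}\int (c_j^{(t)}-u)\,\1_{W_j(\cb^{(t)})}(u)\,\bar X_{B^{(2)}_{t+1}}(du)$ and $\hat p_j := \bar X_{B^{(1)}_{t+1}}(W_j(\cb^{(t)}))$. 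Since $\cb^*\in\B(0,R)^k$ and the projection is $1$-Lipschitz, writing $a_t = \|\cb^{(t)}-\cb^*\|^2$ gives the basic inequality
\[
a_{t+1}\ \leq\ a_t - \frac{2}{t+1}\bigl\langle g^{(t)},\,\cb^{(t)}-\cb^*\bigr\rangle + \frac{1}{(t+1)^2}\,\|g^{(t)}\|^2 .
\]

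First I would introduce the filtration $\mathcal{F}_t = \sigma(B_1,\dots,B_t)$, for which $\cb^{(t)}$ is measurable, and the enlargement $\mathcal{G}_{t+1} = \sigma(\mathcal{F}_t, B^{(1)}_{t+1})$; this is exactly what the splitting of the mini-batch into two halves is for, since it makes $B^{(2)}_{t+1}$ independent of $\mathcal{G}_{t+1}$ while $\hat p_j$ stays $\mathcal{G}_{t+1}$-measurable. Conditioning the cross term on $\mathcal{G}_{t+1}$ replaces $\bar X_{B^{(2)}_{t+1}}$ by $\E(X)$, so that $\E[\langle g^{(t)},\cb^{(t)}-\cb^*\rangle\mid\mathcal{G}_{t+1}] = \sum_{j=1}^k \hat p_j^{-1}\bigl\langle \int (c_j^{(t)}-u)\,\1_{W_j(\cb^{(t)})}(u)\,\E(X)(du),\ c_j^{(t)}-c_j^*\bigr\rangle$. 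I would then write the inner integral as $p_j(\cb^*)(c_j^{(t)}-c_j^*)+\Delta_j(\cb^{(t)})$ exactly as in the proof of Lemma~\ref{lem:Lloyd_one_iteration}, bound $\sum_j\|\Delta_j(\cb^{(t)})\|$ via the second item of Lemma~\ref{lem:gradient_margin}, and use the first item together with $p_j(\cb^*)\geq p_{min}$ to obtain $p_j(\cb^{(t)})\geq\frac{63}{64}p_{min}$. Combined with the concentration of $\hat p_j$ around $p_j(\cb^{(t)})$ discussed below, this yields, on a good event, the two-sided bound $\frac{31}{32}p_{min}\leq \hat p_j\leq\frac{33}{32}p_j(\cb^*)$ and, after a short computation in the spirit of \eqref{eq:Lloyd_oneiter_firstterm}, the monotonicity estimate $\E[\langle g^{(t)},\cb^{(t)}-\cb^*\rangle\mid\mathcal{G}_{t+1}]\geq (1-K_1/2)\,a_t$ for some $K_1\leq 0.5$. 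For the quadratic term I would combine the crude bound $\|g^{(t)}_j\|\leq 2R\,\bar X_{B^{(2)}_{t+1}}(W_j(\cb^{(t)}))/\hat p_j$ with $\E[\bar X_{B^{(2)}_{t+1}}(W_j(\cb^{(t)}))^2\mid\mathcal{G}_{t+1}]\leq M p_j(\cb^{(t)})/|B^{(2)}_{t+1}|+p_j(\cb^{(t)})^2$ and $\sum_j p_j(\cb^{(t)})=\E(X)(\R^d)\leq M$, which on the same good event gives $\E[\|g^{(t)}\|^2\mid\mathcal{G}_{t+1}]\leq 16kMR^2/p_{min}$.

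The step I expect to be the main obstacle is the control of the random denominators $\hat p_j$ and, jointly, the proof that the iterates do not escape $\B(\cb^*,R_0)$, since Lemma~\ref{lem:gradient_margin} is only available there; this is where the batch-size hypothesis $n_t\geq c_0 k^2M^2\log(n)/(p_{min}^2\kappa_0^2)$ is used. Conditionally on $\mathcal{F}_t$ the codebook $\cb^{(t)}$ is a fixed point of $\B(\cb^*,R_0)$ and $B^{(1)}_{t+1}$ is a fresh i.i.d.\ batch, so a Bernstein inequality for $\bar X_{B^{(1)}_{t+1}}(W_j(\cb^{(t)}))$ and a union bound over the $k$ cells show that the good event $\{\,|\hat p_j-p_j(\cb^{(t)})|\leq p_{min}/64\ \text{for all } j\,\}$ has conditional probability at least $1-1/n^2$, the extra factors $k$ and $\kappa_0^{-2}$ in the required batch length being what makes the per-step estimation error of the cell barycenters a small fraction of $R_0^2$, hence keeping $\cb^{(t+1)}$ in the basin. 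A stopping-time (or induction) argument on the first exit from $\B(\cb^*,R_0)$ then shows that all iterates up to time $T$ stay in the basin with probability at least $1-1/n$; on the complementary event one uses the deterministic bound $a_{t+1}\leq 4kR^2$ coming from the projection, whose contribution is $\leq 4kR^2/n$ and is absorbed into the $1/(t+1)^2$ term because $t+1\leq T\leq n$ and $M\geq p_{min}$. Assembling the monotonicity estimate, the second-moment bound and this event decomposition, and taking iterated then total expectations, produces \eqref{eq:mcqueen_iter_expectation}.
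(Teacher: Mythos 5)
Your proposal follows essentially the same route as the paper: read the update as a projected stochastic gradient step, exploit the split of each mini-batch so that the random denominators $\hat p_j$ (built from $B^{(1)}_{t+1}$) are independent of the numerators (built from $B^{(2)}_{t+1}$), invoke Lemma~\ref{lem:gradient_margin} for the quasi-contraction of the conditional drift, control the $\hat p_j$ by Bernstein plus a union bound, keep the iterates in $\B(\cb^*,R_0)$ by induction on a good event, and absorb the complementary event via the crude bound $a_{t+1}\leq 4kR^2$. Your conditioning on $\mathcal{G}_{t+1}=\sigma(\mathcal{F}_t,B^{(1)}_{t+1})$ is just a repackaging of the paper's factorization of $E_t$ using independence of the two half-batches.

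There is, however, one step that does not go through as you state it. You take the per-step good event to have conditional probability $1-1/n^2$, hence an overall failure probability of order $1/n$, and claim the resulting remainder $4kR^2/n$ is absorbed into $\frac{CkMR^2}{p_{min}(t+1)^2}$ using $t+1\leq n$ and $M\geq p_{min}$. This requires $\frac{1}{n}\leq \frac{CM}{p_{min}(t+1)^2}$, i.e.\ $(t+1)^2\lesssim nM/p_{min}$, which fails for $t+1$ of order $n$ unless $n\lesssim M/p_{min}$. The paper avoids this by applying Lemma~\ref{lem:concentration_McQueen} at level $x=4\log(2n)$, so each $A_{t,j}^c$ has probability at most $k/(2n^4)$ and the cumulative bad event has probability $O(k/n^3)$; the remainder is then $O(k^2R^2/n^3)$, which is bounded by $4kMR^2/(p_{min}(t+1)^2)$ using $kp_{min}\leq M$ and $(t+1)^2\leq n^2$. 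The fix is immediate (your batch-size hypothesis already carries the $\log(n)$ factor needed to run Bernstein at any fixed polynomial level), but as written the absorption argument is quantitatively wrong. A related, smaller point: to reach a bound of order $kMR^2/p_{min}$ for $\E[\|g^{(t)}\|^2\mid\mathcal{G}_{t+1}]$ you need the per-cell comparison $\hat p_j\gtrsim p_j(\cb^*)$ together with $p_j(\cb^{(t)})\lesssim p_j(\cb^*)$ from Lemma~\ref{lem:gradient_margin}; using only the global lower bound $\hat p_j\gtrsim p_{min}$ with $\sum_j p_j(\cb^{(t)})\leq M$ yields the weaker $O(R^2M^2/p_{min}^2)$, which can exceed $16kMR^2/p_{min}$.
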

The proof of Lemma \ref{lem:onestep_McQueen} will make use of the following deviation bounds.
\begin{lem}\label{lem:concentration_McQueen}
Let $\cb \in \B(0,R)^k$. Then, with probability larger than $1-2ke^{-x}$, we have, for all $j=1, \hdots, k$,
\begin{align*}
|\hat{p}_j(\cb) - p_j(\cb)| \leq \sqrt{\frac{2Mp_j(\cb)x}{n}} + \frac{Mx}{n}.
\end{align*}
Moreover, with probability larger than $1-e^{-x}$, we  have, 
\begin{align*}
\left \| \left ( \int (c_j-u)\1_{W_j(\cb)}(u)(\bar{X}_n-\E(X))(du) \right )_{j=1, \hdots, k} \right \| \leq \frac{4RM\sqrt{k}}{\sqrt{n}} \left ( 1 + \sqrt{x} \right ).
\end{align*}
\end{lem}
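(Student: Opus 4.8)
The plan is to handle the two displayed inequalities separately, in both cases via classical concentration for i.i.d.\ sums, exploiting that here $\cb$ is \emph{fixed} (so, unlike in Lemma~\ref{lem:concentration_Lloyd}, no uniform control over $\cb$ is needed).

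First I would address the bound on $\hat p_j(\cb) - p_j(\cb)$. Observe that $\hat p_j(\cb) = \frac1n\sum_{i=1}^n X_i(W_j(\cb))$ is an empirical mean of i.i.d.\ variables $X_i(W_j(\cb))$, each valued in $[0,M]$ (since $X_i(\R^d)\le M$) with mean $p_j(\cb)$ and variance at most $\E[X_1(W_j(\cb))^2]\le M\,\E[X_1(W_j(\cb))] = M p_j(\cb)$, where we used $X_1(W_j(\cb))\le M$. Bernstein's inequality applied to $\pm\big(X_i(W_j(\cb)) - p_j(\cb)\big)$ then gives, for a fixed $j$, that with probability at least $1-2e^{-x}$,
\[
|\hat p_j(\cb) - p_j(\cb)| \le \sqrt{\frac{2M p_j(\cb) x}{n}} + \frac{M x}{n},
\]
after generously bounding the Bernstein factor $1/3$ by $1$ in the linear term. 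A union bound over $j=1,\dots,k$ upgrades this to all $j$ simultaneously, at the cost of probability $1-2ke^{-x}$.

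For the second bound I would rewrite the quantity as $G_n = \frac1n\sum_{i=1}^n\big(g(X_i)-\E g(X)\big)$ with $g(\mu) = \big(\int(c_j-u)\1_{W_j(\cb)}(u)\,\mu(du)\big)_{j=1,\dots,k}\in(\R^d)^k$. Since both $u\in\B(0,R)$ (on the support of $\mu$) and $c_j\in\B(0,R)$, we have $\|u-c_j\|\le 2R$, and as the cells $W_j(\cb)$ partition $\R^d$, $\|g(\mu)\|^2\le (2R)^2\sum_j\mu(W_j(\cb))^2\le (2R)^2\mu(\R^d)^2\le 4R^2M^2$, i.e.\ $\|g\|\le 2RM$ pointwise. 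Hence $\E\|G_n\|^2 = \frac1n\E\|g(X)-\E g(X)\|^2\le \frac{4R^2M^2}{n}$, so $\E\|G_n\|\le \frac{2RM}{\sqrt n}$ by Jensen. Viewing $\|G_n\|$ as a function of the independent inputs $X_1,\dots,X_n$, changing one coordinate moves $G_n$ by at most $\frac1n\|g(X_i)-g(X_i')\|\le\frac{4RM}{n}$ in norm, hence moves $\|G_n\|$ by at most $\frac{4RM}{n}$; the bounded difference inequality (see, e.g., \cite[Theorem~6.2]{Massart13}) then yields $\|G_n\|\le\E\|G_n\| + \sqrt{8R^2M^2x/n}$ with probability at least $1-e^{-x}$. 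Combining, and using $\sqrt k\ge 1$ to absorb numerical constants, gives $\|G_n\|\le\frac{4RM\sqrt k}{\sqrt n}(1+\sqrt x)$ on that event.

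All computations here are routine; the only delicate points are (i) pinning down the Bernstein variance proxy $M p_j(\cb)$, which rests on the uniform bound $X_1(W_j(\cb))\le M$, and (ii) verifying that $X_{1:n}\mapsto\|G_n\|$ has bounded differences $\frac{4RM}{n}$ — which relies on the pointwise bound $\|g\|\le 2RM$ coming from the partition property of the cells and from $u,c_j\in\B(0,R)$. I expect (ii) to be the main (still mild) obstacle.
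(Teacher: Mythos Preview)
Your proof is correct and follows essentially the same route as the paper's: Bernstein's inequality (with variance proxy $Mp_j(\cb)$) plus a union bound for the first part, and an expectation bound combined with the bounded-difference inequality for the second. In fact your pointwise estimate $\|g(\mu)\|\le 2RM$, obtained from the partition property $\sum_j\mu(W_j(\cb))^2\le\mu(\R^d)^2$, is sharper than the paper's cruder $4RM\sqrt{k}$ bound, so you end up not needing the $\sqrt{k}$ factor at all before loosening to match the statement.
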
   
A proof of Lemma \ref{lem:concentration_McQueen} is given in \Cref{sec:proof_lem_concentration_McQueen}.
\begin{proof}[Proof of Lemma \ref{lem:onestep_McQueen}]
Assume that $n \geq k$, and let $n_t =|B_t|/2 \geq \frac{C M }{p_{min}} \log(n)$, for $C$ large enough to be fixed later. For a given $t \leq T$, denote by $\hat{p}_j^{(t)}= \bar{X}_{B_t^{(1)}}(W_j(\cb^{(t)})$, and by $A_{t,1}$ and $A_{t,2}$ the events
\[
\begin{array}{@{}ccl} 
A_{t,1} &=& \left \{ \forall j = 1 \hdots, k \quad  |\hat{p}_j^{(t)} - p_j^{(t)} |  \leq  \frac{p_{min} + \sqrt{p_j^{(t)}p_{min}}}{256} \right \}, \\
A_{t,2} &=& \left \{ \forall j = 1 \hdots, k \quad  \left \|\int (c_j^{(t)}-u)\1_{W_j(\cb^{(t)})}(u)(\bar{X}_{B_t^{(2)}}-\E(X))(du)  \right \|  \leq 8 R \sqrt{\frac{Mkp_{min}}{C}} \right \}.
\end{array}
\] 
According to Lemma \ref{lem:concentration_McQueen} with $x= 4 \log(2n)$, we have $\mathbb{P}(A_{t,j}^c) \leq k/(2n^4)$, for $j=1,2$. We let also $A_{\leq t}$ denote the event $\cap_{r \leq t} A_{r_1} \cap A_{r_2}$, that has probability larger than $1- kt/n^4$.  First we prove that if $\cb^{(0)} \in \B(\cb^*,R_0)$, then, on $A_{\leq t}$, for all $r \leq t$, $\cb^{(r)} \in \B(\cb^*,R_0)$. We proceed recursively, assuming that $\cb^{(t)} \in \B(\cb^*,R_0)$. Then, on $A_{\leq t+1}$, applying Lemma \ref{lem:gradient_margin} yields that $\frac{33}{32}p_j^* \geq \hat{p}_j^t \geq \frac{31}{32}p_j^*$.  Denoting by $a_t = \| \cb^{(t)} - \cb^*\|^2$ and $g_{t+1}=\left (\frac{  \int (c^{(t)}_j-u)\1_{W_j(\cb^{(t)})}(u)\bar{X}_{B^{(2)}_{t+1}}(du)}{\hat{p}_j^{t+1}} \right )_j$, the recursion equation entails that
\begin{align}\label{eq:mcqueen_onestep}
a_{t+1} & = \|\pi_{B(0,R)^k} \left ( \cb^{(t)} - \frac{g_{t+1}}{t+1} \right )- \cb^* \|^2 \notag \\
& \leq \| \cb^{(t)} - \frac{g_{t+1}}{t+1} - \cb^*  \|^2 = a_t -  \frac{2}{t+1} \left\langle g_{t+1} , \cb-\cb^* \right\rangle + \frac{1}{(t+1)^2} \left \| g_{t+1} \right \|^2. 
\end{align}
As in the proof of Lemma \ref{lem:Lloyd_one_iteration}, denote by 
\begin{align*}
\Delta^t_j & =  \int (u-c_j^{(t)})\1_{W_j(\cb^{(t)})}(u)\E(X)(du)  - p_j^*(c_j^*-c_j^{(t)})\\
\Delta^{t+1}_{n,j} & = \int(u-c_j^{(t)})\1_{W_j(\cb^{(t)})}(u)(\bar{X}_{B^{(2)}_{t+1}}-\E(X))(du) \\
D_n^{t+1} &= \sqrt{\sum_{j=1}^k \| \Delta_{n,j}^{t+1}\|^2}.
\end{align*}
We have that
\begin{align*}
-  \frac{2}{t+1} \left\langle g_{t+1} , \cb-\cb^* \right\rangle & \begin{multlined}[t]
                                             \leq -\frac{2}{t+1} \sum_{j=1}^k \left ( \frac{p_j^*}{\hat{p}_j^{t+1}} \| c_j^{(t)} - c_j^*\|^2 - \frac{\| \Delta_{j,n}^{t+1} \| \| c_j^{(t)} - c_j^* \|}{\hat{p}_j^{t+1}} \right . \\
                                             - \left . \frac{\| c_j^{(t)}-c_j^*\|\| \Delta_j^{t}\|}{\hat{p}_j^{t+1}} \right )
\end{multlined} \\
  & \begin{multlined}[t] \leq -2 \frac{32}{33(t+1)} \| \cb^{(t)} - \cb^*\|^2 + \frac{64}{31p_{min}(t+1)} \| \cb^{(t)} - \cb^*\|D_n^{t+1} \\
   + \frac{64}{8\sqrt{2} \times 31 (t+1)} \| \cb^{(t)} -\cb^*\|^2
  \end{multlined} \\
   & \leq \|\cb^{(t)} - \cb^*\|^2 \left ( \frac{-64}{33(t+1)} + \frac{K_4}{t+1} + \frac{64}{8\sqrt{2} \times 31 (t+1)} \right ) +  K_4^{-1} \left ( \frac{32}{31 p_{min}} D_n^{t+1} \right )^2,
\end{align*}
according to Lemma \ref{lem:gradient_margin}, where $K_4$ denotes a constant. Next, the second term in \eqref{eq:mcqueen_onestep} may be bounded by
\begin{align*}
\left \| g_{t+1} \right \|^2 & \begin{multlined}[t] \leq  \sum_{j=1}^k \frac{1}{(\hat{p}_j^{t+1})^2} (p_j^*)^2 \|c_j^{(t)} - c_j^*\|^2 \left ( 1 + K_1 + K_2 \right ) \\
 + \frac{p_{min}^2}{128 \min_j{(\hat{p}_j^{t+1})^2}} \| \cb^{(t)} - \cb^*\|^2\left ( 1 + K_2^{-1} + K_3 \right ) +  \frac{1}{ \min_j{(\hat{p}_j^{t+1})^2}} \left ( 1 + K_1^{-1} + K_3^{-1} \right ) (D_n^{t+1})^2
\end{multlined} \\
 &\leq \frac{32^2}{31^2} \| \cb^{(t)} - \cb^* \|^2 \left ( 1 + K_1 + K_2 + \frac{1 + K_2^{-1} + K_3}{128} \right ) + (D_n^{t+1})^2 \frac{32^2(1 + K_1^{-1} + K_3^{-1})}{31^2 p_{min}^2}, 
\end{align*} 
where $K_1$, $K_2$ and $K_3$ are constants to be fixed later. Combining pieces and using $t+1 \geq 1$ leads to
\begin{multline*}
a_{t+1} \leq a_t + \frac{a_t}{t+1} \left( \frac{-64}{33} + \frac{64}{8\sqrt{2} \times 31} + K_4 +\frac{32^2}{31^2} \left ( 1 + K_1 + K_2 + \frac{1 + K_2^{-1} + K_3}{128} \right ) \right) \\ + \left(D_n^{t+1} \right )^2 \left (\frac{32^2}{31^2 p_{min}^2}K_4^{-1} + \frac{32^2(1 + K_1^{-1} + K_3^{-1})}{31^2 p_{min}^2} \right ).
\end{multline*}
Choosing $K_2= \frac{1}{32}$ entails that
\begin{multline*}
\left( \frac{-64}{33} + \frac{64}{8\sqrt{2} \times 31} + K_4 +\frac{32^2}{31^2} \left ( 1 + K_1 + K_2 + \frac{1 + K_2^{-1} + K_3}{128} \right ) \right) \\
 \leq -0.38 + K_4 + \frac{32^2}{31^2} \left ( K_1  + \frac{ K_3}{128} \right ),
\end{multline*}
so that, for $K_1$, $K_3$ and $K_4$ small enough, we have
\begin{align*}
a_{t+1} \leq 0.8 a_t + \frac{K}{p_{min}^2} (D_n^{t+1})^2.
\end{align*} 
Now, if $n_t \geq c_0 \frac{k^2M^2}{p_{min}^2 \kappa_0^2} \log(n)$, $n \geq k$, where $c_0$ is  an absolute constant, on $A_{\leq t+1}$ we have
\begin{align*}
a_{t+1} \leq 0.8 a_t + 0.2 R_0^2 \leq R_0^2. 
\end{align*}
Thus, provided that $\cb^{(0)} \in \mathcal{B}(\cb^*,R_0)$, on $A_{\leq t}$ we have $\cb^{(p)} \in \mathcal{B}(\cb^*,R_0)$, for $ p \leq t$.

Next, if $\mathcal{F}_{t}$ denotes the sigma-algebra corresponding to the observations of the $t$ first mini-batches $B_1, \hdots, B_t$, and $E_t$ denotes the conditional expectation with respect to $\mathcal{F}_{t}$. We will show that 
\begin{align*}
\E a_{t+1} \leq \left ( 1 - \frac{2 - K_1}{t+1} \right ) \E a_t + \frac{16kMR^2}{p_{min}(t+1)^2},
\end{align*} 
where $K_1 < 0.5$. First, we may write 
\begin{align*}
\E a_{t+1} & = \E \left ( a_{t+1} \1_{A_{\leq t}} \1_{A_{t+1,1}} \right ) + R_1, 
\end{align*}
with $R_1 \leq (4k^2R^2)/n^3 \leq 4kMR^2/(p_{min} (t+1)^2)$, since $p_{min} \leq M/k$ and $t+1 \leq T \leq n$.  Then, using \eqref{eq:mcqueen_onestep} entails
\begin{multline*}
\E \left ( a_{t+1} \1_{A_{\leq t}\cap A_{t+1,1}} \right ) \leq  \E a_t - \frac{2}{t+1}\E \left ( \left\langle g_{t+1},\cb^{(t)} - \cb^* \right\rangle  \1_{A_{\leq t}\cap A_{t+1,1}} \right ) \\ 
+ \frac{1}{(t+1)^2} \E \left ( \|g_{t+1}\|^2  \1_{A_{\leq t}\cap A_{t+1,1}} \right ).
\end{multline*}
Next, we bound the scalar product as follows.
\begin{multline*}
\E \left ( \left\langle - g_{t+1},\cb^{(t)} - \cb^* \right\rangle \1_{A_{\leq t}\cap A_{t+1,1}} \right ) \\ = \E \left ( \1_{A_{\leq t}}\sum_{j=1}^k   E_t \left ( \frac{\left\langle   \int (u - c^{(t)}_j)\1_{W_j(\cb^{(t)})}(u)\bar{X}_{B^{(2)}_{t+1}}(du), c_j^{(t)} - c_j^* \right\rangle}{\hat{p}_j^{t+1}} \1_{A_{t+1,1}} \right ) \right ),
\end{multline*}
where, for any $j \in [\![1,k]\!]$, we have
\begin{multline*}
E_t \left ( \frac{\left\langle   \int (u - c^{(t)}_j)\1_{W_j(\cb^{(t)})}(u)\bar{X}_{B^{(2)}_{t+1}}(du), c_j^{(t)} - c_j^* \right\rangle}{\hat{p}_j^{t+1}} \1_{A_{t+1,1}} \right ) \\
= E_t \left (\left\langle   \int (u - c^{(t)}_j)\1_{W_j(\cb^{(t)})}(u)\bar{X}_{B^{(2)}_{t+1}}(du), c_j^{(t)} - c_j^* \right\rangle \right ) E_t \left (\frac{1}{\hat{p}_j^{t+1}} \1_{A_{t+1,1}} \right ).
\end{multline*}
The first term may be bounded by
\begin{multline*}
E_t \left (\left\langle   \int (u - c^{(t)}_j)\1_{W_j(\cb^{(t)})}(u)\bar{X}_{B^{(2)}_{t+1}}(du), c_j^{(t)} - c_j^* \right\rangle \right ) \1_{A_{\leq t}} \leq - p_j^* \| c_j^{(t)} - c_j^*\|^2 \1_{A_{\leq t}} \\ + \|\Delta_{j,t}\| \|c_j^{(t)} - c_j^*\|,
\end{multline*}
and the second term by
\begin{align*}
\frac{32}{33p_j^*}\1_{A_{\leq t}} \leq E_t \left (\frac{1}{\hat{p}_j^{t+1}} \1_{A_{t+1,1}} \right ) \1_{A_{\leq t}} \leq \frac{32}{31p_j^*}.
\end{align*}
This gives
\begin{align*}
\E \left ( \left\langle - g_{t+1},\cb^{(t)} - \cb^* \right\rangle \1_{A_{\leq t}\cap A_{t+1,1}} \right ) & \leq - \frac{32}{33} \E \left (a_t \1_{A_{\leq t}} \right ) + \frac{32}{31 p_{min}} \E \left (  \sum_{j=1}^k \| \Delta_{j,t} \| \|c_j^{(t)} - c_j^*\| \1_{A_{\leq t}} \right ) \\
& \leq - \frac{32}{33} \E(a_t) + \frac{32}{31 \times 8 \sqrt{2}} \E(a_t) + \frac{32}{33} \frac{4k^2R^2}{n^3},
\end{align*}
according to Lemma \ref{lem:gradient_margin}. At last, the bound on $\|g_{t+1}\|^2$ writes as follows.
\begin{align*}
\E \left ( \|g_{t+1}\|^2 \1_{A_{\leq t} \cap A_{t+1,1}} \right ) & = \E \left ( \1_{A_{\leq t}} E_t \left ( \|g_{t+1}\|^2 \1_{A_{t+1,1}}\right ) \right ) \\
&\leq \frac{32}{31} \frac{4kMR^2}{p_{min}}.
\end{align*}
Gathering all pieces leads to
\begin{align*}
\E a_{t+1} \leq \left ( 1 - \frac{2-K_1}{t+1} \right ) \E a_t + \frac{16 kMR^2}{p_{min}(t+1)^2},
\end{align*}
with $K_1 \leq 0.5$.

At last, in the point sample case where we observe $n$ points in $\mathbb{R}^d$ $X_1, \hdots, X_n$ i.i.d with distribution $X$, recall that we take
\[
\cb^{(t+1)} = \cb^{(t)} - \frac{g_{t+1}}{t+1}, \quad 
\mbox{}with \quad  
g_{t+1} = \frac{ \int (c_j^{(t)}-u)\mathds{1}_{W_j(\cb^{(t)}}(u)\bar{X}_{B_{t+1}}(du)}{\hat{p}_j^{t+1}}.
\] 
With a slight abuse of notation we denote by $A_{\leq t}$ the event onto which the concentration inequalities of Lemma \ref{lem:concentration_McQueen} are satisfied (with $B_{t}^{(1)} = B_{t}^{(2)} = B_{t}$). The first step of the proof is the same: if $n_{t} \geq c_0 \frac{k}{p_{min}^2 \kappa_0^2}$ and $ n \geq k$, then, on $A_{\leq t}$, for all $j \leq t$, $a_j \leq R_0$. It remains to prove the recursion inequality
\[
\mathbb{E}a_{t+1} \leq \left ( 1 - \frac{2-K_1}{t+1} \right ) \mathbb{E}a_t + \frac{16kR^2}{p_{min}(t+1)^2}.
\]
We proceed as before, writing $\E(a_{t+1}) = \E(a_{t+1}\mathds{1}_{A_{\leq t}}) + R_1$, with $R_1 \leq 4kR^2/(p_{min}(t+1)^2)$, and
\begin{align*}
\E(a_{t+1}\mathds{1}_{A_{\leq t}}) \leq \E a_t + \frac{1}{(t+1)^2} \E (\|g_{t+1}\|^2)- \frac{2}{t+1} \E \left ( \left\langle g_{t+1}, \cb^{(t)} - \cb^* \right\rangle \1_{A_{\leq t}} \right ). 
\end{align*} 
Note that $\|g_{t+1}\|^2 \leq 4 k R^2$, so that the second term might be bounded by $4kR^2/(t+1)^2$. To bound the scalar product, we proceed as follows. Let $j \in [\![1,k]\!]$, and, for $i \in B_{t+1}$, denote by $U_{i,j}$ the random variable $\1_{W_j(\cb^{(t)})}(X_i)$. We have
\begin{align*}
\E & \left ( \frac{\left\langle  \int (u - c_j^{(t)}) \1_{W_j(\cb^{(t)}}(u)\bar{X}_{B_{t+1}}(du), c_j^{(t)}- c_j^* \right\rangle} {\hat{p}_j^{t+1}} \1_{A_{\leq t}} \right ) \\
&  = \E \left ( \frac{\1_{A _{\leq t}}}{\hat{p}_{j}^{t+1}} \left\langle \mathbb{E} \left [  \int (u - c_j^{(t)}) \1_{W_j(\cb^{(t)})}(u)\bar{X}_{B_{t+1}}(du) \mid \mathcal{F}_t,(U_{i,j})_{i \in B_{t+1}} \right ] , c_j^{(t)}- c_j^* \right\rangle \right ) \\
 & = \E \left ( \frac{\1_{A _{\leq t}}}{n_{t+1} \hat{p}_{j}^{t+1}} \sum_{i \in B_{t+1}} U_{i,j} \left[ - \frac{p_j^*}{p_j^t} \| c_j^{(t)} - c_j^*\|^2 + \frac{\left\langle \Delta_{j,t}, c_j^{(t)} - c_j^* \right\rangle}{p_j^t} \right ]  \right ) \\
& \leq -\frac{32}{33}\E ( \|c_j^* - c_j\|^2 \1_{A_{\leq t}} ) + \frac{32}{31 p_{min}} \E ( \| \Delta_{j,t}\| \|c_j^t - c_j^* \| \1_{A_{\leq t}} ). 
\end{align*}
The remaining of the proof is the same as for the sample measure case.
\end{proof}
\subsubsection{Proof of Lemma \ref{lem:concentration_Lloyd}}\label{sec:proof_lem_concentration_Lloyd}
Let $Z_1$ denote the process
\[
Z_1 = \sup_{\cb \in \B(0,R)^k, j=1, \hdots, k} \left |  \int \1_{W_j(\cb)}(u)\left ( \frac{\bar{X}_n}{M} - \frac{\mathbb{E}(X)}{M} \right )(du) \right |.
\]
%The bounded difference inequality (\cite[Theorem 6.2]{Massart16}) entails that 
%\begin{align*}
%\mathbb{P}\left ( Z_1 \geq \E(Z_1) + \sqrt{\frac{x}{2n}} \right ) \leq e^{-x}.
%\end{align*}  
Note that the VC dimension of Voronoi cells in a $k$-points Voronoi diagram is at most $c_0 kd \log(k)$ (\cite[Theorem 1.1]{vanderVaart09}). We first use a symmetrization bound.
\begin{lem}\label{lem:symmetrizationnromalized}
Let $\mathcal{F}$ denote a class of functions taking values in $[0,1]$, and $X_1, \hdots, X_n$, $X'_1, \hdots, X'_n$ i.i.d random variables drawn from $P$. Denote by $P_n$ and $P'_n$ the empirical distributions associated to the $X_i$'s and $X'_i$'s. If $nt^2 \geq 1$, then
\begin{align*}
\mathbb{P} \left ( \sup_{f \in \mathcal{F}} \frac{(P-P_n)f}{\sqrt{Pf}} \geq 2t \right ) & \leq 2 \mathbb{P} \left ( \sup_{f \in \mathcal{F} } \frac{(P'_n-P_n)f}{\sqrt{(P'_n f + P_nf)/2}} \geq t \right ) \\
\mathbb{P} \left ( \sup_{f \in \mathcal{F}} \frac{(P_n-P)f}{\sqrt{P_nf}} \geq 2t \right ) & \leq 2 \mathbb{P} \left ( \sup_{f \in \mathcal{F} } \frac{(P_n-P'_n)f}{\sqrt{(P'_n f + P_nf)/2}} \geq t \right ).
\end{align*}
\end{lem}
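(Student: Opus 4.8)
The plan is to run the classical symmetrization‑by‑a‑ghost‑sample argument, but keeping track of the self‑normalising denominators; I will prove the first inequality, the second being entirely analogous after swapping the roles of $P$ and $P_n$. On the event $A_1:=\{\sup_{f\in\mathcal F}(P-P_n)f/\sqrt{Pf}\ge 2t\}$, fix (by the usual measurable‑selection convention, or up to an arbitrarily small slack which does not affect the conclusion) a function $f^\star=f^\star(X_1,\ldots,X_n)$ with $(P-P_n)f^\star\ge 2t\sqrt{Pf^\star}$. Since $P_nf^\star\ge 0$, this forces $P_nf^\star\le Pf^\star$ and $\sqrt{Pf^\star}\ge 2t$, i.e.\ $Pf^\star\ge 4t^2$.

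Next, condition on $X_1,\ldots,X_n$, so that $f^\star$ is frozen while the ghost sample $X'_1,\ldots,X'_n$ remains i.i.d.\ with law $P$ and independent of it. Because $0\le f^\star\le 1$ we have $\Var(P'_nf^\star)=\Var(f^\star(X'_1))/n\le P\big((f^\star)^2\big)/n\le Pf^\star/n$, so Cantelli's one‑sided inequality gives, on $A_1$,
\[
\mathbb{P}\Big((P'_n-P)f^\star\le -t\sqrt{Pf^\star}\ \Big|\ X_1,\ldots,X_n\Big)\ \le\ \frac{Pf^\star/n}{Pf^\star/n+t^2Pf^\star}\ =\ \frac{1}{1+nt^2}\ \le\ \frac12,
\]
the last step using $nt^2\ge1$. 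Writing $B$ for the complementary event $\{(P'_n-P)f^\star\ge -t\sqrt{Pf^\star}\}$, we thus have $\mathbb{P}(B\mid X_1,\ldots,X_n)\ge 1/2$ on $A_1$.

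On $A_1\cap B$, adding the two defining inequalities gives $(P'_n-P_n)f^\star=(P'_n-P)f^\star+(P-P_n)f^\star\ge t\sqrt{Pf^\star}$, and it remains to pass from $\sqrt{Pf^\star}$ to $\sqrt{(P'_nf^\star+P_nf^\star)/2}$. Writing $a=Pf^\star$, $b=P_nf^\star$, $c=P'_nf^\star$, one must deduce $c-b\ge t\sqrt{(b+c)/2}$ from $b\le a-2t\sqrt a$, $c\ge a-t\sqrt a$, $a\ge 4t^2$: if $c\le a$ this is immediate since $(b+c)/2\le a$ and $c-b\ge t\sqrt a$; if $c>a$, set $d=c-a>0$, note $c-b\ge d+2t\sqrt a$ and $(b+c)/2\le a+d/2\le(\sqrt a+\sqrt{d/2})^2$, and conclude from $d+t\sqrt a\ge t\sqrt{d/2}$, which follows from $\sqrt{dt^2/2}\le d/2+t^2/4$ and $t\sqrt a\ge 2t^2$. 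Hence $A_1\cap B\subset A_2:=\{\sup_{f\in\mathcal F}(P'_n-P_n)f/\sqrt{(P'_nf+P_nf)/2}\ge t\}$, and integrating over $X_1,\ldots,X_n$,
\[
\tfrac12\,\mathbb{P}(A_1)\ \le\ \E\big[\mathbf{1}_{A_1}\,\mathbb{P}(B\mid X_1,\ldots,X_n)\big]\ =\ \mathbb{P}(A_1\cap B)\ \le\ \mathbb{P}(A_2),
\]
which is the first asserted bound. For the second, start from $A'_1:=\{\sup_f(P_n-P)f/\sqrt{P_nf}\ge 2t\}$; the selected $f^\star$ then obeys $P_nf^\star\ge Pf^\star$ and $P_nf^\star\ge 4t^2$, which is exactly what makes Cantelli's inequality on $\{(P'_n-P)f^\star\ge t\sqrt{P_nf^\star}\}$ yield the factor $1/(1+nt^2)$ again (using $\Var(P'_nf^\star)\le Pf^\star/n\le P_nf^\star/n$ in the denominator), and the self‑normalising algebra is even shorter here because the resulting bound $(P_n-P'_n)f^\star\ge t\sqrt{P_nf^\star}$ already forces $P'_nf^\star\le P_nf^\star$.

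I expect the only genuinely delicate point to be the deterministic inequality in the third paragraph — extracting $c-b\ge t\sqrt{(b+c)/2}$ from the one‑sided bounds when the ghost empirical mass $c=P'_nf^\star$ overshoots $Pf^\star$ — together with the routine but unavoidable measurability bookkeeping needed to legitimise the data‑dependent choice of $f^\star$ (which is also where the covering/VC hypotheses on $\mathcal F$ enter when the lemma is later applied to Voronoi cells).
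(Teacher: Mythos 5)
Your proof is correct and follows essentially the same route as the paper's (which itself follows the Bartlett--Lugosi argument): ghost-sample symmetrization, Cantelli's inequality yielding the factor $1/(1+nt^2)\le 1/2$, and a deterministic comparison to convert $\sqrt{Pf^\star}$ into the symmetrized normalization. The only difference is cosmetic: where you run a two-case analysis on $c=P'_nf^\star$ versus $a=Pf^\star$, the paper obtains the same denominator conversion in one line from the monotonicity of $x\mapsto (x-a)/\sqrt{x+a}$.
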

For the sake of completeness a proof of Lemma \ref{lem:symmetrizationnromalized} is given in \Cref{sec:proof_lem_symmetrization_normalized}. Next, introducing $\sigma_1, \hdots,  \sigma_n$ independent Rademacher variables, we get
\begin{align*}
\mathbb{P} \left ( \sup_{f \in \mathcal{F} } \frac{(P'_n-P_n)f}{\sqrt{(P'_n f + P_nf)/2}} \geq t \right ) & \leq \mathbb{P} \left ( \sup_{f \in \mathcal{F} } \frac{\frac{1}{n} \sum_{i=1}^n{\sigma_i (f(X_i) - f(X'_i))}}{\sqrt{(P'_n f + P_nf)/2}} \geq t \right ) \\
& \leq \mathbb{E}_{X_1, \hdots, X_n, X'_1, \hdots, X'_n} \left ( \mathbb{P}_{\sigma} \left ( \sup_{f \in \mathcal{F} } \frac{\frac{1}{n} \sum_{i=1}^n{\sigma_i (f(X_i) - f(X'_i))}}{\sqrt{(P'_n f + P_nf)/2}} \geq t \right ) \right ). 
\end{align*}
For a set of functions $\mathcal{F}$ and elements $x_1, \hdots, x_q \in \mathcal{M}(R)$ we denote by $S_{\mathcal{F}}(x_1, \hdots, x_q)$ the cardinality of the set $\{ (f(x_1), \hdots, f(x_q))\mid f \in \mathcal{F} \}$.  Let $\mathcal{F}_1$ denote the set of functions $\{X \in \mathcal{M}(R,M) \mapsto X(W)/M \mid W= \bigcap_{j=1}^k H_j, \mbox{ $H_j$ half-space} \}$. Since, for every $i \in [\![1,n]\!]$, $X_i = \sum_{j=1}^{n_i} \mu_{i,j} \delta_{x^{(i)}_j}$, we have 
\begin{align*}
S_{\mathcal{F}_1}(X_1, \hdots, X_n, X'_1, \hdots, X'_n) & \leq |\{(\mathds{1}_W(x^{(i)}_j))_{i=1, \hdots, 2n, j=1, \hdots, n_i} \mid W= \bigcap_{j=1}^k H_j, \mbox{ $H_j$ half-space} \}| \\ 
& \leq \left ( 2 \left ( \sum_{i=1}^n n_i + n'_i \right ) \right )^{c_0 kd \log(k)},
\end{align*}
using \cite[Theorem 1]{Mendelson03}, and \cite[Theorem 1]{vanderVaart09} to bound the VC-dimension of the sets $W$'s. On the other hand, for any $f \in \mathcal{F}_1$, it holds
\begin{align*}
\frac{\sum_{i=1}^n (f(X'_i) - f(X_i))^2}{\sum_{i=1}^n (f(X_i) + f(X'_i))} \leq 1. 
\end{align*}
Thus, combining Hoeffding's inequality and a plain union bound yields
\[
\left ( \mathbb{P}_{\sigma} \left ( \sup_{f \in \mathcal{F} } \frac{\frac{1}{n} \sum_{i=1}^n{\sigma_i (f(X_i) - f(X'_i))}}{\sqrt{(P'_n f + P_nf)/2}} \geq t \right ) \right ) \leq \left ( 2 \sum_{i=1}^n(n_i + n'_i) \right )^{c_0 kd \log(k)} e^{-nt^2},
\]
hence, since for $i=1, \hdots, n$, $X_i \in \mathcal{M}_{N_{max}}(R,M)$,
\[
\mathbb{P} \left ( \sup_{f \in \mathcal{F} } \frac{(P'_n-P_n)f}{\sqrt{(P'_n f + P_nf)/2}} \geq t \right ) \leq (4 n N_{max})^{c_0 kd \log(k)} e^{-nt^2},
\]
that proves the second inequality of Lemma \ref{lem:concentration_Lloyd}. The first inequality of Lemma \ref{lem:concentration_Lloyd} derives the same way from the second inequality of Lemma \ref{lem:symmetrizationnromalized}.

We turn to the third inequality of Lemma \ref{lem:concentration_Lloyd}. Let $Z$ denote the process
\[
Z = \sup_{\cb \in \B(0,R)^k, \|\mathbf{t}\| \leq 1} \left\langle \left ( \int   (c_j-u)\1_{W_j(\cb)}(u) \left (\frac{\bar{X}_n}{M} - \frac{\mathbb{E}(X)}{M} \right )(du) \right )_{j=1, \hdots, k},\mathbf{t} \right\rangle, 
\]
and, for $j=1, \hdots, k$, 
\[
Z_j = \sup_{\cb \in \B(0,R)^k, \|t_j\| \leq 1} \left\langle \frac{1}{M} \int (c_j-u)\1_{W_j(\cb)}(u)(\bar{X}_n - \mathbb{E}(X))(du) ,t_j \right\rangle,
\]
so that 
$Z \leq \sqrt{\sum_{j=1}^{k} Z_j^2}$. 
According to the bounded differences inequality (\cite[Theorem 6.2]{Massart13}), we have
\begin{align*}
\mathbb{P} \left ( Z_j \geq \mathbb{E}(Z_j) + \sqrt{\frac{8 R^2}{n}x} \right ) \leq e^{-x}.
\end{align*}
Using symmetrization we get 
\begin{align*}
\mathbb{E}Z_j \leq \frac{2}{n} \mathbb{E}_{X_1, \hdots, X_n} \mathbb{E}_\sigma \sup_{\cb \in \B(0,R)^k, \|t\| \leq 1} \sum_{i=1}^n { \sigma_i \left\langle \frac{1}{M} \int (c_j-u)\1_{W_j(\cb)}(u)X_i(du) ,t_j \right\rangle}, 
\end{align*}
where $\sigma_1, \hdots, \sigma_n$ are i.i.d Rademacher variables. Now assume that $X_1, \hdots, X_n$ is fixed and $j=1$. For a set $\mathcal{F}$ of real-valued functions we denote by $\mathcal{N}(\mathcal{F}, \varepsilon, \|.\|)$ its $\varepsilon$-covering number with respect to the norm $\|.\|$. Denoting by $\Gamma_0$, $\Gamma_1$ and $\Gamma_2$ the following sets
\begin{align*}
\begin{array}{@{}ccc}
\Gamma_0 &=&  \left \{ \gamma^{(0)}_{(\cb,t_1)}:X \mapsto \frac{1}{M} \int \frac{\left\langle  c_1-u,t_1  \right\rangle}{2R} \1_{W_1(\cb)}(u) X(du)  \mid \cb \in \B(0,R)^k, t_1 \in \B(0,1) \right \}, \\
\Gamma_1 & =& \left \{ \gamma^{(1)}_{(c_1,t_1)}: x \mapsto \frac{\left\langle c_1-x,t_1 \right\rangle}{2R} \mid c_1 \in \B(0,R), t_1 \in \B(0,1) \right \}, \\
\Gamma_2 & =& \left \{ \gamma^{(2)}_{\cb'}:x \mapsto \1_{W_1(\cb)}(x) \mid \cb \in \B(0,R)^k \right \},
\end{array}
\end{align*}
so that, for every $(\cb,t_1), (\cb',t'_1) \in (\mathcal{B}(0,R)^k \times \mathcal{B}(0,1))^2$ , 
\[
\gamma^{(0)}_{(\cb,t_1)}(X_i) - \gamma^{(0)}_{(\cb',t'_1)}(X_i) = \frac{1}{M} \int \left [ \gamma^{(1)}_{(c_1,t_1)}(u)\gamma^{(2)}_{\cb}(u) - \gamma^{(1)}_{(c'_1,t'_1)}(u)\gamma^{(2)}_{\cb'}(u) \right ]X_i(du).
\]
Let $\varepsilon>0$. If $ \|\gamma^{(1)}_{(c_1,t_1)} - \gamma^{(1)}_{(c'_1,t'_1)}\|_\infty \leq \varepsilon$, we may write
\begin{align*}
(\gamma^{(0)}_{(\cb,t_1)}(X_i) - \gamma^{(0)}_{(\cb',t'_1)}(X_i))^2 \leq \left ( \varepsilon + \frac{1}{M} \int  |\gamma^{(2)}_{\cb}(u)-\gamma^{(2)}_{\cb'}(u)|X_i(du)  \right )^2. 
\end{align*}
Thus, 
\begin{align*}
\| \gamma^{(0)}_{(\cb,t_1)} - \gamma^{(0)}_{(\cb',t'_1)} \|^2_{L_2(P_n)} & \leq 2 \varepsilon^2 + \frac{2}{n} \sum_{j=1}^n  \|\gamma^{(2)}_{\cb}-\gamma^{(2)}_{\cb'}\|^2_{L_2(X_i/M)} \\
& \leq 2 \varepsilon^2 + 2 \|\gamma^{(2)}_{\cb}-\gamma^{(2)}_{\cb'}\|^2_{L_2(\bar{X}_n/M)} \\
& \leq 2 \varepsilon^2 + 2 \|\gamma^{(2)}_{\cb}-\gamma^{(2)}_{\cb'}\|^2_{L_2(\bar{X}_n/M(\bar{X}_n))} . 
\end{align*}
We deduce
\[
\mathcal{N}(\Gamma_0,\varepsilon, L_2(P_n)) \leq \mathcal{N}(\Gamma_1, \varepsilon /2, \|.\|_\infty) \times \mathcal{N}(\Gamma_2, \varepsilon /2, L_2(\bar{X}_n/M(\bar{X}_n))),
\]
for every $\varepsilon >0$.  
According to \cite[Theorem 1]{Mendelson03}, we may write
\begin{align*}
\mathcal{N} \left ( \Gamma_1 , \frac{\varepsilon}{2}, \|.\|_\infty \right ) & \leq \left ( \frac{4}{\varepsilon} \right )^{K(d+1)}, \\
\mathcal{N} \left ( \Gamma_2 , \frac{\varepsilon}{2}, L_2(\bar{X}_n/M(\bar{X}_n)) \right ) & \leq \left ( \frac{4}{\varepsilon} \right )^{c_0 K kd \log(k)},
\end{align*}
where $K$ is a constant and $\varepsilon < 2$. Thus, for every $\varepsilon < 2$, 
\begin{align*}
\mathcal{N}\left ( \Gamma_0, \varepsilon, L_2(P_n) \right ) \leq \left ( \frac{4}{\varepsilon} \right )^{C kd \log(k)}.
\end{align*}
Using Dudley's entropy integral (see, e.g., \cite[Corollary 13.2]{Massart13}) yields, for $k\geq 2$,
\begin{align*}
\mathbb{E}_\sigma Z_j \leq C R \sqrt{\frac{kd \log(k)}{{n}}},
\end{align*}
hence the result.

\subsubsection{Proof of Lemma \ref{lem:concentration_McQueen}}\label{sec:proof_lem_concentration_McQueen}
The first bound of Lemma \ref{lem:concentration_McQueen} follows from Bernstein's inequality. 
To prove the second inequality, we first bound the expectation as follows.
\begin{multline*}
\mathbb{E} \left ( \left \|  \left ( \int (c_j-u)\1_{W_j(\cb)}(u)(\bar{X}_n-\E(X))(du) \right )_{j=1, \hdots, k} \right \| \right ) \\
\begin{aligned}[t]
&\leq \sqrt{\E \left \|  \left ( \int (c_j-u)\1_{W_j(\cb)}(u) (\bar{X}_n-\E(X))(du) \right )_{j=1, \hdots, k} \right \|^2 } \\
 & \leq \sqrt{\frac{1}{n^2} \sum_{i=1}^n \E \left ( \left \|  \left ( \int (c_j-u)\1_{W_j(\cb)}(u) (X_i-\E(X))(du) \right )_{j=1, \hdots, k} \right \|^2 \right )} \\
& \leq \sqrt{\frac{(4RM)^2k}{n}} = \frac{4RM\sqrt{k}}{\sqrt{n}}
\end{aligned}
\end{multline*}
A bounded difference inequality (see, e.g., \cite[Theorem 6.2]{Massart13} entails that, with probability larger than $1-e^{-x}$, 
\begin{align*}
\left \|  \left ( \int (c_j-u)\1_{W_j(\cb)}(u)(\bar{X}_n-\E(X))(du) \right )_{j=1, \hdots, k} \right \| \leq  \frac{4RM\sqrt{k}}{\sqrt{n}} + \sqrt{\frac{8kR^2M^2x}{n}},
\end{align*}
hence the result.
\subsubsection{Proof of Lemma \ref{lem:symmetrizationnromalized}}\label{sec:proof_lem_symmetrization_normalized}
We follow the proof of \cite[Theorem 1]{Bartlett99}. Let $t >0$, and assume that $Pf - P_nf > 2t \sqrt{Pf}$, as well as $P'_nf \geq Pf - t \sqrt{Pf} \geq 0$. Let $g_a: \mathbb{R}^+ \rightarrow \mathbb{R}$ be defined as $g_a(x) = \frac{x-a}{\sqrt{x+a}}$, for $a \geq 0$. Then $g_a$ is nondecreasing on $\mathbb{R}^+$. With $a = P_nf$, $0 \leq x_2 = Pf - t \sqrt{Pf} \leq P'_nf = x_1$, we have $g_a(x_2) \leq g_a(x_1)$, so that
\begin{align*}
\frac{P'_nf - P_nf}{\sqrt{\frac{1}{2}(P'_nf + P_nf)}} \geq \frac{Pf - t \sqrt{Pf} - P_nf}{\sqrt{\frac{1}{2}(P_nf + Pf - t \sqrt{Pf}}}.
\end{align*} 
Since $P_nf + Pf - t \sqrt{Pf} \leq 2 Pf$, we deduce that
\begin{align*}
\frac{P'_nf - P_nf}{\sqrt{\frac{1}{2} \left ( P'_nf + P_nf \right )}} \geq \frac{P_f - P_nf - t \sqrt{Pf}}{\sqrt{Pf}} \geq t.
\end{align*}
Thus, 
\begin{multline*}
\mathbb{P} \left ( \sup_{f \in \mathcal{F}} \frac{P'_nf - P_nf}{\sqrt{\frac{1}{2} \left ( P'_nf + P_nf \right )}} \right ) \\
\geq \mathbb{P} \left ( \exists f_0 \in \mathcal{F} \mid Pf_0 - P_n f_0 > 2t \sqrt{Pf_0} \mbox{ and } P'_nf_0 \geq Pf_0 - t \sqrt{Pf_0} \right ).
\end{multline*}
Let $A = \left \{ \exists f_0 \in \mathcal{F} \mid Pf_0 - P_n f_0 > 2t \sqrt{Pf_0} \right \}$. The previous inequality may be written as
\begin{multline*}
\mathbb{P} \left ( \exists f_0 \in \mathcal{F} \mid Pf_0 - P_n f_0 > 2t \sqrt{Pf_0} \mbox{ and } P'_nf_0 \geq Pf_0 - t \sqrt{Pf_0} \right ) \\
\geq \mathbb{E} \left ( \mathds{1}_A \mathbb{P} \left ( \bigcup_{\{f \mid Pf - P_n f > 2t \sqrt{Pf} \}} \{P'_nf \geq Pf - t \sqrt{Pf}\} \,\middle|\, X_1, \hdots, X_n \right ) \right ),
\end{multline*}
with, for a fixed $f \in \mathcal{F}$, using Cantelli's inequality,
\begin{align*}
\mathbb{P} \left ( P'_nf - Pf < -2t \sqrt{Pf} \right ) \leq \frac{\frac{1}{n} \Var(f)}{\frac{1}{n}\Var(f) + t^2 Pf}.
\end{align*}
Since, for any $f \in \mathcal{F}$, $\|f\|_\infty \leq 1$, we have $\Var(f) \leq Pf^2 \leq Pf$, thus
\begin{align*}
\mathbb{P} \left ( P'_nf - Pf < -t \sqrt{Pf} \right ) \leq \frac{1}{2},
\end{align*}
provided that $nt^2 \geq 1$. Thus
\begin{align*}
\mathbb{P} \left ( \sup_{f \in \mathcal{F}} \frac{P'_nf - P_nf }{\sqrt{\frac{1}{2}( P'_nf + P_nf)}} \geq t \right ) \geq \frac{1}{2} \mathbb{P} \left ( \frac{Pf - P_nf}{\sqrt{Pf}} \geq 2t \right ).
\end{align*}
The other inequality proceeds from the same reasoning, considering $f$ such that $P_nf - Pf > 2t \sqrt{P_nf}$ and $P'_nf \leq Pf + t \sqrt{P_nf}$, and $g_a: \mathbb{R}^+ \rightarrow \mathbb{R}$ defined by $g_a(x) = \frac{a-x}{\sqrt{a+x}}$, that is nonincreasing for $a \geq 0$. Choosing $a = P_nf$, $0 \leq x_1 = P'_nf \leq Pf + t \sqrt{
P_nf} = x_2$ leads to
\begin{align*}
\frac{P_nf - P'_nf}{\sqrt{\frac{1}{2} \left ( P'_nf + P_nf \right )}} \geq \frac{P_nf - Pf - t \sqrt{P_nf}}{\sqrt{\frac{1}{2} \left ( Pf + t \sqrt{P_nf} + P_nf\right ) } } \geq \frac{P_nf - Pf - t \sqrt{P_nf}}{\sqrt{P_nf}} \geq t,
\end{align*}
since $Pf + t \sqrt{P_nf} \leq P_nf$. Using Cantelli's inequality again leads to the result.

\subsection{Proof of Lemma \ref{lemma-truncated-dgm}}\label{sec:proof_lemma_truncated_dgm}
\begin{lem*}[\ref{lemma-truncated-dgm}]
%Let $M$ be a compact $d$-dimensional submanifold of $\R^D$ with positive reach $\tau$. 
Let $S$ be a compact subset of $\R^d$, and $D$ denote the persistence diagram of the distance function $d_S$. For any $s >0$, the truncated diagram consisting of the points $m=(m^1, m^2) \in D$ such that $m^2 - m^1 \geq s$ is finite.    
\end{lem*}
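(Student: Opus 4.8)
The plan is to reduce the statement to the finite case via bottleneck stability. Concretely, I would approximate $d_S$ in sup-norm by the distance function to a finite $\varepsilon$-net of $S$, invoke Lemma~\ref{lem:pers_diag_stability} to transfer closeness to the diagrams, and then combine two elementary facts: a finite point set has a finite persistence diagram, and under a bottleneck matching a point of high persistence can only be matched to a genuine (off-diagonal) point, not to the diagonal.

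Fix $s>0$ and set $\varepsilon = s/4$. Since $S$ is compact, it has a finite $\varepsilon$-net, i.e.\ a finite subset $Y\subset S$ such that every point of $S$ lies within distance $\varepsilon$ of $Y$. First I would check that $\|d_S - d_Y\|_\infty \le \varepsilon$: the inclusion $Y\subset S$ gives $d_Y \ge d_S$ pointwise, while for each $x$, choosing a nearest point $p\in S$ (it exists by compactness) and a net point $q\in Y$ with $\|p-q\|\le\varepsilon$ yields $d_Y(x)\le\|x-q\|\le d_S(x)+\varepsilon$. By Lemma~\ref{lem:pers_diag_stability}, $d_B\bigl(D(d_S),D(d_Y)\bigr)\le\varepsilon$.

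Next I would record that $D(d_Y)$ is finite. Indeed, by the Nerve Lemma the sublevel sets $d_Y^{-1}([0,t])$ are homotopy equivalent to the \v{C}ech complex of $Y$ at scale $t$, a finite filtered simplicial complex (at most $2^{|Y|}$ simplices), whose persistence module decomposes into finitely many interval summands; so $D(d_Y)$ has finitely many off-diagonal points and finitely many points at infinity. Since $d_B\bigl(D(d_S),D(d_Y)\bigr)<\infty$ forces the essential classes (points at infinity) of the two diagrams to be equinumerous in each homology degree — any infinite point matched to a finite point or to the diagonal would incur infinite cost — the diagram $D(d_S)$ also has only finitely many points at infinity, all of which trivially have persistence $\ge s$; it remains to bound the number of \emph{finite} points of $D(d_S)$ with persistence $\ge s$. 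Now take $\delta = s/3$, so $\varepsilon<\delta<s/2$. Because $d_B\bigl(D(d_S),D(d_Y)\bigr)\le\varepsilon<\delta$, there is a partial matching $\gamma$ pairing a subset of $D(d_S)$ with a subset of $D(d_Y)$, with every matched pair at $\ell^\infty$-distance $<\delta$ and every unmatched point within $\delta$ of the diagonal. If $m=(m^1,m^2)\in D(d_S)$ is finite with $m^2-m^1\ge s$, then its $\ell^\infty$-distance to the diagonal is $(m^2-m^1)/2\ge s/2>\delta$, so $m$ cannot be unmatched; hence $\gamma(m)=(y^1,y^2)\in D(d_Y)$ with $\|m-\gamma(m)\|_\infty<\delta$, and then $y^2-y^1>(m^2-m^1)-2\delta\ge s-2\delta=s/3>0$, so $\gamma(m)$ is an off-diagonal point of $D(d_Y)$. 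Since $\gamma$ is injective, the number of such $m$ is at most the (finite) number of off-diagonal points of $D(d_Y)$, and the truncated diagram is finite.

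The argument is short once the right objects are in place; the only points that need care are the bookkeeping of essential classes (points at infinity) in the bottleneck matching, and the implicit standing fact that $d_S$ for $S$ compact is tame enough that $D(d_S)$ is a well-defined persistence diagram to which Lemma~\ref{lem:pers_diag_stability} applies — both are standard for distance functions to compact sets. I do not expect a genuine obstacle beyond getting these conventions stated cleanly.
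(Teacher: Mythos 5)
Your proof is correct, but it takes a genuinely different route from the paper's. The paper argues intrinsically: it uses the generalized-gradient/critical-point theory of distance functions together with the Isotopy Lemma to confine $D(d_S)$ to the square $[0,2\,\mathrm{diam}(S)]^2$, invokes $q$-tameness of the persistence module of $d_S$ to get finiteness of $D(d_S)$ in every quadrant $Q_{(b_0,d_0)}$, and then covers the truncated region by finitely many such quadrants. You instead reduce to the finite case: approximate $S$ by a finite $(s/4)$-net $Y$, transfer via the stability theorem (Lemma~\ref{lem:pers_diag_stability}), observe that $D(d_Y)$ is a finite diagram, and use injectivity of the bottleneck matching plus the fact that high-persistence points cannot be matched to the diagonal. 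Your route has the advantage of being quantitative (the truncated diagram has at most as many points as the off-diagonal part of the \v{C}ech diagram of an $(s/4)$-net) and of avoiding critical-point theory; it is also more careful than the paper about essential classes. The trade-off, which you correctly flag, is that your argument still rests on the $q$-tameness of the persistence module of $d_S$ — both to make $D(d_S)$ a well-defined diagram and to have a stability theorem that applies to it — and in the $q$-tame setting that stability theorem is itself a substantially deeper result than the direct "finite multiplicity in each quadrant" property that the paper exploits. So the paper's proof uses less machinery on top of $q$-tameness, while yours trades the Isotopy Lemma for (algebraic) stability and gains an explicit bound.
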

The lemma follows from standard arguments in geometric inference and persistent homology theory. 
  
First, the definition of generalized gradient of $d_S$ - see \cite{chazal2009sampling} or \cite[Section 9.2]{boissonnat2018geometric}  - implies that the critical points of $d_S$ are all contained in the convex hull of $S$. As a consequence, they are all contained in the sublevel set $d_S^{-1}([0, 2{\rm diam}(S)])$. It follows from the Isotopy Lemma - \cite[Theorem 9.5]{boissonnat2018geometric}  - that all the sublevel sets $d_S^{-1}([0, t])$, $t > 2{\rm diam}(S)$ have the same homology.  As a consequence, no point in $D$ has a larger coordinate than $2{\rm diam}(S)$ and $D$ is contained in $[0,2{\rm diam}(S)]^2$. 
             
Since $S$ is compact, the persistence module of the filtration defined by the sublevel sets of $d_S$ is $q$-tame (\cite[Corollary 3.35]{Chazal2016}). Equivalently, this means that for any $b_0 < d_0$, the intersection of $D$ with the quadrant $Q_{(b_0,d_0)} = \{ (b,d): b<b_0 \ {\rm and} \  d_0 < d \}$ is finite. 
Noting that the intersection of $[0,2{\rm diam}(S)]^2$ with the half-plane $\{ (b,d) : d \geq b + s \}$ can be covered by a finite union of quadrants $Q_{(b,b+\frac{s}{2})}$ concludes the proof of the lemma.

\end{document}